\documentclass[11pt,reqno,twoside]{amsart}
\usepackage[T1]{fontenc}
\usepackage[utf8]{inputenc}
\usepackage{lmodern,microtype}
\usepackage[a4paper,margin=30mm]{geometry}
\usepackage{amsmath,amssymb,amsthm,mathtools,mathrsfs,bm}
\usepackage{comment}
\usepackage{xcolor,enumitem,array}
\definecolor{cellblue}{RGB}{231,240,252}
\usepackage{tikz}
\usetikzlibrary{arrows.meta}
\usepackage[colorlinks=true,linkcolor=blue!55!black,citecolor=blue!55!black,urlcolor=blue!55!black]{hyperref}
\hypersetup{
pdftitle={The Shubin--Vakilian--Wolff Uncertainty Principle at Half Density},
pdfauthor={Ming Wang and Yunlei Wang}
}
\allowdisplaybreaks

  \numberwithin{equation}{section}
 \newtheorem{theorem}{Theorem}[section]
 \newtheorem{proposition}[theorem]{Proposition}
 \newtheorem{lemma}[theorem]{Lemma}
 \newtheorem{corollary}[theorem]{Corollary}
 
 \newtheorem*{svwtheorem}{SVW theorem}
 \newtheorem*{svwquestion}{SVW density question}
 \theoremstyle{definition}
 
\theoremstyle{remark}

 \def\R {\mathbb{R}}
 \def\C {\mathbb{C}}
 \def\N {\mathbb{N}}
\def\Z {\mathbb{Z}}

 \def\T {\mathbb{T}}
 \def\one {\mathbf{1}}
 \def\cF {\mathcal{F}}

 \def\Ran {\operatorname{Ran}}

 \def\spec {\operatorname{spec}}
 \def\norm#1{\left\lVert #1\right\rVert}
 
 \def\ip#1#2{\left\langle #1,#2\right\rangle}
 \def\d {\,\mathrm{d}}
 \def\eps {\varepsilon}
 \def\e {\mathrm{e}}
 \def\i {\mathrm{i}}

\def\1{\mathbf{1}}
\def\cW {\mathcal{W}}
\def\supp {\operatorname{supp}}
\def\red {\operatorname{red}}
\def\AW {\operatorname{Op}^{\mathrm{AW}}}

\def\Re {\operatorname{Re}}
\def\Im {\operatorname{Im}}

\def\Ho {\mathcal{H}_{\mathrm{o}}}

 \title[Shubin--Vakilian--Wolff Uncertainty Principle]
   {The Shubin--Vakilian--Wolff Uncertainty Principle\\ at Half Density}
 \author[Ming Wang]{Ming Wang}
 \address[Ming Wang]{School of Mathematics and Statistics, Central South University, Changsha, Hunan 410083, P.R. China}
 \email{m.wang@csu.edu.cn}

\author[Yunlei Wang]{Yunlei Wang}
\address[Yunlei Wang]{Department of Mathematics, Louisiana State University, Baton Rouge, LA 70803, USA}
\email{ywang30@lsu.edu}
\subjclass[2020]{42A38, 47A30,  46L54}
\keywords{Fourier uncertainty principle, Wolff thin sets, critical density,
quadratic Fourier multipliers, odd Gaussian functions, anti-Wick operators}

\begin{document}
\begin{abstract}
Shubin, Vakilian, and Wolff proved a Fourier uncertainty principle for
sets of sufficiently small local density at the reciprocal scale and asked whether every density below one is admissible. We answer this question negatively in dimension one by constructing sequences of pairs of $1/2$-density thin sets and unit
vectors whose total position and Fourier mass outside these sets tends to zero. This obstruction
persists for every scaled reciprocal profile $\rho_\kappa(x)=\min\{1,\kappa/|x|\}$, $\kappa>0$. On the other hand, for $0<\kappa\le1$, the uncertainty estimate holds whenever
$$
\eps<\frac{1}{2(1+32\kappa)}.
$$
Thus the critical density tends to $1/2$ as $\kappa\downarrow0$. The obstruction
uses odd Gaussian packets to transfer norm bounds from a free-group model. The positive estimate uses a Fourier-complementary
 anti-Wick operator and quadratic straightening of the reciprocal geometry.
\end{abstract}
\maketitle
\setcounter{tocdepth}{2}
\tableofcontents

\section{Introduction and main results}\label{intro}

 \subsection{The Shubin--Vakilian--Wolff uncertainty principle}

A central quantitative question in Fourier uncertainty is whether a function and its Fourier transform
 can both be essentially concentrated on geometrically sparse sets. Classical finite-measure forms include the Amrein--Berthier strong-annihilating-pair
 inequality, Benedicks' qualitative support theorem, Nazarov's quantitative estimate, and Jaming's higher-dimensional extension
\cite{AmreinBerthier77,Benedicks85,Jaming07,Nazarov93}. At a fixed observation scale, the Logvinenko--Sereda theorem instead
controls band-limited functions from thick observation sets \cite{LogvinenkoSereda74}, with quantitative refinements due to Kovrijkine \cite{Kovrijkine01}. Shubin, Vakilian, and Wolff (SVW)
 introduced a different geometric regime, in which sparsity is measured locally at a position-dependent scale.

For $d\ge1$, we use the Fourier convention
\begin{equation}\label{Fourier}
\mathcal{F}_d f(\xi) = \int_{\R^d} \e^{-2\pi i x\cdot\xi}f(x)\d x .
\end{equation}
 When $d=1$, we write $\mathcal{F}=\mathcal{F}_1$ and $\widehat f=\mathcal{F}f$.

Let
$$
\rho(x)=\min\{1,|x|^{-1}\}, \qquad \rho(0)=1.
$$
A measurable set $E\subset\R^d$ is called $\eps$-Wolff thin\footnote{We refer to this notion as \emph{Wolff thinness}; the original paper \cite{SVW98} simply
uses the term $\eps$-thin, while Kovrijkine \cite{Kovrizhkin03} refers to the
corresponding uncertainty principle as Wolff's version.} if
$$
|E\cap B(x,\rho(x))| \le \eps |B(x,\rho(x))|, \qquad x\in\R^d.
$$
  Thus sparsity is tested at unit scale near the origin and at reciprocal scale in the tail.

  Motivated by Anderson--Bernoulli models, SVW proved the following theorem \cite{SVW98}.

\begin{svwtheorem}
For every $d\ge1$, there exist constants $\eps_{\mathrm{SVW}}(d)>0$ and $C_d>0$ such that, whenever $E,F\subset\R^d$ are $\eps_{\mathrm{SVW}}(d)$-Wolff thin,
\begin{equation}\label{SVW}
 \|f\|_{L^2(\R^d)} \le C_d\left( \|\mathbf{1}_{E^c}f\|_{L^2(\R^d)} + \|\mathbf{1}_{F^c}\mathcal{F}_df\|_{L^2(\R^d)} \right)
\end{equation}
for every $f\in L^2(\R^d)$.
 \end{svwtheorem}

In dimension one, the reciprocal decay rate of the testing radius is
itself sharp: SVW showed that \eqref{SVW} can fail for a positive
continuous testing radius $\widetilde\rho$ satisfying
$$
 |x|\widetilde\rho(x)\longrightarrow\infty \qquad\text{as }|x|\longrightarrow\infty .
$$
Thus $\widetilde\rho(x)$ is asymptotically larger than the reciprocal scale $|x|^{-1}$. This sharpness concerns the decay rate, not the admissible density: for the reciprocal radius $\rho$, the SVW theorem still assumes a sufficiently small
dimension-dependent density. SVW asked whether this smallness restriction could be removed \cite[p.~940]{SVW98}.

\begin{svwquestion}
For every $d\ge1$ and every $0<\eps<1$, does an estimate of the form \eqref{SVW} hold, with a constant $C_{d,\eps}<\infty$ depending only on $d$ and $\eps$, for every pair of $\eps$-Wolff thin sets $E,F\subset\R^d$?
\end{svwquestion}

 Several subsequent works have extended the geometry or the applications of the SVW theorem. Kovrijkine \cite{Kovrizhkin03} treated distinct position and frequency testing radii, including the complementary power scales $\min\{1,|x|^{-a}\}$ and $\min\{1,|\xi|^{-1/a}\}$ for $a>0$. For continuous nonincreasing radial profiles, he identified a compatibility condition that is necessary and sufficient for an uncertainty principle at sufficiently small density.

Ghobber and Jaming established a Fourier--Bessel analogue of the SVW theorem \cite{GhobberJaming11}. They subsequently showed, for both the Fourier--Bessel and Euclidean Fourier transforms, that bounded cores may be adjoined to sufficiently
thin tails, with the admissible thinness depending on the core sizes \cite[Corollary~3.5 and Remark~3.6]{GhobberJaming13}. Samuelsen \cite{Samuelsen25} proved compactness of Fourier
concentration operators when the local Wolff densities of both sets tend to zero at infinity. More recently, Le~Balc'h and Yu \cite{LeBalcYu26} used the SVW theorem and Kovrizhkin's extension to obtain observability inequalities for the
Schr\"odinger equation from sets thick with respect to decaying densities. These results do not determine the largest fixed density for which the SVW estimate holds uniformly.

\subsection{Main results}

To study the dependence on the reciprocal scale, let
$$
 \rho_\kappa(x)=\min\{1,\kappa/|x|\},\qquad \rho_\kappa(0)=1,\qquad \kappa>0,
    $$
and write $I_x^{(\kappa)}=(x-\rho_\kappa(x),x+\rho_\kappa(x))$. A measurable set $E\subset\R$ is $(\eps,\kappa)$-Wolff thin if
$$
|E\cap I_x^{(\kappa)}|\le2\eps\rho_\kappa(x), \qquad x\in\R.
$$
The case $\kappa=1$ is the original SVW geometry.

For measurable $E,F\subset\R$, let
$$
P_E=\mathbf{1}_E, \qquad Q_F=\mathcal{F}^{-1}P_F\mathcal{F}.
$$
The geometry of two orthogonal projections shows that an uncertainty estimate of the form \eqref{SVW} holds with a finite constant if and only if
$$
\|P_EQ_F\|_{2\to2}<1.
$$

For $0<\eps_1,\eps_2<1$, define
\begin{equation}\label{theta-k}
\Theta_{\kappa}(\eps_1,\eps_2) = \sup_{\substack{ E\text{ is }(\eps_1,\kappa)\text{-Wolff thin}\\
F\text{ is }(\eps_2,\kappa)\text{-Wolff thin} }} \|P_EQ_F\|_{2\to2},
\end{equation}
and write
$$
\Theta_{\kappa}(\eps) = \Theta_{\kappa}(\eps,\eps).
$$
The corresponding critical density is
\begin{equation}\label{equ-intro-3}
   \eps_c(\kappa) := \sup\{\eps\in(0,1):\Theta_\kappa(\eps)<1\}.
\end{equation}
Since $\Theta_\kappa$ is nondecreasing in each density parameter, $\eps_c(\kappa)$ is the endpoint of the uniform uncertainty range.

For $a,b\in[0,1]$, define
\begin{equation}\label{equ-intro-Phi}
\Phi(a,b) =
\begin{cases}
\sqrt{a(1-b)}+\sqrt{b(1-a)}, & a+b\le1,\\
1,  & a+b\ge1.
\end{cases}
\end{equation}

Our first theorem gives a negative answer to the original SVW question and describes the obstruction side of the half-density transition.

\begin{theorem}[SVW counterexample and half-density obstruction]\label{thm-main}
Let $\kappa>0$. For every $0<\eps_1,\eps_2<1$, we have
\begin{equation}\label{equ-main-1}
\Theta_{\kappa}(\eps_1,\eps_2) \ge \Phi(\eps_1,\eps_2).
\end{equation}
Consequently,
\begin{equation}\label{trace-line}
\Theta_{\kappa}(\eps_1,\eps_2)=1 \qquad \text{whenever }\eps_1+\eps_2\ge1,
\end{equation}
and hence
$$
\eps_c(\kappa)\le\frac12.
   $$
\end{theorem}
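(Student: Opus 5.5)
The plan is to realize, asymptotically, the pair $(P_E,Q_F)$ as a pair of \emph{free} projections of traces $\eps_1,\eps_2$ in some tracial von Neumann algebra. For such a pair one has the classical free-probability formula
$$
\|pq\|^2=\max\operatorname{spec}(pqp)=\Bigl(\sqrt{\eps_1(1-\eps_2)}+\sqrt{\eps_2(1-\eps_1)}\Bigr)^2 \qquad\text{when }\eps_1+\eps_2\le1 ,
$$
and $\|pq\|=1$ when $\eps_1+\eps_2\ge1$. This is exactly $\Phi(\eps_1,\eps_2)^2$ in both cases. Once \eqref{equ-main-1} is established, \eqref{trace-line} is immediate, since $\Theta_\kappa\le1$ trivially. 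Taking $\eps_1=\eps_2=\eps\ge1/2$ then gives $\Theta_\kappa(\eps)=1$, so $\eps_c(\kappa)\le1/2$ by \eqref{equ-intro-3} and the monotonicity of $\Theta_\kappa$. It also suffices to treat rational $\eps_i=k/n$, $l/m$, because $\Phi$ is continuous and $\Theta_\kappa$ is monotone.

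\textbf{Step 1: the free model.} Take the group von Neumann algebra of $\Z_n*\Z_m$. Let $p$ be the sum of $k$ spectral projections of the generator of $\Z_n$, and $q$ the sum of $l$ spectral projections of the generator of $\Z_m$. Then $p$ and $q$ are free, with $\tau(p)=\eps_1$ and $\tau(q)=\eps_2$. Hence the moments $\tau((pqp)^N)$ are the free moments, and $\|pq\|=\Phi(\eps_1,\eps_2)$. The norm is recovered as $\lim_N\tau((pqp)^N)^{1/2N}$, or from a single well-chosen unit vector in the GNS space, namely a finitely supported approximate top eigenvector.

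\textbf{Step 2: the geometric realization.} The sets are placed far out, at $|x|\sim R\to\infty$ in position and $|\xi|\sim R$ in frequency, where the reciprocal windows have size $\kappa/R$. There I take $E$ to be locally periodic with period $h\ll\kappa/R$, keeping $k$ of every $n$ subcells. Then $|E\cap I_x^{(\kappa)}|\le2\eps_1\rho_\kappa(x)$ up to boundary effects, which are absorbed by trimming a cell. $F$ is built similarly, with pattern $l$ out of $m$. Expanding in Fourier modes, $P_E$ becomes a function of a modulation acting like $\Z_n$, and $Q_F$ a function of a translation acting like $\Z_m$.

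The test vectors are odd Gaussian wave packets localized at phase-space points in this region; oddness is used to kill the low-order, diagonal contributions. I then compute $\langle (P_EQ_FP_E)^N v,v\rangle$ by expanding into words in the two generators. A reduced word moves the packet to a phase-space point, and the key claim is that this point is distinct from the start unless the word is trivial in $\Z_n*\Z_m$. The point of letting the local periods vary slowly with position (chirp, quadratic straightening) is to destroy the Heisenberg commutation relation $TM=\e^{i\theta}MT$. Without that, the model would collapse to an abelian-by-phase group rather than a free one. With it, the Gaussian overlaps of nontrivial reduced words tend to $0$ as $R\to\infty$ for each fixed $N$. Consequently the moments converge to the free moments, and the Rayleigh quotients of the transferred approximate eigenvector give $\liminf\|P_EQ_F\|\ge\Phi(\eps_1,\eps_2)$.

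\textbf{Main obstacle.} The hard step is Step 2's asymptotic freeness: proving that every nontrivial reduced word of bounded length has vanishing matrix coefficient between the chosen odd Gaussian packets, uniformly. This requires a careful choice of the position-dependent periods so that the Heisenberg phases cannot make a nontrivial word act as a phase multiple of the identity. I would first attempt a quadratic frequency drift in the periods, and verify non-return of the phase-space orbit by a direct stationary-phase and Gaussian-overlap estimate.
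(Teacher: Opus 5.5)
\textbf{There is a genuine gap: the step you call the main obstacle is the core of the proof, and your proposal leaves it open.}

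Your outer framework is sound and matches the paper's architecture. Free projections of traces $\eps_1,\eps_2$ satisfy $\norm{pq}=\Phi(\eps_1,\eps_2)$. The lower bound for $\norm{P_EQ_F}$ should come from transferring this free model to $L^2(\R)$ through Gaussian packets whose reduced-word images become asymptotically orthonormal. Your deduction of \eqref{trace-line} and $\eps_c(\kappa)\le1/2$ from \eqref{equ-main-1} is correct.

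\textbf{The missing mechanism.} You never fix the two unitaries. A ``quadratic frequency drift'' checked by ``stationary phase'' does not show that a nontrivial reduced word cannot act as a phase multiple of the identity. What you need is to make the sets spectral sets of \emph{metaplectic} operators:
\begin{itemize}
\item For $E_{m,A}=\{x: mx^2/2 \bmod 1\in A\}$ one has exactly $P_{E_{m,A}}=\one_A(U_m)$ with $U_m=\e^{\pi\i mX^2}$, and $Q_{E_{m,B}}=\one_B(V_m)$ with $V_m=\cF^{-1}U_m\cF$.
\item These are global sets, with $\kappa$-Wolff density within $4(m\min\{\kappa,1\})^{-1/2}$ of $|A|$. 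Nothing needs to be ``placed far out''.
\item Any word in $U_m^{\pm1},V_m^{\pm1}$ sends $x\e^{\pi\i zx^2}$ to a unimodular multiple of $x\e^{\pi\i\gamma(z)x^2}$. Here $\gamma$ is the M\"obius map of the corresponding word in $A_m=\bigl(\begin{smallmatrix}1&m\\0&1\end{smallmatrix}\bigr)$ and $B_m=\bigl(\begin{smallmatrix}1&0\\-m&1\end{smallmatrix}\bigr)$.
\item For $m\ge2$ these matrices generate a free subgroup of $SL_2(\Z)$ by ping-pong. That is precisely the non-return statement your Heisenberg worry requires.
\item Take $z_t=\xi+\i t$ with $\xi$ outside the countable set of real poles and pairwise collision points of the word maps. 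The explicit overlap formula then gives off-diagonal Gram entries $O(t^{3/2})$.
\end{itemize}
The packets stay centered at the origin; words move the Gaussian parameter in $\mathbb H$, not a phase-space center. Oddness plays no role in the orthogonality, which comes from the separation of the values $\gamma_w(\xi)$.

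\textbf{Two further steps fail as written.}

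First, the $\Z_n*\Z_m$ model does not match the physical side. The order-$n$ unitary whose spectral projections sum to $P_E$ is a step function of the chirp unitary $U$. It contains all harmonics $U^j$ with $j\equiv a\pmod n$, so a reduced word in those generators does not send a packet to a single packet. The words that act cleanly are words in $U^{\pm1},V^{\pm1}$. The natural model is therefore $\Z*\Z$ with free Haar unitaries $\mathbf X,\mathbf Y$. This gives the same joint law for $(p,q)$, so your norm formula survives.

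Second, even in that model, $P_E=\one_A(U)$ is a discontinuous function of $U$. Its Fourier expansion does not converge in operator norm. So expanding $\ip{(P_EQ_FP_E)^Nv}{v}$ into words and degenerating the packet parameter term by term is not justified. The paper handles this as follows:
\begin{itemize}
\item Insert continuous $\varphi,\psi$ with $\one_{A_0}\le\varphi\le\one_A$ and $\one_{B_0}\le\psi\le\one_B$, for compact subarcs $A_0,B_0$.
\item Transfer $\norm{\varphi(\mathbf X)\psi(\mathbf Y)}\le\norm{\varphi(U_m)\psi(V_m)|_{\Ho}}$ by uniform trigonometric approximation.
\item Use $\norm{\varphi(\mathbf X)\psi(\mathbf Y)}\ge\norm{\one_{A_0}(\mathbf X)\one_{B_0}(\mathbf Y)}=\Phi(|A_0|,|B_0|)$.
\item Use the factorization $\varphi(U_m)\psi(V_m)=\varphi(U_m)\one_A(U_m)\one_B(V_m)\psi(V_m)$ to bound the left side by $\norm{\one_A(U_m)\one_B(V_m)|_{\Ho}}$.
\end{itemize}
Your moment-convergence step needs an argument of this kind.
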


At $\kappa=1$ and $\eps_1=\eps_2=1/2$, the construction gives symmetric $1/2$-Wolff thin sets $E_n,F_n$ and odd unit vectors $f_n\in L^2(\R)$ such that
$$
\|\one_{E_n^c}f_n\|_2+ \|\one_{F_n^c}\widehat f_n\|_2\longrightarrow0.
$$
Thus \eqref{SVW} fails uniformly over admissible pairs at every density
$\eps\ge1/2$. Below the line $\eps_1+\eps_2=1$, the theorem gives a lower bound for $\Theta_\kappa$, rather than its exact value. The next theorem provides a positive estimate for arbitrary measurable sets.

    \begin{theorem}[Half-density limit]\label{lowerbound9-10}
    For every $0<\kappa\le1$, we have
    \begin{equation}\label{equ-intro-5}
    \frac1{2(1+32\kappa)} \le \eps_c(\kappa) \le \frac12.
\end{equation}
    Consequently,
    \begin{equation}\label{equ-intro-5.5}
    \lim_{\kappa\downarrow0}\eps_c(\kappa) = \frac12.
    \end{equation}
    \end{theorem}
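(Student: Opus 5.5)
The upper bound $\eps_c(\kappa)\le 1/2$ is already contained in Theorem~\ref{thm-main}, and \eqref{equ-intro-5.5} follows from \eqref{equ-intro-5} by squeezing, since $1/(2(1+32\kappa))\to1/2$. So everything rests on the lower bound: for $0<\kappa\le1$ and $\eps<\frac{1}{2(1+32\kappa)}$, show $\Theta_\kappa(\eps)<1$. Equivalently, find $\delta>0$ with $\|P_EQ_F f\|\le(1-\delta)\|f\|$ for all $(\eps,\kappa)$-Wolff thin $E,F$. The strategy is to dominate $P_E$ and $Q_F$ by a single bounded positive operator $A$ with two properties:
\begin{itemize}
\item $\|A\|\le 2\eps(1+32\kappa)<1$;
\item $\langle P_E f,f\rangle+\langle Q_F f,f\rangle\le \langle A f,f\rangle+\|f\|^2$, or a variant giving $\|P_EQ_F\|^2\le \|A\|$-type control.
\end{itemize}
The mechanism is the elementary fact that for two projections, $\|P+Q\|=1+\|PQ\|$. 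Hence it suffices to prove $\|P_E+Q_F\|\le 1+2\eps(1+32\kappa)$, since $2\eps(1+32\kappa)<1$.

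The natural operator for this is an anti-Wick (Gabor–Toeplitz) operator $\AW(a)=\int a(x,\xi)\,\langle\cdot,\phi_{x,\xi}\rangle\phi_{x,\xi}\,\d x\,\d\xi$, built from Gaussian wave packets $\phi_{x,\xi}$ whose position width is adapted to the reciprocal geometry. The plan has three steps.

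\begin{enumerate}
\item \emph{Straighten the geometry.} Change variables quadratically, $x\mapsto \operatorname{sgn}(x)x^2/(2\kappa)$, in position and likewise in frequency. A reciprocal-scale interval $(x-\kappa/|x|,\,x+\kappa/|x|)$ then becomes a unit-length interval. Correspondingly, conjugate by a quadratic (chirp-type) Fourier multiplier so that packets localized at scale $\kappa/|x|$ near $x$ become packets of fixed width.
\item \emph{Fourier-complementary symbol.} Choose the symbol $a=a_E+a_F$. Take $a_E(x,\xi)$ to be the local density $|E\cap I^{(\kappa)}_x|/(2\rho_\kappa(x))\le\eps$, applied only on the region $|x|\gtrsim|\xi|$, and $a_F$ the analogous density for $F$ on the complementary region $|\xi|\gtrsim|x|$. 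Then $\|\AW(a)\|\le\sup a\le\eps$ on each piece. I would then show
\[
P_E\le \AW(\tilde a_E)+R_E,
\]
where the errors $R_E,R_F$ come from the packet tails and from curvature of the reciprocal scale across a packet. A factor like $(1+32\kappa)$ should arise from the packet overlap constant, through the Schur test with Gaussian kernels, multiplied by the density; the extra factor $2$ comes from combining the $E$ and $F$ pieces.
\item \emph{Assemble.} Combine the pieces to get $\|P_E+Q_F\|\le 1+2\eps(1+32\kappa)$, and conclude via the projection identity.
\end{enumerate}

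The main obstacle is step 2: controlling $P_E$ by an anti-Wick operator whose symbol is the \emph{local density}. This requires $E$ to be tested at scale $\rho_\kappa$ in the region where the packet is sharply localized in $x$, while $Q_F$ is tested in the complementary frequency-dominated region, with errors uniformly of size $O(\kappa\eps)$. I would first try to handle the crossover region $|x|\approx|\xi|$, where neither localization is sharp, by a partition of unity in $(x,\xi)$. The crossover region would then be estimated using the smallness of $\kappa$: there the quadratic chirp has curvature $\sim 1/\kappa$, so the packets separate the two tests.
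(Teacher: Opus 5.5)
Your outer reductions are fine: the upper bound comes from Theorem~\ref{thm-main}, and the limit follows by squeezing. The core mechanism, however, cannot work as stated.

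By $\|P+Q\|=1+\|PQ\|$, your target $\|P_E+Q_F\|\le 1+2\eps(1+32\kappa)$ is equivalent to $\|P_EQ_F\|\le 2\eps(1+32\kappa)$ uniformly over $(\eps,\kappa)$-Wolff thin pairs. But Theorem~\ref{thm-main} gives $\Theta_\kappa(\eps)\ge\Phi(\eps,\eps)=2\sqrt{\eps(1-\eps)}$ for every $\kappa>0$. This exceeds $2\eps(1+32\kappa)$ whenever $\eps<1/(1+(1+32\kappa)^2)$, e.g.\ $\kappa=1$, $\eps=10^{-4}$. So no operator $A$ with the properties you list can exist. Your variant $\|P_EQ_F\|^2\le\|A\|$, with the same bound on $\|A\|$, is likewise contradicted whenever $\kappa<1/32$ and $\eps<(1-32\kappa)/2$. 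The true dependence on the density is of square-root type. Any domination of $P_E+Q_F$ by $I$ plus an anti-Wick operator whose symbol is a local density, hence of size $O(\eps)$, is mis-scaled.

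Step~2 is also not an actual argument. If $\tilde a_E$ is the density symbol, then $P_E\le\AW(\tilde a_E)+R_E$ with small $R_E$ is impossible, because $\langle P_Ef,f\rangle=\|f\|^2$ for $f$ supported in $E$. If instead $\tilde a_E$ equals $1$ on the frequency-dominated region, the inequality says that functions supported on $E$ carry little phase-space mass where $|x|\gtrsim|\xi|$. That is the entire content of the theorem, and you give no mechanism for it; the factor $1+32\kappa$ is read off from the statement rather than derived.

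The paper's argument supplies three ingredients you are missing.
\begin{enumerate}
\item[(a)] A single positive contraction $R=\AW(\phi_0(\xi/x))$ with $\phi_0(r)+\phi_0(1/r)=1$. This gives the exact identity $\cF R\cF^{-1}=I-R$, so your crossover region never appears; no partition of unity or crossover error terms are needed.
\item[(b)] A compression bound $P_ERP_E\le A P_E$ with $A<1/2$, proved by Schur's test on the explicit nonnegative kernel $K_{\rm cell}$. Here $t=\pi x^2$ is a change of variables in the row integrals, not a chirp conjugation. It turns Wolff thinness into a fixed-cell density bound with cells of length $4\pi\kappa$. A rearrangement lemma then controls the unimodal and decreasing parts of each row.
\item[(c)] The Cauchy--Schwarz angle Lemma~\ref{positive-angle}. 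It converts $P_ERP_E\le AP_E$ and $Q_F(I-R)Q_F\le AQ_F$ into $\|P_EQ_F\|\le 2\sqrt{A(1-A)}<1$.
\end{enumerate}
Step~(c) is what produces the correct square-root dependence. The threshold $\eps<1/(2(1+32\kappa))$ is precisely the condition that keeps the row sums $\int_E K_{\rm cell}(y,z)\d z$ below $1/2$.
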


Thus every fixed density $\eps<1/2$ lies in the positive uncertainty range for all sufficiently small $\kappa$. Together with Theorem~\ref{thm-main}, this shows that the positive range approaches the same half-density threshold at which the obstruction persists. At the original scale, \eqref{equ-intro-5} gives the explicit bounds
$$
\frac1{66} \le \eps_c(1) \le \frac12.
$$
Thus, in dimension one the original SVW estimate holds uniformly throughout $0<\eps<1/66$:
$$
\|f\|_2 \le C\bigl( \|\mathbf 1_{E^c}f\|_2 + \|\mathbf 1_{F^c}\widehat f\|_2 \bigr),
$$
for all $\eps$-Wolff thin measurable sets $E,F\subset\R$, with $C$ universal. Whether $\eps_c(1)=1/2$ remains open.

In this article, we only focus on dimension $d=1$ for simplicity. All arguments can be extended smoothly to $d\ge 1$, so that the half-density obstruction still exists and the half-density limit $\lim_{\kappa\downarrow 0}\varepsilon_c(\kappa)= 1/2$ still holds. Both main theorems, together with their extensions to every $d\ge1$, have been formally verified in Lean~4 using mathlib, see \cite{SVWLean}.

\subsection{Ideas of the proof}

The obstruction and the positive result are proved by rather different mechanisms. On the obstruction side, the main difficulty is to construct sets that are sparse in the Wolff geometry but still support almost complete concentration in both position and frequency. On the positive side, one needs a mechanism that converts local density control into a uniform angle between the position and frequency subspaces.

 \medskip
 \noindent
 \textit{Proof of Theorem~\ref{thm-main}: main steps.} The proof is based on a quadratic family whose geometry is simple enough to control, while its position--frequency interaction is structured enough to admit sharp lower bounds.

\begin{enumerate}

\item[(1)] \textit{Quadratic periodic sets.} For $m\ge2$ and measurable $A\subset\T=\R/\Z$, define
$$
E_{m,A} := \left\{ x\in\R: \frac{mx^2}{2}\pmod 1\in A \right\}.
$$
At every fixed reciprocal scale $\kappa>0$, the Wolff density of $E_{m,A}$ converges to the phase density $|A|$ as $m\to\infty$. Thus, up to an asymptotically negligible error, prescribing the Wolff density reduces to prescribing the
measure of a subset of the torus. The main analytic problem is then to obtain a lower bound for
$$
\bigl\|P_{E_{m,A}}Q_{E_{m,B}}\bigr\|_{2\to2}.
$$

\item[(2)] \textit{Noncommuting quadratic spectral projections.} Introduce the quadratic multiplier
$$
U_m f(x)=e^{\pi i m x^2}f(x)
$$
and its Fourier conjugate
  $$
V_m=\mathcal F^{-1}U_m\mathcal F.
$$
By functional calculus,
$$
   P_{E_{m,A}}=\mathbf 1_A(U_m), \qquad Q_{E_{m,B}}=\mathbf 1_B(V_m).
$$
Both operators preserve the odd subspace
$$
\mathcal  H_{\mathrm o} = \{f\in L^2(\R):f(-x)=-f(x)\},
$$
and it is enough for our purpose to obtain the desired lower bound there. Thus the
original position--frequency problem is converted into a problem about two simple but noncommuting quadratic operators.

  \item[(3)] \textit{Odd Gaussians and M\"obius dynamics.} Consider the odd Gaussian packets
 $$
\psi_z(x)\sim x e^{\pi i z x^2}, \qquad  z\in\mathbb H.
$$
They are simultaneously adapted to the two quadratic operators: up to scalar factors,
$$
U_m\psi_z\sim\psi_{z+m}, \qquad V_m\psi_z\sim\psi_{z/(1-mz)}.
$$
Hence arbitrary alternating products of $U_m$ and $V_m$ can be tracked through compositions of the two explicit M\"obius maps
$$
z\mapsto z+m, \qquad z\mapsto \frac{z}{1-mz}.
$$

For $m\ge2$, a ping--pong argument shows that distinct reduced words give distinct M\"obius transformations. Moreover, the Gaussian packets have an explicit overlap formula. By moving their parameters toward a generic point of the real boundary, packets corresponding to distinct words become asymptotically orthogonal. The complicated noncommutative interaction of $U_m$ and $V_m$ can therefore be compared with a model in which different reduced words are exactly orthogonal. This comparison transfers lower bounds from that model back to the quadratic Fourier problem.

\item[(4)] \textit{Free projections and a one-dimensional reduction.} In the auxiliary model, the two spectral cutoffs become a pair of projections $p$ and $q$ depending only on
$$
\alpha=|A|, \qquad \beta=|B|.
$$
After centering these projections, alternating products associated with different reduced words are orthogonal. Word length therefore becomes the relevant one-dimensional variable, and the norm problem for $pq$ reduces to the spectral analysis
of an explicit half-line Jacobi operator.

This calculation gives
$$
\|pq\|_{2\to2}=\Phi(\alpha,\beta).
$$
In particular, the model exhibits the sharp transition $\alpha+\beta=1$; on the symmetric line $\alpha=\beta$, the critical density is $1/2$.

  \item[(5)] \textit{Returning to the quadratic Fourier problem.} To pass from the auxiliary model back to the original spectral
  projections, we insert continuous cutoffs between slightly smaller and slightly larger arcs. The Gaussian comparison from step~(3), together with the projection calculation from step~(4), then yields
$$
  \bigl\| P_{E_{m,A}}Q_{E_{m,B}} \big|_{\mathcal H_{\mathrm o}} \bigr\|_{2\to2} \ge \Phi(|A_0|,|B_0|)
  $$
  for compact subarcs $A_0\Subset A$ and $B_0\Subset B$.

 Finally, choose the outer arc lengths slightly below $\eps_1,\eps_2$, take $m$ sufficiently large so that the quadratic sets satisfy the required Wolff-density constraints, and then let the
 inner and outer arc lengths approach the prescribed densities. This gives
 $$
 \Theta_\kappa(\eps_1,\eps_2) \ge \Phi(\eps_1,\eps_2),
$$
 and proves Theorem~\ref{thm-main}.
 \end{enumerate}

\medskip
 \noindent
\textit{Proof of Theorem~\ref{lowerbound9-10}: main steps.} The positive result uses a mechanism quite different from the obstruction above. The main idea is to construct a positive auxiliary operator whose Fourier conjugate is its complement, and then use the Wolff density condition to separate quantitatively the position and frequency subspaces.

\begin{enumerate}

 \item[(1)] \textit{A positive anti-Wick separator.} We construct a positive anti-Wick contraction $R$ with nonnegative kernel
 and the exact Fourier-complementarity relation
 \begin{equation}
 \mathcal F R\mathcal F^{-1}=I-R. \label{eq}
 \end{equation}
 Thus the same positive operator can be used on both sides of the position--frequency problem: one seeks
 to show that $R$ is small on functions localized to a Wolff-thin spatial set, while $I-R$ is small on
 functions whose Fourier transforms are localized to a Wolff-thin frequency set. The positivity of the kernel makes these estimates accessible directly from geometric density information.

\item[(2)] \textit{Straightening the Wolff geometry and applying Schur's test.} On
 each half-line, the quadratic change of variables
$$
t=\pi x^2
$$
 turns the reciprocal spatial scale in the Wolff condition into intervals of the fixed length $4\pi\kappa$ in the $t$-variable. The Wolff-thinness assumption is therefore converted into a uniform weighted density bound at one fixed scale.

This change of variables is adapted to the quadratic structure of the kernel of $R$. Applying the resulting fixed-scale density estimate to its kernel rows gives uniform row and column bounds on
Wolff-thin sets. Schur's test then yields quantitative estimates for
$$
P_E R P_E \qquad\text{and}\qquad Q_F(I-R)Q_F.
$$

    \item[(3)] \textit{Energy separation and Fourier uncertainty.} By \eqref{eq}, the two compression estimates from step~(2) give complementary $R$-energy bounds on the position and frequency subspaces. A positive-contraction angle lemma then yields
$$
 \|P_EQ_F\|_{2\to2}<1.
  $$
  Keeping track of the constants gives
  $$
  \eps_c(\kappa) \ge \frac{1}{2(1+32\kappa)}, \qquad 0<\kappa\le1.
  $$
  Together with $\eps_c(\kappa)\le1/2$, this implies
  $$
  \lim_{\kappa\downarrow0}\eps_c(\kappa)=\frac12.
 $$
  \end{enumerate}

\medskip

We use the inner product
$$
  \ip{\phi}{\psi} = \int_{\R}\phi(x)\,\overline{\psi(x)}\d x
 $$
on $L^2(\R)$, and write $\norm{\phi}_2$ for the corresponding $L^2(\R)$ norm. Unless otherwise indicated, operator norms are taken on the Hilbert spaces on which the operators act.

  \section{Quadratic sets and the free model}
  \label{section2}

 \subsection{Quadratic sets and Wolff density}
 \label{quadratic-sets}

  Fix $\kappa>0$. For a measurable set $E\subset\R$, define its $\kappa$-Wolff density by
  \begin{equation}\label{wolff-density}
\delta_{\mathrm{W},\kappa}(E) := \sup_{x\in\R} \frac{|E\cap I_x^{(\kappa)}|}{|I_x^{(\kappa)}|} = \sup_{x\in\R} \frac{|E\cap I_x^{(\kappa)}|}{2\rho_\kappa(x)}.
  \end{equation}
   Thus $0\le\delta_{\mathrm{W},\kappa}(E)\le1$, and $E$ is $(\eps,\kappa)$-Wolff thin if and only if
$$
  \delta_{\mathrm{W},\kappa}(E)\le\eps.
$$
In particular, $\delta_{\mathrm{W},\kappa}(E)$ is the least density for which $E$ is $(\eps,\kappa)$-Wolff thin.

The Wolff density also controls the upper asymptotic density on the positive half-line.

\begin{lemma}\label{lem-density}
For every measurable $E\subset\R$,
\begin{equation}\label{equ-density-1}
\limsup_{R\to\infty} \frac{|E\cap[0,R]|}{R} \le \delta_{\mathrm{W},\kappa}(E).
\end{equation}
\end{lemma}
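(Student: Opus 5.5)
The plan is to cover $[0,R]$, up to a negligible initial segment, by a family of disjoint Wolff intervals $I_x^{(\kappa)}$. Summing the defining inequality $|E\cap I_x^{(\kappa)}|\le \delta\,|I_x^{(\kappa)}|$, with $\delta=\delta_{\mathrm{W},\kappa}(E)$, over this family will then give
$$
|E\cap[0,R]|\le \delta R+O(1)
$$
with an error independent of $R$. Dividing by $R$ and letting $R\to\infty$ yields \eqref{equ-density-1}. Since the intervals must be disjoint and lie inside $[0,R]$ (or overshoot by only a bounded amount), the point is to choose them carefully.

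The construction is greedy from the left. Fix $x_0\ge \max\{1,\kappa\}$, so that $\rho_\kappa(x)=\kappa/x\le 1$ for $x\ge x_0$. Define left endpoints $a_0=x_0$ and, recursively, choose the centre $x_j$ of the $j$-th interval so that its left endpoint is $a_j$. That is, solve
$$
x_j-\kappa/x_j=a_j
$$
for $x_j$, and set $a_{j+1}=x_j+\kappa/x_j$. The map $x\mapsto x-\kappa/x$ is continuous and increasing on $(0,\infty)$, so this equation has a unique solution. Then the intervals $I_{x_j}^{(\kappa)}=(a_j,a_{j+1})$ tile $[a_0,\infty)$ up to endpoints.

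The next step is to check that $a_j\to\infty$. The lengths $2\kappa/x_j$ are comparable to $2\kappa/a_j$, since $x_j\in(a_j,a_j+\kappa/a_j)$. Hence $a_{j+1}^2-a_j^2\approx 4\kappa$, so $a_j\sim 2\sqrt{\kappa j}$, and in particular the tiling reaches every $R$.

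For given $R$, let $J$ be the largest index with $a_{J}\le R$. Then
$$
|E\cap[0,R]|\le x_0+\sum_{j<J}|E\cap I_{x_j}^{(\kappa)}|+|I_{x_J}^{(\kappa)}|\le x_0+\delta(a_J-a_0)+2\kappa/x_J\le x_0+\delta R+2\kappa/x_0 .
$$
The main obstacle is only this bookkeeping: verifying termination ($a_j\to\infty$) and bounding the partial last interval. Both are immediate because $\rho_\kappa\le1$ on $[x_0,\infty)$, so the leftover piece has length at most $2$. If $\delta=1$ the statement is trivial anyway.
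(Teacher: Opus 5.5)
Your proof is correct and is essentially the paper's argument: the paper tiles the tail by the same disjoint Wolff intervals, given explicitly as $I^{(\kappa)}_{x_n}=(2\sqrt{\kappa n},2\sqrt{\kappa(n+1)})$ with centres $x_n=\sqrt\kappa(\sqrt n+\sqrt{n+1})$, and sums the density bound just as you do. Your recursion in fact satisfies $a_{j+1}^2-a_j^2=(x_j+\kappa/x_j)^2-(x_j-\kappa/x_j)^2=4\kappa$ exactly, so $a_j=\sqrt{x_0^2+4\kappa j}$ and your ``$\approx$'' can be replaced by an identity.
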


\begin{proof}
Put $\delta=\delta_{\mathrm{W},\kappa}(E)$ and
$$
x_n^{(\kappa)} = \sqrt\kappa\,(\sqrt n+\sqrt{n+1}), \qquad n\ge0.
 $$
Choose $N_\kappa$ so large that $x_n^{(\kappa)}\ge\kappa$ for any $n\ge N_\kappa$. Then
$$
I_{x_n^{(\kappa)}}^{(\kappa)} = (2\sqrt{\kappa n},2\sqrt{\kappa(n+1)}),
$$
    so these intervals tile the tail $(2\sqrt{\kappa N_\kappa},\infty)$.
Hence, for every integer $N\ge N_\kappa$,
$$
|E\cap(2\sqrt{\kappa N_\kappa},2\sqrt{\kappa N})| \le 2\delta\sqrt\kappa\, (\sqrt N-\sqrt{N_\kappa}).
$$
Now let $R\ge 2\sqrt{\kappa N_\kappa}$, and choose the unique integer $N\ge N_\kappa$ such that
$$
2\sqrt{\kappa N}\le R<2\sqrt{\kappa(N+1)},
$$
then, with $C_\kappa=|E\cap[0,2\sqrt{\kappa N_\kappa}]|$,
$$
\frac{|E\cap[0,R]|}{R}  \le \frac{C_\kappa}{R}   + \delta\left( \sqrt{\frac{N+1}{N}} - \sqrt{\frac{N_\kappa}{N}} \right).
$$
Letting $R$, and hence $N$, tend to infinity proves \eqref{equ-density-1}.
\end{proof}

For a measurable set $A\subset\T=\R/\Z$ and an integer $m\ge2$, define
$$
E_{m,A} = \left\{ x\in\R: \frac{mx^2}{2}\pmod 1\in A \right\}.
$$
 The following estimate is the only $\kappa$-dependent input needed for the obstruction results below.

\begin{proposition}	\label{prop-disc}
Let $A\subset\T$ be measurable, let $\alpha=|A|$, and let $m\ge2$. Then
\begin{equation}\label{disc-kappa}
\sup_{x\in\R} \left| \frac{|E_{m,A}\cap I_x^{(\kappa)}|} {|I_x^{(\kappa)}|} -\alpha \right| \le \Delta_{m,\kappa}, \quad\text{with }\, \Delta_{m,\kappa} :=    \frac4{\sqrt{m\min\{\kappa,1\}}}.
   \end{equation}
\end{proposition}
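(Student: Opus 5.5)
The plan is to straighten the interval by the quadratic change of variables $u=mx^2/2$ and count full periods of the phase. By symmetry $E_{m,A}=-E_{m,A}$, so it suffices to treat one interval $I=I_x^{(\kappa)}=(a,b)$. We split according to whether $I$ contains the origin.

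\emph{Case $0\notin I$.} Say $0\le a<b$. On $(a,b)$ the map $x\mapsto u=mx^2/2$ is a diffeomorphism onto $(u_a,u_b)$, and
$$
|E_{m,A}\cap I|=\int_{u_a}^{u_b}\one_A(u \bmod 1)\,\frac{\d u}{\sqrt{2mu}} .
$$
We cut $(u_a,u_b)$ into $\lfloor u_b-u_a\rfloor$ unit periods plus one partial period. On a unit period $[k,k+1]$ the weight $w(u)=(2mu)^{-1/2}$ is decreasing. The integral of $\one_A w$ therefore differs from $\alpha\int_k^{k+1}w$ by at most $w(k)-w(k+1)$, since $\one_A-\alpha$ has mean zero and $w$ oscillates by at most that amount. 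These differences telescope to at most $w(u_a)=1/(ma)$, i.e.\ at most $1/(\sqrt{2m}\sqrt{u_a})$ in the $u$-form. The leftover partial period contributes at most $\int w$ over a unit interval, which is at most $\sqrt{2/m}$ in $x$-length, with a symmetric bound for the $\alpha$ term. The total error in $|E\cap I|-\alpha|I|$ is then $O(\min\{1/(ma),|I|\})+O(m^{-1/2})$. A cleaner way to organize this is to note that one period in $u$ corresponds to an $x$-interval of length at most $\min\{1/(ma),\sqrt{2/m}\}$, so the error is at most about twice the maximal length of a period-cell meeting $I$.

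Dividing by $|I|=2\rho_\kappa(x)$ leaves two regimes. If $|x|\ge\kappa$, then $\rho=\kappa/|x|$ and $a\approx|x|$, so $\frac{1/(ma)}{\kappa/|x|}\lesssim 1/(m\kappa)$. Combining with the cell-length bound $\sqrt{2/m}$ and using $\min\{1/(ma),\sqrt{2/m}\}/|I|$ gives $O(1/\sqrt{m\kappa})$: when $a\gtrsim\sqrt{m}\,\kappa/\sqrt m$ the first bound wins, and otherwise $|x|\lesssim\sqrt{\kappa}$ roughly, so $|I|\gtrsim\sqrt\kappa$ and $\sqrt{2/m}/|I|\lesssim 1/\sqrt{m\kappa}$. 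If $|x|<\kappa$, then $|I|=2$ and the error is $O(m^{-1/2})$. Taking $\min\{\kappa,1\}$ handles both regimes.

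\emph{Case $0\in I$.} We split $I$ at $0$ into two half-intervals, map each by $u=mx^2/2$ starting from $u=0$, and apply the same period count. Each half contributes an error at most one initial cell of length $\sqrt{2/m}$ plus the partial tail cell. Dividing by $|I|$, which is $2$ if $|x|<\kappa$ and otherwise at least about $2\kappa/|x|\ge$ a multiple of $\sqrt\kappa$ (since $0\in I$ forces $|x|^2<\kappa$), again gives $O(1/\sqrt{m\min\{\kappa,1\}})$.

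The main obstacle is the bookkeeping of constants needed to reach exactly $4/\sqrt{m\min\{\kappa,1\}}$. I would first prove the unified cell-error bound $\bigl||E\cap I|-\alpha|I|\bigr|\le 2\sup(\text{cell length meeting }I)$, with cell length at most $\sqrt{2/m}$ near $0$ and $1/(m\,\mathrm{dist}(I,0))$ away from it. I would then check the worst case $|x|\sim\sqrt\kappa$ directly.
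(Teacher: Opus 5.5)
Your argument is essentially the paper's. The period-by-period telescoping in $u=mx^2/2$ is a discrete form of the paper's integration by parts against the bounded periodic primitive $H$. It gives the same two estimates, namely an $O(m^{-1/2})$ error for every interval and the error $2/(ma)$ for $I=(a,b)$ with $a>0$, and it uses the same case split at $|x|=\kappa$ and $|x|\sim\sqrt{\kappa}$.

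The constant you flag does close. Your cell bound holds in the form $\bigl|\,|E_{m,A}\cap I|-\alpha|I|\,\bigr|\le 2\min\{\sqrt{2/m},\,1/(ma)\}$ for $0\le a<b$. Split the range $x>\kappa$, $0\notin I$, at $x=2\sqrt{2\kappa}$, and note that the claim is trivial when $m\kappa<1$; that remark is needed to pass from $1/(m\kappa)$ to $1/\sqrt{m\kappa}$. This gives exactly $4/\sqrt{m\min\{\kappa,1\}}$.
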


 \begin{proof}
Extend $g(t)=\one_A(t)-\alpha$ periodically to $\R$, and put
$$
    H(t)=\int_0^t g(s)\d s.
$$
Then $H$ is one-periodic and $\norm  H_{L^\infty(\R)}\le1$. Define
$$
J(R)=\int_0^R g(u^2)\d u.
$$
The integrand is even, and hence $J$ is odd. For $0\le R\le1$, the bound $|J(R)|\le 1$ is immediate. If $R\ge1$, integration by parts gives
$$
\int_1^R g(u^2)\d u = \left[\frac{H(u^2)}{2u}\right]_1^R + \int_1^R\frac{H(u^2)}{2u^2}\d u,
$$
    so
  $$
|J(R)|\le2,\qquad R\in\R.
 $$
With $c=m/2$, every interval $(a,b)$ therefore satisfies
\begin{equation}\label{equ-disc-3}
\left| \int_a^b g(cy^2)\d y \right| \le \frac4{\sqrt c}.
\end{equation}

By symmetry it is enough to take $x\ge0$. If $x\le\kappa$, then $|I_x^{(\kappa)}|=2$, and \eqref{equ-disc-3} gives
$$
\left| \frac{|E_{m,A}\cap I_x^{(\kappa)}|} {|I_x^{(\kappa)}|} -\alpha \right| =
\frac12 \left| \int_{I_x^{(\kappa)}} g(cy^2)\d y \right| \le \frac12\frac4{\sqrt c} = 2\sqrt{\frac2m} \le \Delta_{m,\kappa}.
$$
If $\kappa<x\le\sqrt{2\kappa}$, a range which is empty when $\kappa\ge2$, then $|I_x^{(\kappa)}|=2\kappa/x$, and
$$
\left| \frac{|E_{m,A}\cap I_x^{(\kappa)}|} {|I_x^{(\kappa)}|} -\alpha \right| \le \frac{2x}{\kappa\sqrt c} \le   \frac4{\sqrt{m\kappa}} \le \Delta_{m,\kappa}.
$$

It remains to consider $x>\kappa$ and $x\ge\sqrt{2\kappa}$. Put
 $$
a=x-\frac\kappa x, \qquad b=x+\frac\kappa x.
$$
Then $a>0$ and $|I_x^{(\kappa)}|=2\kappa/x$. Since $g(cy^2) = \frac1{2cy}\frac{\mathrm{d}}{\mathrm{d}y}H(cy^2),$ integration by parts gives
$$
\left| \int_a^b g(cy^2)\d y \right| \le \frac1{ca}.
$$
Hence
$$
\left| \frac{|E_{m,A}\cap I_x^{(\kappa)}|} {|I_x^{(\kappa)}|} -\alpha \right| \le \frac{x^2}{2\kappa c(x^2-\kappa)} \le \frac2{m\kappa}.
 $$
   If $0<\kappa\le1$ and $m\kappa\ge1$, this is at most $2/\sqrt{m\kappa}$; if $m\kappa<1$, the desired estimate is trivial. If $\kappa\ge1$, then
$$
\frac2{m\kappa} \le \frac4{\sqrt m} = \Delta_{m,\kappa}.
$$
This proves \eqref{disc-kappa}.
\end{proof}

The discrepancy estimate determines the Wolff density up to the same error.

  \begin{corollary}\label{cor-quadratic}
For every measurable $A\subset\T$ and every $m\ge2$,
\begin{equation}\label{quadratic-thin}
|A| \le \delta_{\mathrm{W},\kappa}(E_{m,A}) \le  |A|+\frac4{\sqrt{m\min\{\kappa,1\}}}.
\end{equation}
\end{corollary}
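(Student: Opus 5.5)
The upper bound in \eqref{quadratic-thin} follows directly from Proposition~\ref{prop-disc}. By definition \eqref{wolff-density}, $\delta_{\mathrm{W},\kappa}(E_{m,A})$ is the supremum over $x$ of $|E_{m,A}\cap I_x^{(\kappa)}|/|I_x^{(\kappa)}|$. By \eqref{disc-kappa}, each such ratio is at most $\alpha+\Delta_{m,\kappa}$. Taking the supremum gives
$$
\delta_{\mathrm{W},\kappa}(E_{m,A})\le |A|+\frac4{\sqrt{m\min\{\kappa,1\}}}.
$$

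For the lower bound I will not use the discrepancy estimate, since it only gives $\delta_{\mathrm{W},\kappa}(E_{m,A})\ge |A|-\Delta_{m,\kappa}$. Instead I will use Lemma~\ref{lem-density}, which says that the Wolff density dominates the upper asymptotic density on the positive half-line. It therefore suffices to show that
$$
\lim_{R\to\infty}\frac{|E_{m,A}\cap[0,R]|}{R}=|A|.
$$
This holds because, with $g=\one_A-\alpha$ extended periodically and $c=m/2$, we have
$$
|E_{m,A}\cap[0,R]|-\alpha R=\int_0^R g(cy^2)\d y,
$$
and the bound \eqref{equ-disc-3} from the proof of Proposition~\ref{prop-disc} makes this integral $O(1/\sqrt c)$ uniformly in $R$. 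Dividing by $R$ and letting $R\to\infty$ gives the limit $\alpha$. Lemma~\ref{lem-density} then yields $|A|\le\delta_{\mathrm{W},\kappa}(E_{m,A})$.

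There is no real obstacle here. The only point needing care is noticing that the sharp lower bound, with no $m$-dependent error, must come from the asymptotic-density comparison in Lemma~\ref{lem-density} rather than from \eqref{disc-kappa}. We also need \eqref{equ-disc-3} for the intervals $(0,R)$, which it covers since it holds for every interval $(a,b)$.
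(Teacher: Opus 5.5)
Your proposal is correct and follows the paper's proof essentially verbatim. The upper bound is read off from \eqref{disc-kappa}. For the lower bound, the paper writes $|E_{m,A}\cap[0,R]|-\alpha R=c^{-1/2}J(\sqrt c\,R)$, which is the same as your use of \eqref{equ-disc-3} on $(0,R)$; this gives asymptotic density $|A|$, and Lemma~\ref{lem-density} then yields $|A|\le\delta_{\mathrm{W},\kappa}(E_{m,A})$.
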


\begin{proof}
The upper bound follows immediately from \eqref{disc-kappa}. For the converse, the same function $J$ used above gives
$$
|E_{m,A}\cap[0,R]|-\alpha R = c^{-1/2}J(\sqrt c\,R),
$$
  and hence
$$
\lim_{R\to\infty} \frac{|E_{m,A}\cap[0,R]|}{R} = \alpha.
$$
Lemma~\ref{lem-density} therefore gives $\alpha\le\delta_{\mathrm{W},\kappa}(E_{m,A})$.
 \end{proof}

\subsection{Odd Gaussian packets and M\"obius dynamics}

We first introduce the quadratic unitaries
$$
U_m=\e^{\pi\i mX^2},\qquad V_m=\cF^{-1}U_m\cF=\e^{\pi\i mD^2},
$$
where $Xf(x)=xf(x)$ and $D=(2\pi\i)^{-1}\partial_x$. Identifying $\T$ with the unit circle, we have, by functional calculus,
$$
P_{E_{m,A}}=\one_A(U_m),\qquad Q_{E_{m,B}}=\one_B(V_m).
$$
The operators $U_m$ and $V_m$ preserve the odd subspace
 $$
\Ho=\{f\in L^2(\R):f(-x)=-f(x)\}.
$$
It is therefore sufficient to establish the required norm lower bounds on $\Ho$.

    Let $\mathbb H=\{z\in\C:\Im z>0\}$. For $z\in\mathbb H$, define
    $$
    \psi_z(x)=2^{5/4}\sqrt\pi\,(\Im z)^{3/4}x\e^{\pi\i zx^2}.
    $$
    The following lemma gives the covariance and overlap formulas for these packets. For their metaplectic interpretation, we refer to \cite{Folland89}.

\begin{lemma}\label{lem-gaussian}
 Each $\psi_z$ belongs to $\Ho$ and has norm one. The subspace $\Ho$ is invariant under $U_m$ and $V_m$. For $z,w\in\mathbb{H}$,
 $$
\bigl|\ip{\psi_z}{\psi_w}\bigr| = \left( \frac{2\sqrt{\Im z\,\Im w}}{|z-\overline w|} \right)^{3/2}.
$$
Moreover, up to unimodular scalar factors,
$$
 U_m\psi_z\sim\psi_{a_m(z)}, \qquad V_m\psi_z\sim\psi_{b_m(z)}, \qquad \cF\psi_z\sim\psi_{-1/z},
$$
where
$$
a_m(z)=z+m, \qquad b_m(z)=\frac{z}{1-mz}.
$$
  \end{lemma}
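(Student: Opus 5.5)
The lemma is a collection of direct Gaussian computations, and I would carry them out one at a time.

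\emph{Oddness, normalization, invariance.} Oddness of $\psi_z$ is immediate from the factor $x$. For the norm, write $y=\Im z$. Then $|\psi_z(x)|^2=2^{5/2}\pi y^{3/2}x^2\e^{-2\pi yx^2}$. Using $\int x^2\e^{-2\pi yx^2}\d x=\tfrac{1}{4\pi y}(2y)^{-1/2}$, the total integral is $1$. The subspace $\Ho$ is invariant under $U_m$ because $x^2$ is even. It is invariant under $\cF$, since the Fourier transform commutes with the reflection $x\mapsto -x$. Hence $V_m=\cF^{-1}U_m\cF$ preserves $\Ho$ as well.

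\emph{Overlap formula.} We have
$$
\ip{\psi_z}{\psi_w}=c_zc_w\int x^2\e^{\pi\i(z-\overline w)x^2}\d x,
$$
with $c_z=2^{5/4}\sqrt\pi(\Im z)^{3/4}$. Put $\zeta=z-\overline w$, which lies in $\mathbb H$ since $\Im\zeta=\Im z+\Im w>0$. By analytic continuation of the real Gaussian moment,
$$
\int x^2\e^{\pi\i\zeta x^2}\d x=\frac{1}{2\pi}(-\i\zeta)^{-3/2}
$$
with the principal branch. Taking absolute values gives
$$
|\ip{\psi_z}{\psi_w}|=\frac{2^{5/2}\pi(\Im z\Im w)^{3/4}}{2\pi|\zeta|^{3/2}}=\Bigl(\frac{2\sqrt{\Im z\Im w}}{|z-\overline w|}\Bigr)^{3/2}.
$$
This is the stated formula; as a check, it equals $1$ when $w=z$.

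\emph{Covariance.} For $U_m$, the multiplier gives exactly $\e^{\pi\i mx^2}\psi_z=\psi_{z+m}$, because $\Im(z+m)=\Im z$; the scalar factor is $1$. For $\cF$, differentiate the Gaussian transform $\cF[\e^{\pi\i zx^2}](\xi)=(-\i z)^{-1/2}\e^{-\pi\i\xi^2/z}$ in $\xi$, using $\cF[xg]=-\tfrac{1}{2\pi\i}(\cF g)'$. This yields
$$
\cF[x\e^{\pi\i zx^2}](\xi)=C(z)\,\xi\,\e^{\pi\i(-1/z)\xi^2}
$$
with $C(z)$ a multiple of $(-\i z)^{-3/2}$. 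So $\cF\psi_z$ is a scalar multiple of $\psi_{-1/z}$. Since $\cF$ is unitary and both packets have norm one, that scalar is unimodular, and no modulus needs to be tracked. The same argument gives $\cF^{-1}\psi_z\sim\psi_{-1/z}$, because $-1/(-1/z)=z$ and $\cF^{-1}$ equals $\cF$ composed with reflection, which acts on odd functions as $-1$.

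Then
$$
V_m\psi_z\sim\cF^{-1}U_m\psi_{-1/z}=\cF^{-1}\psi_{m-1/z}\sim\psi_{-1/(m-1/z)}=\psi_{z/(1-mz)}=\psi_{b_m(z)}.
$$
Each step is unitary between unit vectors, so the accumulated constants are unimodular.

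\emph{Main obstacle.} The only delicate point is choosing the branch of $(-\i\zeta)^{-3/2}$ consistently in the Gaussian integrals. Unimodularity, however, sidesteps any phase bookkeeping, so the proof stays routine.
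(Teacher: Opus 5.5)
Your proposal is correct and follows essentially the same route as the paper. You use parity for invariance, the complex Gaussian moment $\int_{\R} x^2\e^{-ax^2}\d x=\sqrt\pi/(2a^{3/2})$ with $\Re a>0$ for the norm and the overlap, the exact shift for $U_m$, the Fourier transform of $x\e^{\pi\i zx^2}$ for $\cF$, and conjugation for $V_m$. Differentiating the Gaussian transform instead of completing the square, and using unitarity to see that the constants are unimodular, are just more explicit versions of the paper's computation.
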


\begin{proof}
The multiplier $\e^{\pi\i mx^2}$ is even, so $U_m$ preserves parity. The Fourier transform preserves the even and odd subspaces, and hence so does $V_m=\cF^{-1}U_m\cF$.

The normalization and overlap formula follow from
$$
\int_{\R}x^2\e^{-ax^2}\d x = \frac{\sqrt\pi}{2a^{3/2}}, \qquad \Re a>0.
$$
Indeed,
 $$
|\psi_z(x)|^2 = 2^{5/2}\pi(\Im z)^{3/2}x^2 \e^{-2\pi(\Im z)x^2},
$$
so the preceding Gaussian integral gives $\|\psi_z\|_2=1$. For the overlap, combine
the exponential factors into
$$
\exp\!\left(\pi\i(z-\overline  w)x^2\right)
$$
and apply the same Gaussian integral with a complex parameter of positive real part. Taking absolute values gives the displayed formula.

The identity for $U_m$ is immediate, since multiplication by $\e^{\pi\i mx^2}$ replaces $z$ by $z+m$. Completing the square in the Fourier transform of $x\e^{\pi\i zx^2}$ gives
$$
\cF\psi_z\sim\psi_{-1/z}.
$$
The formula for $V_m$ then follows from $V_m=\cF^{-1}U_m\cF$.
\end{proof}

Lemma~\ref{lem-gaussian} shows that the two quadratic operators act on the Gaussian parameter through the fractional-linear maps
$$
a_m(z)=z+m, \qquad b_m(z)=\frac{z}{1-mz}.
$$
Thus compositions of $U_m^{\pm1}$ and $V_m^{\pm1}$ can be studied
through the corresponding compositions of $a_m^{\pm1}$ and $b_m^{\pm1}$. The latter admit a convenient matrix description. Namely,
$$
A_m=
\begin{pmatrix}
1&m\\
0&1
\end{pmatrix},
\qquad B_m=
\begin{pmatrix}
1&0\\
-m&1
\end{pmatrix}
$$
represent $a_m$ and $b_m$, respectively. Since $m\in\N$, both matrices belong to $SL_2(\Z)$, the group of $2\times2$ matrices with integer entries and determinant one. The same is true of their inverses, and composition of the corresponding M\"obius transformations is represented by matrix multiplication.

\begin{lemma}\label{word-action}
Let $\gamma$ be a finite composition of $a_m^{\pm1}$ and
$b_m^{\pm1}$, and write its matrix representative as
\begin{equation}\label{equ-word-1}
 M_\gamma=
\begin{pmatrix}
a&b\\
c&d
\end{pmatrix}
\in SL_2(\Z).
\end{equation}
Then
\begin{equation}\label{mobius-form}
\gamma(z)=\frac{az+b}{cz+d}, \qquad z\in\mathbb{H},
\end{equation}
and
\begin{equation}\label{Im-mobius}
\Im\gamma(z) = \frac{\Im z}{|cz+d|^2}.
 \end{equation}
In particular, $\gamma(\mathbb{H})\subset\mathbb{H}$.

If $W_\gamma$ denotes the corresponding finite composition of
$U_m^{\pm1}$ and $V_m^{\pm1}$, then, up to a unimodular scalar,
\begin{equation}\label{equ-word-3}
W_\gamma\psi_z \sim \psi_{\gamma(z)}, \qquad z\in\mathbb{H}.
\end{equation}
 \end{lemma}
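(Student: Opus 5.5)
The argument is an induction on the length of the composition $\gamma$; every step reduces to the generator computations already recorded in Lemma~\ref{lem-gaussian}.

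\textbf{Matrix action.} First we check that the assignment $M\mapsto(z\mapsto (az+b)/(cz+d))$ is a homomorphism from $SL_2(\Z)$ into the group of M\"obius maps. This is the standard fact that composing fractional linear maps corresponds to multiplying their matrices. The generators satisfy
$$
A_m\cdot z=z+m=a_m(z), \qquad B_m\cdot z=\frac{z}{-mz+1}=b_m(z),
$$
and their inverses $A_{-m}$, $B_{-m}$ represent $a_m^{-1}$ and $b_m^{-1}$. So if $\gamma=\gamma_1\circ\cdots\circ\gamma_k$ with each $\gamma_j\in\{a_m^{\pm1},b_m^{\pm1}\}$, its matrix representative is $M_\gamma=M_{\gamma_1}\cdots M_{\gamma_k}$, and this gives \eqref{mobius-form}. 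The denominator $cz+d$ does not vanish on $\mathbb H$: if $c\neq0$ then $\Im(cz+d)=c\,\Im z\ne0$, and if $c=0$ then $d\neq0$ because $ad=1$. For the imaginary part we compute
$$
\frac{(az+b)\overline{(cz+d)}}{|cz+d|^2}.
$$
The imaginary part of the numerator is $(ad-bc)\Im z=\Im z$, since the cross terms $ac|z|^2$ and $bd$ are real. This proves \eqref{Im-mobius}, and $\gamma(\mathbb H)\subset\mathbb H$ follows at once. As a consequence, each composition maps $\mathbb H$ into $\mathbb H$, so the packets $\psi_{\gamma(z)}$ appearing below are well defined at every stage.

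\textbf{Operator action.} Next we prove \eqref{equ-word-3} by induction on $k$. For $k=1$ with a positive power, this is exactly Lemma~\ref{lem-gaussian}. For the inverses, apply that lemma at the point $a_m^{-1}(z)$ (respectively $b_m^{-1}(z)$), which lies in $\mathbb H$:
$$
U_m\psi_{a_m^{-1}(z)}=\lambda\,\psi_z, \qquad |\lambda|=1.
$$
Hence $U_m^{-1}\psi_z=\bar\lambda\,\psi_{a_m^{-1}(z)}$, and the same argument handles $V_m^{-1}$. We also need $b_m^{-1}(z)=z/(1+mz)$ to lie in $\mathbb H$, which holds by the first part. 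For the inductive step, write $W_\gamma=W_{\gamma_1}W_{\gamma'}$, where $\gamma'$ has length $k-1$. Then
$$
W_\gamma\psi_z=\mu\,W_{\gamma_1}\psi_{\gamma'(z)}=\mu\mu'\,\psi_{\gamma_1(\gamma'(z))},
$$
and a product of unimodular scalars is unimodular. The ordering convention is the only delicate point: the word $W_\gamma$ has to be built with operators composed in the same order as the maps, so that the rightmost factor acts first on both sides. With that convention the induction closes and no real obstacle remains.
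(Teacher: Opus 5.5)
Your proposal is correct and follows the paper's own proof. The matrix homomorphism gives \eqref{mobius-form}, the identity $ad-bc=1$ gives \eqref{Im-mobius}, and iterating the generator covariances of Lemma~\ref{lem-gaussian} gives \eqref{equ-word-3}. You are somewhat more explicit than the paper: you check the nonvanishing of $cz+d$ by cases, and you derive the covariance for $U_m^{-1},V_m^{-1}$ from that for $U_m,V_m$, both of which the paper leaves implicit.
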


\begin{proof}
The matrix description follows from the identities above and the fact that composition of fractional-linear maps corresponds to matrix multiplication. Since $A_m^{\pm1},B_m^{\pm1}\in SL_2(\Z)$, every word matrix has the form \eqref{equ-word-1}.

For $z\in\mathbb{H}$, the denominator $cz+d$ cannot vanish, since $c,d\in\R$ and $ad-bc=1$. Moreover,
\begin{align*}
\gamma(z)-\overline{\gamma(z)} = \frac{az+b}{cz+d} - \frac{a\bar z+b}{c\bar z+d} =
\frac{(ad-bc)(z-\bar z)}{|cz+d|^2} = \frac{z-\bar z}{|cz+d|^2}.
\end{align*}
Dividing by $2\i$ gives \eqref{Im-mobius}, and hence $\gamma(\mathbb{H})\subset\mathbb{H}$.

Finally, Lemma~\ref{lem-gaussian} gives
$$
U_m^{\pm1}\psi_z\sim\psi_{a_m^{\pm1}(z)}, \qquad V_m^{\pm1}\psi_z\sim\psi_{b_m^{\pm1}(z)}.
 $$
Iterating these identities gives \eqref{equ-word-3}.
\end{proof}

We will later approach the real boundary of $\mathbb{H}$. The preceding lemma gives the required behavior immediately. If $\xi\in\R$ is not a pole of $\gamma$, so that $c\xi+d\ne0$, and
$$
z_t=\xi+it,   \qquad t\downarrow0,
$$
then
$$
  \gamma(z_t) \longrightarrow \gamma(\xi) = \frac{a\xi+b}{c\xi+d}\in\R,
$$
while
\begin{equation}\label{equ-boundary-01}
\Im\gamma(z_t) = \frac{t}{|c(\xi+it)+d|^2} = \frac{t}{|c\xi+d|^2}+o(t).
\end{equation}
Thus every nonpolar M\"obius orbit approaching the boundary has imaginary part of order $t$. Together with the overlap formula in Lemma~\ref{lem-gaussian}, this will later turn separation of the real boundary values into asymptotic orthogonality of the corresponding packets.

It remains to verify that distinct compositions of the generators do not collapse to the same M\"obius transformation. This is the point of the ping-pong argument.

  \begin{lemma}\label{lem-ping-pong}
For $m\ge2$, after cancelling adjacent inverse pairs, every nontrivial finite composition of $a_m^{\pm1}$ and $b_m^{\pm1}$ defines a nonidentity M\"obius transformation. Consequently, distinct such compositions define distinct M\"obius transformations.
\end{lemma}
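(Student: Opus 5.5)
We run the classical ping-pong argument on the boundary $\widehat\R=\R\cup\{\infty\}$, where the Möbius maps extend continuously. Set
$$
X=\{x\in\widehat\R:\ |x|>1\}\cup\{\infty\},\qquad Y=\{x\in\R:\ |x|<1\}\setminus\{0\}.
$$
These are disjoint nonempty sets. The first step is the two ping-pong inclusions.

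For $k\ne0$ we have $a_m^k(x)=x+km$. If $x\in Y$, then $|x+km|\ge |k|m-1\ge m-1\ge1$. This inequality is strict unless $m=2$ and $|x|$ approaches $1$, and since $|x|<1$ we get $|x+km|>|k|m-1\ge1$. Hence $a_m^k(Y)\subset X$.

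For the other generator, $b_m^k(x)=x/(1-kmx)$, so $1/b_m^k(x)=1/x-km$. If $x\in X$, then $|1/x|<1$, and the same estimate gives $|1/b_m^k(x)|>1$. This means $b_m^k(x)\in Y$; note that $b_m^k(\infty)=-1/(km)\in Y$ because $m\ge2$. Thus $b_m^k(X)\subset Y$.

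Now take a nontrivial reduced word. We group it into syllables $a_m^{k_1}b_m^{l_1}\cdots$ with all exponents nonzero.

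First suppose the word begins and ends with an $a_m$-syllable, say $\gamma=a_m^{k_1}b_m^{l_1}\cdots b_m^{l_{r-1}}a_m^{k_r}$. Pick a point $x\in Y$. Applying the syllables from the right, they alternate $Y\to X\to Y\to\cdots\to X$, so $\gamma(x)\in X$. Since $X\cap Y=\emptyset$, we get $\gamma(x)\ne x$, and $\gamma$ is not the identity.

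For the remaining shapes we conjugate by a suitable power $a_m^{j}$ with $j\ne0$. This turns the word into one that begins and ends with an $a_m$-syllable, after possibly merging syllables so that the exponents become $k_1+j$ and $k_r+j$. We choose $j$ so that these merged exponents are nonzero; this is possible since only finitely many values of $j$ are excluded. The case of a single $b_m$-syllable is handled directly: $b_m^l\ne\mathrm{id}$, since $b_m^l(\infty)=-1/(lm)\ne\infty$. Conjugates of the identity are the identity, so the conclusion transfers back to the original word.

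The "consequently" part follows by taking $\gamma_1\gamma_2^{-1}$. After cancellation, this word is nontrivial whenever the reduced words $\gamma_1$ and $\gamma_2$ differ, so $\gamma_1\gamma_2^{-1}$ is not the identity and $\gamma_1\ne\gamma_2$.

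The main obstacle is the boundary case $m=2$, where $|x+km|\ge1$ is tight. It is resolved by the strict inequalities built into the definitions: $Y$ excludes $|x|=1$, and $X$ is defined by $|x|>1$ together with $\infty$, so every inclusion above is strict. We must also check that the Möbius maps act as claimed at $\infty$ and at poles. For instance, $a_m^k$ fixes $\infty\in X$, but we only ever apply $a_m^k$ to points of $Y$. Likewise, $b_m^k$ has its pole at $x=1/(km)$, which lies inside $Y$, while we only apply $b_m^k$ to points of $X$. So no application ever hits a pole or leaves the intended set.
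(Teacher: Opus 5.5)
Your proof is correct and follows the paper's argument: the same ping-pong split into the open unit ball and its exterior. The paper works in $\widehat{\C}$, while you restrict to $\widehat{\R}$ and harmlessly delete $0$; you also spell out the conjugation step that the paper leaves to ``the ping-pong argument''. One cosmetic slip: conjugating as $a_m^j\gamma a_m^{-j}$ turns a final syllable $a_m^{k_r}$ into $a_m^{k_r-j}$, not $a_m^{k_r+j}$, and at most one end merges in each case, but excluding finitely many values of $j$ still works.
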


\begin{proof}
In $\widehat{\C}=\C\cup\{\infty\}$, set
$$
\mathcal{X}=\{|z|<1\}, \qquad \mathcal{Y}=\{|z|>1\}\cup\{\infty\}.
$$
For every nonzero integer $k$,
$$
a_m^k(z)=z+km, \qquad b_m^k(z)=\frac{z}{1-kmz}.
 $$
If $z\in\mathcal{X}$, then $|a_m^k(z)| \ge |km|-|z| >1$, so
$$
a_m^k(\mathcal{X})\subset\mathcal{Y} \qquad(k\ne0).
$$
If $z\in\mathcal{Y}\setminus\{\infty\}$, then
$$
|1-kmz| \ge |kmz|-1 >|z|,
$$
and hence $|b_m^k(z)|<1$.	Moreover,
$$
b_m^k(\infty)=-(km)^{-1}\in\mathcal{X}.
$$
Therefore
$$
b_m^k(\mathcal{Y})\subset\mathcal{X} \qquad(k\ne0).
$$

After cancelling adjacent inverse pairs, every nontrivial finite composition can be written as alternating nonzero powers of $a_m$ and $b_m$. The preceding inclusions then imply, by the ping-pong argument, that such a composition cannot be the identity transformation. Consequently, two distinct compositions after cancellation define distinct M\"obius transformations.
\end{proof}

Thus finite compositions of the quadratic operators are encoded by distinct M\"obius transformations. In the next subsection, this dynamics is organized into a Hilbert-space model and combined with the Gaussian overlap formula to obtain asymptotic orthogonality.

\subsection{The reduced-word realization}

Let $\cW$ denote the set of reduced words in $X^{\pm1},Y^{\pm1}$, including the empty word $o$. We write $\red(w)$ for the word
obtained by cancelling adjacent inverse pairs. Let $(\delta_w)_{w\in\cW}$ be the standard orthonormal basis of $\ell^2(\cW)$, and put $\Omega=\delta_o$. Define
$$
 \mathbf X\delta_w=\delta_{\red(Xw)},\qquad \mathbf Y\delta_w=\delta_{\red(Yw)}.
$$
Since left multiplication followed by cancellation permutes $\cW$, the operators $\mathbf X$ and
$\mathbf Y$ are unitary. In the following, norms of operators formed from $\mathbf X$ and $\mathbf Y$ are taken on $\ell^2(\cW)$.

 For $w\in\cW$, let $\gamma_w=w(a_m,b_m)$, and let $w(U_m,V_m)$ denote
 the corresponding operator word. By Lemma~\ref{lem-ping-pong}, distinct reduced words give distinct M\"obius transformations. We next choose one real point at which all these transformations are finite and have pairwise distinct values.

 \begin{lemma}\label{generic-point}
 There exists $\xi\in\R$ such that
 \begin{equation}\label{a-generic}
 \xi\ \text{is not a pole of any }\gamma_w \quad  (w\in\cW),
\end{equation}
 and
 \begin{equation}\label{equ-a-2}
 \gamma_w(\xi)\ne\gamma_v(\xi) \qquad (w\ne v).
 \end{equation}
 \end{lemma}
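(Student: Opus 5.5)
The plan is a countability argument: each of the two conditions \eqref{a-generic} and \eqref{equ-a-2} excludes only countably many real points, so we may choose $\xi$ outside a countable set. Since there are four generators, the set $\cW$ of reduced words is countable, and so is the set of pairs $(w,v)$ with $w\ne v$. By Lemma~\ref{word-action}, each $\gamma_w$ is represented by an integer matrix $M_{\gamma_w}=\begin{pmatrix}a&b\\c&d\end{pmatrix}\in SL_2(\Z)$.

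First consider the poles. If $c\ne0$, then $\gamma_w$ has the single real pole $-d/c$. If $c=0$, then $d=\pm1$, and $\gamma_w$ has no finite pole. Hence the set of all poles $\{-d/c\}$ over $w\in\cW$ is countable. In fact it is contained in $\mathbb{Q}$.

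Next consider coincidences. Fix $w\ne v$. For $\xi$ that is a pole of neither map, the equation $\gamma_w(\xi)=\gamma_v(\xi)$ is equivalent to
$$
\gamma_v^{-1}\gamma_w(\xi)=\xi .
$$
Here $\gamma_v^{-1}\gamma_w$ is the M\"obius map associated with the word $v^{-1}w$. Because $w\ne v$ are reduced, the reduction $\red(v^{-1}w)$ is nonempty. By Lemma~\ref{lem-ping-pong}, $\gamma_v^{-1}\gamma_w$ is therefore not the identity transformation. Its matrix $\begin{pmatrix}a'&b'\\c'&d'\end{pmatrix}$ is thus not $\pm I$, and the fixed-point equation becomes
$$
c'\xi^2+(d'-a')\xi-b'=0 .
$$
This polynomial is not identically zero, since otherwise $c'=b'=0$ and $a'=d'$, which forces $a'=d'=\pm1$. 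A nonzero polynomial has at most two roots, so each pair $(w,v)$ excludes at most two real points.

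Taking the union over countably many words and pairs, the excluded set is countable. Any $\xi$ in its complement, for example any $\xi$ outside a countable set containing $\mathbb{Q}$, satisfies both conditions. The main step is the non-identity claim. The only subtlety there is working projectively, in $PSL_2$: Lemma~\ref{lem-ping-pong} gives a nonidentity transformation, which is exactly the statement that the matrix is not $\pm I$. That is what makes the quadratic polynomial nontrivial.
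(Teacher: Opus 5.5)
Your proof is correct and follows essentially the same countability argument as the paper: the poles form a countable set, and each pair $w\ne v$ contributes at most two collision points, since a nonzero polynomial of degree at most two vanishes there. The only cosmetic difference is how you obtain that polynomial. You write a collision as a fixed point of $\gamma_{\red(v^{-1}w)}$ and check explicitly that its matrix is not $\pm I$, whereas the paper cross-multiplies $\gamma_w(\xi)=\gamma_v(\xi)$ directly and appeals to the distinctness of the two maps.
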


   \begin{proof}
By Lemma~\ref{word-action}, every word map has a determinant-one representative
$$
\gamma_w(z) = \frac{a_wz+b_w}{c_wz+d_w}.
$$
Hence $\gamma_w$ has at most one finite real pole.

  Now fix two distinct reduced words $w\ne v$. By Lemma~\ref{lem-ping-pong}, the maps $\gamma_w$ and $\gamma_v$ are distinct. Away from their poles, a collision
$$
\gamma_w(\xi)=\gamma_v(\xi)
$$
implies
$$
(a_w\xi+b_w)(c_v\xi+d_v) - (a_v\xi+b_v)(c_w\xi+d_w) =0.
$$
The left-hand side is a polynomial of degree at most two, and it cannot vanish identically because the two fractional-linear maps are distinct. Thus each pair of distinct word maps has only
finitely many real collision points.

Since $\cW$ is countable, the union of all poles and pairwise collision points is countable. Any real number outside this exceptional set has the required properties.
\end{proof}

Fix from now on one such $\xi$, and set
$$
z_t=\xi+it, \qquad t\downarrow0.
$$
For $w\in\cW$, define the boundary packet
$$
 \Psi_{w,t} = w(U_m,V_m)\psi_{z_t} \in\Ho.
$$
The next lemma is the analytic input in the transfer argument: on every fixed finite
set of reduced words, these packets become orthonormal as the base point approaches the real axis.

\begin{lemma}\label{lem-gram}
For every finite family of distinct reduced words $w_1,\dots,w_N\in\cW$,
$$
    \left( \left\langle \Psi_{w_i,t}, \Psi_{w_j,t} \right\rangle \right)_{1\le i,j\le N} \longrightarrow I_N \qquad (t\downarrow0).
 $$
More precisely, for $i\ne j$,
$$
\left| \left\langle \Psi_{w_i,t}, \Psi_{w_j,t} \right\rangle \right| = O(t^{3/2}).
$$
\end{lemma}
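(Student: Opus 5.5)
The plan is to reduce everything to the explicit overlap formula of Lemma~\ref{lem-gaussian}, combined with the M\"obius description of Lemma~\ref{word-action} and the boundary choice of Lemma~\ref{generic-point}.

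\emph{Diagonal entries.} By \eqref{equ-word-3}, each $\Psi_{w,t}=w(U_m,V_m)\psi_{z_t}$ equals $\psi_{\gamma_w(z_t)}$ up to a unimodular scalar. Since the $\psi_z$ are unit vectors, every diagonal entry equals $1$ for every $t>0$. Only the off-diagonal entries need attention, and there the unimodular scalars are irrelevant because only absolute values are estimated.

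\emph{Off-diagonal entries.} Fix $i\ne j$ and write $z^{(i)}_t=\gamma_{w_i}(z_t)$. Lemma~\ref{lem-gaussian} gives
$$
\bigl|\ip{\Psi_{w_i,t}}{\Psi_{w_j,t}}\bigr| = \left(\frac{2\sqrt{\Im z^{(i)}_t\,\Im z^{(j)}_t}}{|z^{(i)}_t-\overline{z^{(j)}_t}|}\right)^{3/2}.
$$
\begin{enumerate}
\item[(a)] \emph{Numerator.} By \eqref{a-generic}, $\xi$ is not a pole of $\gamma_{w_i}$ or $\gamma_{w_j}$. The expansion \eqref{equ-boundary-01} then gives $\Im z^{(k)}_t = t/|c_{w_k}\xi+d_{w_k}|^2+o(t)$. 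In fact \eqref{Im-mobius} gives the exact bound $\Im z^{(k)}_t\le t/|c_{w_k}\xi+d_{w_k}|^2\cdot(1+o(1))$, so the numerator is $O(t)$.
\item[(b)] \emph{Denominator.} Each $\gamma_{w_k}$ is continuous at the nonpolar point $\xi$. Hence $z^{(i)}_t\to\gamma_{w_i}(\xi)$ and $\overline{z^{(j)}_t}\to\gamma_{w_j}(\xi)$, both real. By \eqref{equ-a-2} these limits differ, so the denominator tends to $|\gamma_{w_i}(\xi)-\gamma_{w_j}(\xi)|>0$ and is bounded below for small $t$.
\end{enumerate}
Combining (a) and (b), the quotient is $O(t)$, and raising it to the power $3/2$ gives $O(t^{3/2})$. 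Since the family of words is finite, the implied constants are uniform, and the Gram matrix converges to $I_N$ entrywise.

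\emph{Main subtlety.} The only real point is ensuring the limits are distinct real numbers and that neither word has $\xi$ as a pole. Lemma~\ref{generic-point} provides exactly this, so no genuine obstacle is expected.
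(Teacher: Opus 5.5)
Your proposal is correct and matches the paper's proof: the diagonal entries are $1$ by normalization, and each off-diagonal entry is bounded through the overlap formula of Lemma~\ref{lem-gaussian}, with numerator $O(t)$ by \eqref{equ-boundary-01} and denominator bounded below because Lemma~\ref{generic-point} gives distinct real limits. As a minor simplification, \eqref{Im-mobius} gives the exact bound $\Im\gamma_{w_k}(z_t)\le t/(c_{w_k}\xi+d_{w_k})^2$, since $|c(\xi+\i t)+d|^2=(c\xi+d)^2+c^2t^2$, so your $(1+o(1))$ factor is unnecessary.
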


\begin{proof}
  Let $\gamma_j=\gamma_{w_j}$. Since $\xi$ is not a pole of any $\gamma_j$, it follows from \eqref{equ-boundary-01} that
\begin{equation}\label{equ-transfer-01}
\Im\gamma_j(z_t)=O(t),\qquad \gamma_j(z_t)\longrightarrow\gamma_j(\xi)\in\R.
\end{equation}
Moreover, the choice of $\xi$ gives, for $i\ne j$,
\begin{equation}\label{denom-limit}
\bigl|\gamma_i(z_t)-\overline{\gamma_j(z_t)}\bigr| \longrightarrow |\gamma_i(\xi)-\gamma_j(\xi)|>0.
 \end{equation}
 By the covariance and overlap formulas in Lemma~\ref{lem-gaussian}, we have
$$
\bigl|\langle\Psi_{w_i,t},\Psi_{w_j,t}\rangle\bigr| =\left( \frac{2\sqrt{\Im\gamma_i(z_t)\,\Im\gamma_j(z_t)}} {|\gamma_i(z_t)-\overline{\gamma_j(z_t)}|} \right)^{3/2}.
$$
Combining the preceding estimates, we obtain
$$
\bigl|\langle\Psi_{w_i,t},\Psi_{w_j,t}\rangle\bigr| =O(t^{3/2}),\qquad i\ne j.
$$
Since each $\Psi_{w_j,t}$ has norm one, the diagonal entries are one. This proves the convergence of the Gram matrices.
\end{proof}

For $t>0$ and a finitely supported vector
$$
c=\sum_{w\in\cW} c_w\delta_w\in\ell^2(\cW),
$$
define
$$
J_t c = \sum_{w\in\cW} c_w\,\Psi_{w,t} = \sum_{w\in\cW} c_w\,w(U_m,V_m)\psi_{z_t}.
$$
Thus $J_t$ is defined on the linear span of the standard basis
$(\delta_w)_{w\in\cW}$. Lemma~\ref{lem-gram} says precisely that $J_t$ becomes isometric on every fixed finite-dimensional coordinate subspace as $t\downarrow0$. The algebraic intertwining with the reduced-word model, on the other hand, is exact.

\begin{proposition}\label{prop-transfer}
For every finitely supported vector $c\in\ell^2(\cW)$,
$$
\norm{J_t c}_2 \longrightarrow \norm{c}_{\ell^2(\cW)} \qquad (t\downarrow0).
$$
Moreover, for every such $c$,
\begin{equation}\label{intertwining}
U_mJ_t c=J_t\mathbf{X}c, \qquad V_mJ_t c=J_t\mathbf{Y}c.
\end{equation}
Consequently, if $P$ is any finite Laurent polynomial in two noncommuting unitary variables, then
$$
P(U_m,V_m)J_t c = J_tP(\mathbf{X},\mathbf{Y})c
$$
for every finitely supported $c\in\ell^2(\cW)$, and
$$
\norm{P(U_m,V_m)J_t c}_2 \longrightarrow \norm{P(\mathbf{X},\mathbf{Y})c}_{\ell^2(\cW)}.
$$
In particular,
 $$
\norm{P(\mathbf{X},\mathbf{Y})} \le \norm{P(U_m,V_m)|_{\Ho}}.
  $$
\end{proposition}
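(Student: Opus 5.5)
The plan is to get all four assertions from two facts: the Gram-matrix convergence of Lemma~\ref{lem-gram}, and the observation that the intertwining relations are exact operator identities. Since the packets $\Psi_{w,t}$ are defined by applying operator words directly, no phase bookkeeping is needed.

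\emph{Norm convergence.} Fix a finitely supported $c$, and let $w_1,\dots,w_N$ be the distinct reduced words in its support. Then
$$
\norm{J_tc}_2^2=\sum_{i,j}c_{w_i}\overline{c_{w_j}}\,\ip{\Psi_{w_i,t}}{\Psi_{w_j,t}}.
$$
By Lemma~\ref{lem-gram}, this finite sum converges to $\sum_i|c_{w_i}|^2=\norm{c}^2_{\ell^2(\cW)}$ as $t\downarrow0$.

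\emph{Intertwining.} By linearity it suffices to take $c=\delta_w$. For any word $w$, the operator identity $U_m\,w(U_m,V_m)=\red(Xw)(U_m,V_m)$ holds exactly. Indeed, if $w$ begins with $X^{-1}$, cancelling $U_mU_m^{-1}=I$ is a genuine equality of operators; otherwise $Xw$ is already reduced. Applying this identity to $\psi_{z_t}$ gives $U_m\Psi_{w,t}=\Psi_{\red(Xw),t}=J_t\mathbf X\delta_w$. The relation for $V_m$ and $\mathbf Y$ is identical. The same argument, or applying $U_m^{-1}$ and $V_m^{-1}$ to the relations just proved, gives $U_m^{-1}J_t=J_t\mathbf X^{-1}$ and $V_m^{-1}J_t=J_t\mathbf Y^{-1}$ on finitely supported vectors; here we use that $\mathbf X^{\pm1},\mathbf Y^{\pm1}$ preserve finite support. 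Iterating over the letters of a monomial and summing over monomials then gives $P(U_m,V_m)J_tc=J_tP(\mathbf X,\mathbf Y)c$.

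\emph{Consequences.} Since $P(\mathbf X,\mathbf Y)c$ is again finitely supported, the first part applied to it yields
$$
\norm{P(U_m,V_m)J_tc}_2=\norm{J_tP(\mathbf X,\mathbf Y)c}_2\to\norm{P(\mathbf X,\mathbf Y)c}.
$$
Each $\Psi_{w,t}$ lies in $\Ho$ by Lemma~\ref{lem-gaussian}, so $J_tc\in\Ho$. Hence
$$
\norm{P(U_m,V_m)J_tc}_2\le\norm{P(U_m,V_m)|_{\Ho}}\,\norm{J_tc}_2.
$$
Letting $t\downarrow0$ gives $\norm{P(\mathbf X,\mathbf Y)c}\le\norm{P(U_m,V_m)|_{\Ho}}\,\norm c$ for every finitely supported $c$. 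Finitely supported vectors are dense and $P(\mathbf X,\mathbf Y)$ is bounded, so the operator-norm inequality follows.

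The only step needing care is the exactness of the intertwining. This relies on defining $\Psi_{w,t}$ through the operator word rather than through $\psi_{\gamma_w(z_t)}$, which would only be determined up to a unimodular factor. With that choice, the cancellations in $\red$ correspond to true operator cancellations, and no phase issues arise.
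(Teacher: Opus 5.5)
Your proof is correct and follows the paper's argument essentially step for step: you get the norm convergence from the Gram-matrix limit of Lemma~\ref{lem-gram}, prove the intertwining exactly on basis vectors $\delta_w$ (including the inverse letters), extend it by linearity and iteration, and then let $t\downarrow0$ in $\norm{P(U_m,V_m)J_tc}_2\le\norm{P(U_m,V_m)|_{\Ho}}\,\norm{J_tc}_2$ before taking the supremum over finitely supported $c$. Your remarks on why the cancellations in $\red$ are genuine operator identities, and on why $J_tc\in\Ho$, only make explicit what the paper leaves implicit; the latter also uses the invariance of $\Ho$ under $U_m^{-1}$ and $V_m^{-1}$, which is immediate by parity.
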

\begin{proof}
Let $c\in\ell^2(\cW)$ be finitely supported. By Lemma~\ref{lem-gram}, we have
$$
\|J_tc\|_2\longrightarrow\|c\|_{\ell^2(\cW)}.
$$
We next verify the intertwining identities. From the definition of the packets, we obtain
$$
U_mJ_t\delta_w =\Psi_{\red(Xw),t}=J_t\mathbf X\delta_w,
$$
and the same argument applies to $V_m$ and both inverses. By linearity and iteration, it follows that
$$
P(U_m,V_m)J_tc=J_tP(\mathbf X,\mathbf Y)c
$$
for every finite Laurent polynomial $P$. Since $P(\mathbf X,\mathbf Y)c$ is also finitely supported, we have
$$
\|P(U_m,V_m)J_tc\|_2 \longrightarrow\|P(\mathbf X,\mathbf Y)c\|_{\ell^2(\cW)}.
$$
Combining this limit with the preceding norm convergence gives
$$
\|P(\mathbf X,\mathbf Y)c\|_{\ell^2(\cW)}  \le \|P(U_m,V_m)|_{\Ho}\|\, \|c\|_{\ell^2(\cW)}.
$$
Taking the supremum over finitely supported unit vectors proves the norm inequality.
  \end{proof}

    The polynomial transfer extends immediately to continuous functional calculus.

\begin{proposition}\label{transfer-cont}
For every $\varphi,\psi\in C(\T)$,
\begin{equation}\label{equ-transfer-9-10}
\norm{\varphi(\mathbf{X})\psi(\mathbf{Y})} \le \norm{\varphi(U_m)\psi(V_m)|_{\Ho}}.
\end{equation}
\end{proposition}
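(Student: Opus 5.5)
The plan is to approximate $\varphi$ and $\psi$ uniformly by trigonometric polynomials and pass to the limit in Proposition~\ref{prop-transfer}. By Fejér's theorem, choose trigonometric polynomials $\varphi_n,\psi_n$ on $\T$, viewed as Laurent polynomials in $\e^{2\pi\i\theta}$, such that
$$
\|\varphi-\varphi_n\|_{C(\T)}\to0,\qquad \|\psi-\psi_n\|_{C(\T)}\to0 .
$$
The product $P_n(u,v)=\varphi_n(u)\psi_n(v)$ is then a finite Laurent polynomial in two noncommuting unitary variables. Proposition~\ref{prop-transfer} therefore gives
$$
\|\varphi_n(\mathbf X)\psi_n(\mathbf Y)\|\le\|\varphi_n(U_m)\psi_n(V_m)|_{\Ho}\| .
$$
Here we identify $\T$ with the unit circle, so that for a trigonometric polynomial $\varphi_n(U_m)$ is the polynomial expression in $U_m^{\pm1}$. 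This identification is consistent with continuous functional calculus.

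Next, continuous functional calculus for a unitary $W$ is contractive: $\|h(W)\|\le\|h\|_{C(\T)}$, since the spectrum of $W$ lies in $\T$. This holds for $\mathbf X,\mathbf Y$ on $\ell^2(\cW)$. It also holds for $U_m,V_m$ restricted to the invariant subspace $\Ho$ (Lemma~\ref{lem-gaussian}), where moreover $h(U_m)|_{\Ho}=h(U_m|_{\Ho})$. Writing
$$
\varphi\psi-\varphi_n\psi_n=(\varphi-\varphi_n)\psi+\varphi_n(\psi-\psi_n)
$$
at the operator level, we get
$$
\|\varphi(\mathbf X)\psi(\mathbf Y)-\varphi_n(\mathbf X)\psi_n(\mathbf Y)\|\le\|\varphi-\varphi_n\|_\infty\|\psi\|_\infty+\|\varphi_n\|_\infty\|\psi-\psi_n\|_\infty\to0 .
$$
The same bound holds on $\Ho$ for $U_m,V_m$, since $\|\varphi_n\|_\infty$ stays bounded. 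Passing to the limit $n\to\infty$ in the polynomial inequality yields \eqref{equ-transfer-9-10}.

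The only point needing care is that functional calculus commutes with restriction to $\Ho$, so that $\varphi(U_m)|_{\Ho}$ is the functional calculus of the unitary $U_m|_{\Ho}$. This holds because $\Ho$ reduces $U_m$: it is invariant under both $U_m$ and $U_m^{-1}$. Polynomials in $U_m^{\pm1}$ therefore preserve $\Ho$, and by uniform limits so does $\varphi(U_m)$. I do not expect any real obstacle; the argument is a routine density step.
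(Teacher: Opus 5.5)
Your proposal is correct and follows essentially the same route as the paper: uniform approximation by trigonometric polynomials, Proposition~\ref{prop-transfer} applied to $p_n(X)q_n(Y)$, and a norm limit using contractivity of the continuous functional calculus. Your remark that $\Ho$ reduces $U_m$ and $V_m$, so the functional calculus commutes with restriction, is a detail the paper leaves implicit but does not change the argument.
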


\begin{proof}
Choose trigonometric polynomials $p_n,q_n$ converging uniformly to $\varphi,\psi$, respectively. Continuous functional calculus
gives, for every unitary $T$,
\begin{equation}\label{fc-approx}
\norm{p_n(T)-\varphi(T)} \le \norm{p_n-\varphi}_\infty, \qquad \norm{q_n(T)-\psi(T)} \le \norm{q_n-\psi}_\infty.
\end{equation}
  Apply Proposition~\ref{prop-transfer} to
 $$
P_n(X,Y)=p_n(X)q_n(Y)
$$
to obtain
\begin{equation}\label{equ-approx-2}
\norm{p_n(\mathbf{X})q_n(\mathbf{Y})} \le \norm{p_n(U_m)q_n(V_m)|_{\Ho}}.
\end{equation}
The two products converge in operator norm by \eqref{fc-approx}; passing to the limit in \eqref{equ-approx-2} proves \eqref{equ-transfer-9-10}.
\end{proof}

 For a proper arc $A$, the discontinuity of $\one_A$ prevents uniform functional-calculus approximation in
 \eqref{equ-transfer-9-10}, and strong-operator convergence alone is not sufficient for the required norm inequality. In Subsection~\ref{proof-main}, we therefore insert compact subarcs and
 continuous cutoffs to transfer the free-model norm formula as a lower bound for the physical concentration operators.

\section{The free projection problem and the half-density threshold}
\label{free-projections}

\subsection{From free projections to a Jacobi operator}

  We now develop a free-projection reduction that represents the interaction of two spectral projections by an explicit half-line Jacobi operator. The key step is to center the projections and exploit the resulting orthogonality of alternating products on the cyclic subspace. For background on free independence and free projections, see \cite{NicaSpeicher06}.

We regard $\T=\R/\Z$ as the unit circle through the identification
$$
t\longmapsto \e^{2\pi i t}.
$$
Accordingly, if $U$ is unitary and $A\subset\T$ is measurable, then $\one_A(U)$ denotes the spectral projection of $U$ associated with $\{\e^{2\pi i t}:t\in A\}$, and $|A|$ denotes normalized Haar measure on $\T$.

  Let $B,C\subset\T$ be measurable sets with
  \begin{equation}\label{stBC}
s=|B|\in(0,1), \qquad t=|C|\in(0,1),
  \end{equation}
  and set
  \begin{equation}\label{equ-pq}
  p=\one_B(\mathbf{X}), \qquad q=\one_C(\mathbf{Y}).
  \end{equation}

    \begin{lemma}\label{lem-root}
   For every reduced word $g$, the scalar spectral measures of $\mathbf{X}$ and $\mathbf{Y}$ associated with the basis vector $\delta_g$ are normalized Haar measure on $\T$. In particular, for the
empty-word vector $\Omega=\delta_o$,
  \begin{equation}\label{root-mass}
    \ip{\one_B(\mathbf{X})\Omega}{\Omega}=|B|, \qquad \ip{\one_C(\mathbf{Y})\Omega}{\Omega}=|C|.
  \end{equation}
  \end{lemma}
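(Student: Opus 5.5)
The plan is to compute the moments $\ip{\mathbf X^k\delta_g}{\delta_g}$ for $k\in\Z$ and show they vanish for every $k\ne0$. Since a probability measure on $\T$ is determined by its Fourier coefficients, and normalized Haar measure is the unique one whose nonzero coefficients all vanish, the scalar spectral measure of $\mathbf X$ at $\delta_g$ must be Haar measure. The same argument then applies verbatim to $\mathbf Y$.

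First I check that $\mathbf X^k\delta_g=\delta_{\red(X^kg)}$. This holds because $\mathbf X$ acts by left multiplication followed by reduction, and reduction is compatible with iteration: left multiplication defines an action of the free group on reduced words. The same is true for negative $k$, using $\mathbf X^{-1}\delta_w=\delta_{\red(X^{-1}w)}$, which follows from unitarity and the permutation description.

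Next I argue that $\red(X^kg)\ne g$ for $k\ne0$. In the free group on $X,Y$, the equality $X^kg=g$ would force $X^k=e$, which is false since the free group has no torsion. Concretely, if $g$ begins with $X^j$ ($j\in\Z$, possibly $j=0$), then $\red(X^kg)$ begins with $X^{k+j}$ and continues with the same remainder, so the two words are different. Because the basis is orthonormal, we get $\ip{\mathbf X^k\delta_g}{\delta_g}=0$ for $k\ne0$, while for $k=0$ the pairing equals $\norm{\delta_g}^2=1$.

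These are exactly the moments of normalized Haar measure. The spectral theorem gives $\ip{\mathbf X^k\delta_g}{\delta_g}=\int_\T \e^{2\pi\i kt}\,\d\mu_g(t)$, and uniqueness of Fourier coefficients of finite measures then yields $\mu_g=$ Haar measure. Specializing to $g=o$ and evaluating on $B$ and $C$ gives \eqref{root-mass}. The only point requiring care is the free-group cancellation claim, namely that $\red$ defines a genuine action. I would handle it by identifying $\cW$ with the free group $\mathbb F_2$ via normal forms, so that $\mathbf X$ and $\mathbf Y$ become the left regular representation.
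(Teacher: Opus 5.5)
Your proposal is correct and follows the paper's own proof: compute $\ip{\mathbf X^k\delta_g}{\delta_g}$, show it vanishes for $k\ne0$ because $\red(X^kg)\ne g$, and conclude by uniqueness of Fourier coefficients that the spectral measure is normalized Haar measure. You also justify two steps the paper leaves implicit: that $\mathbf X^k\delta_g=\delta_{\red(X^kg)}$, since $\mathbf X$ is the left regular representation of the free group on reduced words, and that $\red(X^kg)\ne g$, since the leading $X$-block exponent changes by $k$.
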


 \begin{proof}
 Let $g$ be a reduced word, and let $\mu_{\mathbf X,g}$ be the scalar spectral measure of $\mathbf X$ associated with $\delta_g$. By the spectral theorem, we have
 $$
 \int_{\T}\e^{2\pi\i kt}\,\mathrm d\mu_{\mathbf X,g}(t) =\ip{\mathbf X^k\delta_g}{\delta_g} =\begin{cases}1,&k=0,\\0,&k\ne0,\end{cases}
 $$
where the last equality follows from $\red(X^kg)\ne g$ for $k\ne0$. These are the Fourier coefficients of normalized Haar measure. By uniqueness, $\mu_{\mathbf X,g}$ is normalized Haar measure. Applying the same argument to $\mathbf Y$ and taking $g=o$, we obtain \eqref{root-mass}.
 \end{proof}

The next observation will later allow us to pass from the cyclic
Jacobi model back to the full operator norm.

\begin{lemma}\label{spectral-detection}
For a reduced word $g$, define
$$
\mathbf{R}_g\delta_w=\delta_{\red(wg)}.
$$
Then $\mathbf{R}_g$ commutes with $\mathbf{X}$ and $\mathbf{Y}$. More generally, let $S$ be a bounded self-adjoint operator on $\ell^2(\cW)$ that commutes with every $\mathbf{R}_g$. Then, for every open interval $J$,
$$
\one_J(S)\Omega=0 \quad\Longrightarrow\quad \one_J(S)=0.
$$
In particular, the scalar spectral measure of $S$ associated with $\Omega$ has support equal to $\spec(S)$.
\end{lemma}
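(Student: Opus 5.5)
The argument has three parts. We first check the commutation relations on the basis. Then we move the vanishing of $\one_J(S)$ from $\Omega$ to every basis vector using the right translations. Finally, we read off the support of the spectral measure.

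\emph{Commutation.} Take a reduced word $w$. Reduction of words is associative, so $\red(X\,\red(wg))=\red(Xwg)=\red(\red(Xw)\,g)$. Hence
$$
\mathbf X\mathbf R_g\delta_w=\delta_{\red(Xwg)}=\mathbf R_g\mathbf X\delta_w,
$$
and the same computation applies to $\mathbf Y$. The map $w\mapsto\red(wg)$ is a bijection of $\cW$, with inverse $w\mapsto\red(wg^{-1})$. Therefore $\mathbf R_g$ is a unitary permutation operator, with $\mathbf R_g^{-1}=\mathbf R_{g^{-1}}$. Since the identities hold on the orthonormal basis, they extend to all of $\ell^2(\cW)$ by boundedness.

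\emph{Main step.} Let $S$ be bounded and self-adjoint, and assume $S$ commutes with every $\mathbf R_g$. Then $\one_J(S)$ is a Borel function of $S$, so by the spectral theorem it commutes with every bounded operator commuting with $S$; in particular it commutes with each unitary $\mathbf R_g$. Note that $\mathbf R_g\Omega=\delta_{\red(og)}=\delta_g$. If $\one_J(S)\Omega=0$, then
$$
\one_J(S)\delta_g=\one_J(S)\mathbf R_g\Omega=\mathbf R_g\one_J(S)\Omega=0
$$
for every $g\in\cW$. The vectors $\delta_g$ span a dense subspace, and $\one_J(S)$ is bounded, so $\one_J(S)=0$. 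The one point requiring care is the commutation of $\one_J(S)$ with $\mathbf R_g$. This follows directly from the spectral theorem, or from approximating $\one_J$ on $\spec(S)$ by polynomials in the strong operator topology. No other step is difficult.

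\emph{Support.} Let $\mu_\Omega$ be the scalar spectral measure of $S$ associated with $\Omega$. Then $\mu_\Omega(J)=\|\one_J(S)\Omega\|^2$, so $\supp\mu_\Omega\subset\spec(S)$. For the reverse inclusion, take $\lambda\in\spec(S)$ and suppose $\lambda\notin\supp\mu_\Omega$. Then some open interval $J\ni\lambda$ has $\mu_\Omega(J)=0$, which means $\one_J(S)\Omega=0$. By the main step, $\one_J(S)=0$. But a point of the spectrum carries nonzero spectral projection on every open neighbourhood, so $\one_J(S)\ne0$, a contradiction. Hence $\supp\mu_\Omega=\spec(S)$.
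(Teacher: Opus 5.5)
Your proposal is correct and follows the paper's proof essentially verbatim. You get the commutation of $\mathbf{R}_g$ with $\mathbf{X},\mathbf{Y}$ from the compatibility of reduction with left and right concatenation, then transport $\one_J(S)\Omega=0$ to every basis vector $\delta_g=\mathbf{R}_g\Omega$ using that $\one_J(S)$ commutes with $\mathbf{R}_g$; your explicit derivation of the support statement, via $\one_J(S)\ne0$ for every open interval $J$ meeting $\spec(S)$, just spells out the ``in particular'' clause that the paper leaves to the reader.
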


 \begin{proof}
Left and right concatenation commute before reduction and therefore after cancelling adjacent inverse pairs, so $\mathbf{R}_g$ commutes with $\mathbf{X}$ and $\mathbf{Y}$.

Now suppose that $S$ commutes with every $\mathbf{R}_g$. Since $S$ is self-adjoint,
each spectral projection $\one_J(S)$ also commutes with every $\mathbf{R}_g$. If $\one_J(S)\Omega=0$, then
$$
\one_J(S)\delta_g = \one_J(S)\mathbf{R}_g\Omega = \mathbf{R}_g\one_J(S)\Omega = 0
$$
for every reduced word $g$. Since the vectors $\delta_g$ form an orthonormal basis of
$\ell^2(\cW)$, it follows that $\one_J(S)=0$.
\end{proof}

    With notations \eqref{stBC}-\eqref{equ-pq} in mind, put
\begin{equation}\label{alpha-beta}
\alpha=\sqrt{s(1-s)}, \qquad \beta=\sqrt{t(1-t)},
\end{equation}
and define the centered projections
\begin{equation}\label{EF}
\mathbf{E}=\frac{p-sI}{\alpha}, \qquad \mathbf{F}=\frac{q-tI}{\beta}.
\end{equation}

\begin{lemma}\label{lem-centered}
 The centered operators $\mathbf{E},\mathbf{F}$ satisfy
$$
\ip{\mathbf{E}\Omega}{\Omega} =\ip{\mathbf{F}\Omega}{\Omega}=0, \qquad \norm{\mathbf{E}\Omega}  =\norm{\mathbf{F}\Omega}=1.
$$
Moreover,
\begin{equation}
\mathbf{E}^2=I+\chi_s\mathbf{E}, \qquad \mathbf{F}^2=I+\chi_t\mathbf{F}, \qquad \chi_s=\frac{1-2s}{\alpha},  \quad \chi_t=\frac{1-2t}{\beta}. \label{equ-center-1}
\end{equation}
\end{lemma}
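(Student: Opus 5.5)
The plan is to verify the three claims by direct computation: the first two from Lemma~\ref{lem-root}, and the quadratic relation from $p^2=p$.

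\emph{Vanishing mean.} Since $\norm{\Omega}=1$, Lemma~\ref{lem-root} gives
$$
\ip{\mathbf E\Omega}{\Omega}=\alpha^{-1}\bigl(\ip{p\Omega}{\Omega}-s\bigr)=\alpha^{-1}(|B|-s)=0 .
$$
The same argument applied to $q=\one_C(\mathbf Y)$ gives $\ip{\mathbf F\Omega}{\Omega}=0$.

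\emph{Unit norm.} Because $p$ is an orthogonal projection, $\ip{p\Omega}{p\Omega}=\ip{p\Omega}{\Omega}=s$. Expanding the square therefore gives
$$
\norm{(p-s)\Omega}^2=\ip{p\Omega}{\Omega}-2s\ip{p\Omega}{\Omega}+s^2=s-2s^2+s^2=s(1-s)=\alpha^2 .
$$
Hence $\norm{\mathbf E\Omega}=1$. The identical computation with $t$ and $\beta$ gives $\norm{\mathbf F\Omega}=1$. Note that $\alpha,\beta>0$ because $s,t\in(0,1)$ by \eqref{stBC}, so the centered operators are well defined.

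\emph{Quadratic relation.} From $p^2=p$,
$$
(p-s)^2=p-2sp+s^2=(1-2s)p+s^2=(1-2s)(p-s)+s(1-s).
$$
Dividing by $\alpha^2=s(1-s)$ yields
$$
\mathbf E^2=I+\frac{1-2s}{\alpha}\,\mathbf E=I+\chi_s\mathbf E .
$$
The relation for $\mathbf F$ follows in exactly the same way with $t,\beta$.

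There is no real obstacle here; the only input beyond the algebra $p^2=p$, $q^2=q$ is the identification of the root masses in \eqref{root-mass}.
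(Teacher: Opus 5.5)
Your proposal is correct and follows essentially the same route as the paper: both obtain the vanishing means from Lemma~\ref{lem-root}, compute $\norm{(p-sI)\Omega}^2=s(1-s)$ from $p=p^*=p^2$, and derive the quadratic relation by substituting $p=sI+\alpha\mathbf{E}$ into $p^2=p$. Your remark that $\alpha,\beta>0$ because $s,t\in(0,1)$ is a small point the paper leaves implicit.
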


 \begin{proof}
The zero expectations follow from Lemma~\ref{lem-root}. Since $p=p^*=p^2$ and $\norm\Omega=1$,
\begin{align*}
\norm{(p-sI)\Omega}^2 &=\ip{(p-sI)^2\Omega}{\Omega}\\
&=(1-2s)\ip{p\Omega}{\Omega}+s^2\\
&=(1-2s)s+s^2=s(1-s).
\end{align*}
Thus $\norm{\mathbf{E}\Omega}=1$, and the calculation for $\mathbf{F}$ is identical. Finally, substituting $p=sI+\alpha\mathbf{E}$ into $p^2=p$ gives
$$
 \alpha^2\mathbf{E}^2 =s(1-s)I+\alpha(1-2s)\mathbf{E},
$$
and the relation for $\mathbf{F}$ follows in the same way from $q=tI+\beta\mathbf{F}$.
\end{proof}

Centering removes the zero Fourier modes. The reduced-word structure then turns alternating centered products into the natural orthonormal basis for the two-projection cyclic space.

 \begin{lemma}\label{alternating}
 Every nonempty alternating product of $\mathbf{E}$ and $\mathbf{F}$ has zero expectation at $\Omega$:
 $$
 \ip{Z_1Z_2\cdots Z_N\Omega}{\Omega}=0, \qquad Z_j\in\{\mathbf{E},\mathbf{F}\}, \quad   Z_j\ne Z_{j+1}.
 $$
 Consequently, vectors obtained by applying distinct alternating words in $\mathbf{E},\mathbf{F}$ to $\Omega$ are orthonormal.
  \end{lemma}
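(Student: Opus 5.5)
The plan is to expand each centered factor in Fourier modes and then use the reduced-word structure of $\ell^2(\cW)$. Since $\one_B$ is a bounded function on $\T$, $p=\one_B(\mathbf X)$. Identify $\ell^2(\cW)$ with $\ell^2$ of the free group on $X,Y$, with $\mathbf X,\mathbf Y$ acting by left translation. Then the algebra generated by $\mathbf X$ lies in the group von Neumann algebra $L(\langle X\rangle)$ of the subgroup $\langle X\rangle\cong\Z$. The unitary map $\ell^2(\langle X\rangle)\to L^2(\T)$, $\delta_{X^k}\mapsto \e^{2\pi\i k\cdot}$, intertwines $\mathbf X$ with multiplication by $\e^{2\pi\i t}$; Lemma~\ref{lem-root} is essentially this statement. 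Consequently
$$
(p-sI)\Omega=\sum_{k\ne0}\widehat{\one_B}(k)\,\delta_{X^k},
$$
because the zero mode equals $s$ and cancels. More generally, for any vector of the form $\delta_g$, $(p-sI)\delta_g=\sum_{k\ne0}\widehat{\one_B}(k)\delta_{\red(X^kg)}$, with $L^2$ convergence. The same holds for $q-tI$ with $Y$ in place of $X$.

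Next I would prove by induction on $N$ the following statement. For an alternating product $Z_1\cdots Z_N$ with $Z_N$ corresponding to the letter $L_N\in\{X,Y\}$, the vector $Z_1\cdots Z_N\Omega$ lies in the closed span $\mathcal K(L_1,\dots,L_N)$ of the $\delta_w$ for which $w$ is a reduced word of the form $L_1^{k_1}L_2^{k_2}\cdots L_N^{k_N}$ with all $k_j\ne0$. Here the $L_j$ alternate because the $Z_j$ alternate. The inductive step uses the fact that if $w$ begins with a nonzero power of $L_2\ne L_1$, then $L_1^{k}w$ with $k\ne0$ is already reduced and has the required form. Applying $Z_1$ to the expansion of $Z_2\cdots Z_N\Omega$ therefore stays in $\mathcal K(L_1,\dots,L_N)$. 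To justify the limit interchange, note that $Z_1$ is bounded and preserves closed spans once it maps each basis vector of the span correctly.

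With this in hand, both claims follow. For $N\ge1$ every word in $\mathcal K(L_1,\dots,L_N)$ is nonempty, so $\mathcal K\perp\Omega$ and the expectation vanishes. For two distinct alternating words, the letter patterns $(L_1,\dots,L_N)$ and $(L'_1,\dots,L'_{N'})$ differ: either the lengths differ, or the first letters differ, since an alternating pattern is determined by its first letter and its length. The sets of reduced words of the two forms are then disjoint, because a reduced word in a free group has a unique syllable decomposition. Hence the subspaces $\mathcal K$ are mutually orthogonal, and also orthogonal to $\Omega$, which corresponds to the empty word. For normalization, each basis vector $\delta_{L_1^{k_1}\cdots L_N^{k_N}}$ receives the coefficient $\prod_j c^{(j)}_{k_j}$, where $c^{(j)}$ are the normalized nonzero Fourier coefficients of $\one_B/\alpha$ or $\one_C/\beta$. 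The norm squared is therefore $\prod_j\sum_{k\ne0}|c^{(j)}_k|^2=1$ by Lemma~\ref{lem-centered} and Parseval.

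The main obstacle is the bookkeeping in the inductive step, namely checking that no cancellation occurs. This holds because consecutive syllables use different generators. The convergence of the infinite Fourier sums can be handled by the $L^2$ isometry above, so no further analytic difficulty should arise.
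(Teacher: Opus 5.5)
Your argument is correct, and it reaches the conclusion by a different route from the paper.

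\textbf{The paper's route.} The paper handles only the vanishing of alternating moments analytically. It replaces $h_B=(\one_B-s)/\alpha$ and $h_C=(\one_C-t)/\beta$ by their Fej\'er means, which are trigonometric polynomials with no zero mode. It uses Lemma~\ref{lem-root} to get the strong convergence $h_{B,N}(\mathbf X)\to\mathbf E$. It then observes that alternating block words $\mathbf X^{k_1}\mathbf Y^{\ell_1}\cdots$ with nonzero exponents send $\Omega$ to nonempty reduced words, and passes to the limit. Orthonormality is then derived purely algebraically. The paper writes $\ip{W_1\Omega}{W_2\Omega}=\ip{W_2^*W_1\Omega}{\Omega}$ and collapses the junction with the quadratic relations $\mathbf E^2=I+\chi_s\mathbf E$, $\mathbf F^2=I+\chi_t\mathbf F$, until only the identity or a nonempty alternating word remains.

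\textbf{Your route.} You use the exact $\ell^2$ Fourier expansion of $(p-sI)\delta_g$ along each coset $\langle X\rangle g$. This can be justified, for instance, via $\mathbf R_g$ from Lemma~\ref{spectral-detection}, and it removes the need for any Fej\'er approximation. You then track supports: $W\Omega$ lives on the reduced words with the syllable pattern of $W$, and it carries the product coefficients $\prod_j c^{(j)}_{k_j}$. Orthogonality follows from disjointness of syllable patterns, and normalization from a product of Parseval identities. The quadratic relations are never used.

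\textbf{Comparison.} Your version yields more information: an explicit description of each $W\Omega$, which effectively identifies $\mathcal{H}_{\rm cyc}$ concretely inside $\ell^2(\cW)$. The paper's version isolates exactly the freeness property (vanishing alternating moments). It deduces orthonormality from it in a model-independent way that reuses the same quadratic relations needed later in Proposition~\ref{prop-Jacobi}.
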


\begin{proof}
Set
$$
h_B=\frac{\one_B-s}{\alpha}, \qquad h_C=\frac{\one_C-t}{\beta}.
$$
Both functions have mean zero. Let
$$
K_N(t) = \sum_{|k|\le N} \left(1-\frac{|k|}{N+1}\right) \e^{2\pi\i kt}
$$
be the $N$-th Fej\'er kernel on $\T$, and set
\begin{equation}\label{Fejer-01}
h_{B,N}=K_N*h_B, \qquad h_{C,N}=K_N*h_C.
\end{equation}
Then, as $N\to \infty$,
\begin{equation}\label{equ-Fejer-2}
h_{B,N}\longrightarrow h_B, \qquad h_{C,N}\longrightarrow h_C \quad\text{in }L^2(\T),
\end{equation}
and
$$
\norm{h_{B,N}}_{\infty}\le\norm{h_B}_{\infty},    \qquad \norm{h_{C,N}}_{\infty}\le\norm{h_C}_{\infty}.
$$
Since $h_B$ and $h_C$ have mean zero, each Fej\'er mean in \eqref{Fejer-01} has zero constant Fourier coefficient.

By Lemma~\ref{lem-root}, the scalar spectral measure of $\mathbf{X}$ associated with every basis vector $\delta_g$ is normalized Haar measure on $\T$. Since $h_B(\mathbf{X})=\mathbf{E}$, the spectral theorem gives
 \begin{equation}\label{Fejer-calculus}
\norm{\bigl(h_{B,N}(\mathbf{X})-\mathbf{E}\bigr)\delta_g}_{\ell^2(\cW)}^2 = \int_{\T}|h_{B,N}-h_B|^2\d\mu_{\mathbf{X},g} = \norm{h_{B,N}-h_B}_{L^2(\T)}^2.
 \end{equation}
By \eqref{equ-Fejer-2} and \eqref{Fejer-calculus},
$$
\bigl(h_{B,N}(\mathbf{X})-\mathbf{E}\bigr)\delta_g \longrightarrow0 \quad\text{in }\ell^2(\cW)
$$
  for every $g\in\cW$. The operators $h_{B,N}(\mathbf{X})$ are uniformly bounded, so convergence on the orthonormal basis $\{\delta_g:g\in\cW\}$ extends by density to strong convergence on $\ell^2(\cW)$. Thus
\begin{equation}\label{equ-Fejer-4}
\norm{\bigl(h_{B,N}(\mathbf{X})-\mathbf{E}\bigr)f}_{\ell^2(\cW)} \longrightarrow0, \qquad \norm{\bigl(h_{C,N}(\mathbf{Y})-\mathbf{F}\bigr)f}_{\ell^2(\cW)} \longrightarrow0
\end{equation}
 for every $f\in\ell^2(\cW)$. The second convergence follows in exactly the same way from Lemma~\ref{lem-root} and \eqref{equ-Fejer-2}.

Each $h_{B,N}$ and $h_{C,N}$ is a trigonometric polynomial with no zero Fourier mode. Hence an alternating product of $h_{B,N}(\mathbf{X})$ and $h_{C,N}(\mathbf{Y})$ is a finite linear combination of block words of the form
$$
\mathbf{X}^{k_1}\mathbf{Y}^{\ell_1} \mathbf{X}^{k_2}\cdots, \qquad k_j,\ell_j\ne0,
$$
with nonzero $X$- and $Y$-blocks alternating. Acting on $\Omega$, every such monomial produces a nonempty reduced word and is therefore orthogonal to $\Omega$ in $\ell^2(\cW)$. Since the approximating operators are uniformly bounded, \eqref{equ-Fejer-4} implies that their finite alternating products converge strongly to the corresponding products of $\mathbf{E}$ and $\mathbf{F}$. Passing to the limit yields
\begin{equation}\label{alternating-0}
\ip{Z_1\cdots Z_m\Omega}{\Omega}=0
\end{equation}
for every nonempty alternating centered word $Z_1\cdots Z_m$.

  It remains to prove orthogonality. Let $W_1,W_2$ be alternating centered words. Since $\mathbf{E}$ and $\mathbf{F}$ are self-adjoint,
   \begin{equation}\label{equ-alt-2}
    \ip{W_1\Omega}{W_2\Omega} =  \ip{W_2^*W_1\Omega}{\Omega}.
  \end{equation}
 If the first letters of $W_1$ and $W_2$ are different, then $W_2^*W_1$ is a nonempty alternating word, and \eqref{alternating-0} shows that the right-hand
  side of \eqref{equ-alt-2} is zero. If the first letters agree, apply the corresponding quadratic relation in \eqref{equ-center-1} at the junction of $W_2^*$
  and $W_1$. The identity term removes the two matching letters, while the remaining term is again a nonempty alternating word and has zero expectation by \eqref{alternating-0}. Repeating this reduction gives
 \begin{equation*}
 \ip{W_1\Omega}{W_2\Omega} =
  \begin{cases}
  1,& W_1=W_2,\\
  0,& W_1\ne W_2.
  \end{cases} \qedhere
  \end{equation*}
  \end{proof}

Jacobi structures associated with products of random projections are discussed in \cite{Collins05}; the
derivation below is self-contained and adapted to the present free-group model.

We first isolate the subspace generated from the empty-word vector
 $\Omega$ by the two projections $p,q$, given by \eqref{equ-pq}. Define
\begin{equation}\label{cyclic-space}
\mathcal{H}_{\rm cyc} = \overline{\operatorname{span}} \left\{ W\Omega:  W \text{ is a finite word in  }p\text{ and }q \right\}.
\end{equation}
Since $p^2=p$ and $q^2=q$, every finite word in $p$ and $q$ reduces by removing
consecutive repetitions to an alternating word. Thus $\mathcal{H}_{\rm cyc}$ is equivalently the closed linear span of
$$
\Omega,\quad p\Omega,\quad q\Omega,\quad pq\Omega,\quad qp\Omega,\quad pqp\Omega,\quad qpq\Omega,\quad pqpq\Omega,\quad qpqp\Omega,\ \ldots .
$$

The centered alternating-word basis constructed above gives an explicit description of the compression of $pqp$ to the $p$-range of this cyclic subspace.

\begin{proposition}\label{prop-Jacobi}
Let $p,q$ be defined in \eqref{equ-pq}, and let $\mathcal{H}_{\rm cyc}$ be defined by \eqref{cyclic-space}. Set
$$
T=pqp\big|{p\mathcal{H}_{\rm cyc}}.
$$
There exists an orthonormal basis $(v_n)_{n\ge0}$ of $p\mathcal{H}_{\rm cyc}$ such that
 \begin{align}
 Tv_0&=t\,v_0+s\beta\,v_1+c\,v_2, \label{Jacobi-0}\\
Tv_1&=s\beta\,v_0+d\,v_1+c\,v_3, \label{Jacobi-1}\\
Tv_n&=c\,v_{n-2}+d\,v_n+c\,v_{n+2}, \qquad n\ge2, \label{Tn}
\end{align}
where
\begin{equation}\label{equ-Jacobi-cd}
c=\sqrt{s(1-s)t(1-t)}=\alpha\beta, \qquad d=s(1-t)+t(1-s).
\end{equation}
 Moreover,
  \begin{equation}\label{norm-reduction}
\norm{pq}^2=\norm{pqp}=\norm T.
\end{equation}
\end{proposition}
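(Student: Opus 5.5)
The plan is to build $(v_n)$ from the orthonormal centered alternating words of Lemma~\ref{alternating}, to obtain the Jacobi relations by a $2\times2$ block computation for $p$ and for $q$, and to get \eqref{norm-reduction} from Lemma~\ref{spectral-detection}.

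\textbf{Step 1: the two $2\times2$ blocks.} By Lemma~\ref{alternating}, the vectors $W\Omega$, with $W$ an alternating word in $\mathbf E,\mathbf F$ (including the empty word), are orthonormal. Since $p=sI+\alpha\mathbf E$ and $q=tI+\beta\mathbf F$, and the relations \eqref{equ-center-1} let us reduce repeated letters, these vectors span $\mathcal H_{\rm cyc}$. Hence they form an orthonormal basis of it.

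Group the basis into pairs $\{W'\Omega,\ \mathbf EW'\Omega\}$, where $W'$ is empty or begins with $\mathbf F$. Using $\mathbf E^2=I+\chi_s\mathbf E$ and $s+\alpha\chi_s=1-s$, one checks
$$
p\,W'\Omega=sW'\Omega+\alpha\mathbf EW'\Omega,\qquad p\,\mathbf EW'\Omega=\alpha W'\Omega+(1-s)\mathbf EW'\Omega .
$$
So on each pair $p$ is the rank-one projection onto
$$
u(W')=\sqrt s\,W'\Omega+\sqrt{1-s}\,\mathbf EW'\Omega .
$$
Enumerate the admissible $W'$ as $W'_0=\varnothing$, $W'_1=\mathbf F$, $W'_2=\mathbf F\mathbf E$, $W'_3=\mathbf F\mathbf E\mathbf F,\dots$, and set $v_n=u(W'_n)$. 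Then $(v_n)_{n\ge0}$ is an orthonormal basis of $p\mathcal H_{\rm cyc}$.

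The same computation with $\mathbf F$ in place of $\mathbf E$ shows that $q$ acts on each pair $\{W''\Omega,\mathbf FW''\Omega\}$, where $W''$ is empty or begins with $\mathbf E$, as the rank-one projection onto $\sqrt t\,W''\Omega+\sqrt{1-t}\,\mathbf FW''\Omega$.

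\textbf{Step 2: the Jacobi relations.} Compute $Tv_n=pqv_n$ in three moves.
\begin{enumerate}
\item Write $v_n$ in the centered basis. Each of its two components lies in some $q$-pair: $\varnothing$ is paired with $\mathbf F$, $\mathbf E$ with $\mathbf F\mathbf E$, $\mathbf F\mathbf E$ with $\mathbf E$ prefixed by nothing, and so on.
\item Apply the $q$-block to each component.
\item Apply the $p$-block and re-express the result in the $v_k$.
\end{enumerate}
Generically $W'_n$ meets only $W'_{n\pm2}$, because adding or removing the pair $\mathbf F\mathbf E$ is what moves between $q$-pairs and $p$-pairs. This should give the coefficients $c=\alpha\beta$ off the diagonal and $d=s(1-t)+t(1-s)$ on it.

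The boundary words $\varnothing$ and $\mathbf F$ share a $q$-pair, namely $\{\Omega,\mathbf F\Omega\}$. This produces the coupling $v_0\leftrightarrow v_1$ with coefficient $s\beta$ and the diagonal entry $t$ at $v_0$, giving \eqref{Jacobi-0}–\eqref{Jacobi-1}. The bookkeeping of the coefficients is the main obstacle. It is routine but error-prone, and I would check it against the known moments $\langle Tv_0,v_0\rangle=\langle pqp\Omega,\Omega\rangle/s=t$.

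\textbf{Step 3: the norm identity.} The identity $\norm{pq}^2=\norm{pqp}$ is standard, and $\norm T\le\norm{pqp}$ holds because $T$ is a restriction of $pqp$.

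For the reverse inequality, note that $\mathbf R_g$ commutes with $\mathbf X$ and $\mathbf Y$, hence with $p$, $q$ and $S=pqp$. By Lemma~\ref{spectral-detection}, $\spec(pqp)$ equals the support of the spectral measure $\mu_\Omega$ of $S$ at $\Omega$.

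Decompose $\Omega=\sqrt s\,v_0+(I-p)\Omega$. Since $\mathcal H_{\rm cyc}$ reduces $p$ and $q$, and $pqp$ vanishes on $(I-p)\mathcal H_{\rm cyc}$, we get
$$
\mu_\Omega=s\,\mu^T_{v_0}+(1-s)\,\delta_0 .
$$
Its support lies in $\spec(T)\cup\{0\}\subset[0,\norm T]$. Since $S\ge0$, this gives $\norm{pqp}=\max\spec S\le\norm T$.
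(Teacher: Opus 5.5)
Your proposal is correct and is essentially the paper's proof. You build the same basis $v_n=\sqrt s\,W'_n\Omega+\sqrt{1-s}\,\mathbf EW'_n\Omega$ from Lemma~\ref{alternating}, and you use Lemma~\ref{spectral-detection} for \eqref{norm-reduction} as the paper does; your splitting of $\mu_\Omega$ into $s\,\mu^T_{v_0}$ plus a point mass at $0$ is an explicit version of the paper's contradiction argument with $\one_{(a,\infty)}(pqp)$.

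The coefficient bookkeeping you left at ``should give'' does work out. Your $q$-blocks give, for $n\ge2$,
\[
q\,W'_n\Omega=\beta\,\mathbf EW'_{n-2}\Omega+(1-t)W'_n\Omega,\qquad q\,\mathbf EW'_n\Omega=t\,\mathbf EW'_n\Omega+\beta\,W'_{n+2}\Omega,
\]
and at the boundary $q\Omega=t\Omega+\beta\mathbf F\Omega$, $q\mathbf F\Omega=\beta\Omega+(1-t)\mathbf F\Omega$. Applying $pW'_k\Omega=\sqrt s\,v_k$ and $p\,\mathbf EW'_k\Omega=\sqrt{1-s}\,v_k$ then yields exactly \eqref{Jacobi-0}--\eqref{Tn}. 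The paper does the same computation via $q=tI+\beta\mathbf F$ and $\mathbf Fw_n=u_{n-2}+\chi_tw_n$, $\mathbf Fu_n=w_{n+2}$.
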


  \begin{proof}
 For $n\ge1$, let $w_n$ be the vector obtained by applying to $\Omega$
the alternating centered word of length $n$ beginning with $\mathbf{F}$, and set
\begin{equation}\label{wu}
w_0=\Omega, \qquad u_n=\mathbf{E} w_n, \qquad n\ge0.
\end{equation}
By Lemma~\ref{alternating}, the family
\begin{equation}\label{equ-wu-2}
\{w_n,u_n:n\ge0\}
\end{equation}
is orthonormal. Conversely, the quadratic relations \eqref{equ-center-1} reduce every word in $\mathbf{E},\mathbf{F}$ to a linear combination of alternating words. Since
$$
p=sI+\alpha\mathbf{E}, \qquad q=tI+\beta\mathbf{F},
 $$
and conversely $\mathbf{E}$ and $\mathbf{F}$ are linear combinations of $I,p$ and $I,q$, respectively, the pairs $(p,q)$ and $(\mathbf{E},\mathbf{F})$ generate the same cyclic subspace from $\Omega$. Hence \eqref{equ-wu-2} is an orthonormal basis of $\mathcal{H}_{\rm cyc}$.

By the definition \eqref{wu}, using $\mathbf{E}^2=I+\chi_s\mathbf{E}$ from \eqref{equ-center-1}, we also have
$$
\mathbf{E} u_n = \mathbf{E}^2w_n = w_n+\chi_su_n.
$$
Thus, on $\operatorname{span}\{w_n,u_n\}$,
$$
p=sI+\alpha\mathbf{E} \sim
\begin{pmatrix}
s&\alpha\\
\alpha&1-s
\end{pmatrix},
$$
 which is the rank-one projection onto
\begin{equation}\label{vn}
  v_n=\sqrt{s}\,w_n+\sqrt{1-s}\,u_n.
\end{equation}
Consequently, $(v_n)_{n\ge0}$ is an orthonormal basis of $p\mathcal{H}_{\rm cyc}$ and
\begin{equation}\label{equ-pwu}
pw_n=\sqrt{s}\,v_n, \qquad pu_n=\sqrt{1-s}\,v_n.
\end{equation}

For $n\ge2$, the relation $\mathbf{F}^2=I+\chi_t\mathbf{F}$ in \eqref{equ-center-1} gives
\begin{equation}\label{F-interior}
\mathbf{F}w_n=u_{n-2}+\chi_t w_n, \qquad \mathbf{F}u_n=w_{n+2}.
\end{equation}
Combining \eqref{vn}, \eqref{equ-pwu}, and \eqref{F-interior}, we obtain
\begin{equation}\label{pF}
p\mathbf{F}v_n = \alpha v_{n-2}+s\chi_t v_n+\alpha v_{n+2}.
\end{equation}
Since $q=tI+\beta\mathbf{F}$, equations \eqref{pF} and \eqref{equ-Jacobi-cd} yield
\begin{align*}
Tv_n &=t v_n+\beta p\mathbf{F}v_n\\
&=c v_{n-2} +\bigl(t+s(1-2t)\bigr)v_n +c v_{n+2}\\
&=c v_{n-2}+d v_n+c v_{n+2},
\end{align*}
which proves \eqref{Tn}.

At the boundary, the same relations give
\begin{equation}\label{equ-F-0}
\mathbf{F}w_0=w_1,\qquad \mathbf{F}u_0=w_2,\qquad \mathbf{F}w_1=w_0+\chi_t    w_1,\qquad \mathbf{F}u_1=w_3.
\end{equation}
Using \eqref{vn}, \eqref{equ-pwu}, and \eqref{equ-F-0},
\begin{equation}\label{pF-boundary}
p\mathbf{F}v_0=s v_1+\alpha v_2, \qquad p\mathbf{F}v_1=s   v_0+s\chi_t v_1+\alpha v_3.
\end{equation}
Therefore $q=tI+\beta\mathbf{F}$ and \eqref{equ-Jacobi-cd} give
\eqref{Jacobi-0} and \eqref{Jacobi-1}. This proves the Jacobi representation.

It remains to show that the restriction of $pqp$ to $\mathcal{H}_{\rm cyc}$ has the same norm as $pqp$. Set
$$
S=pqp.
$$
Suppose that	$\norm{S|_{\mathcal{H}_{\rm cyc}}}<\norm S$. Choose
$$
\norm{S|_{\mathcal{H}_{\rm cyc}}}<a<\norm S
$$
and let $J=(a,\infty)$. Since $S\ge0$, the spectral projection $\one_J(S)$ is nonzero, whereas the reducing property gives $\one_J(S)\Omega=0$. This contradicts Lemma~\ref{spectral-detection}. Hence
\begin{equation}\label{equ-cyclic-3}
\norm{S|_{\mathcal{H}_{\rm cyc}}}=\norm S.
\end{equation}

 Finally,
$$
\mathcal{H}_{\rm cyc} = p\mathcal{H}_{\rm cyc} \oplus (I-p)\mathcal{H}_{\rm cyc},
$$
and $S$ restricts to $T$ on the first summand and vanishes on the second. Thus \eqref{equ-cyclic-3} gives
$$
\norm{pqp}=\norm T.
$$
Since $p$ and $q$ are orthogonal projections,
$$
  \norm{pq}^2 = \norm{(pq)(pq)^*} = \norm{pqp} = \norm T,
$$
which proves \eqref{norm-reduction}.
\end{proof}

\subsection{The norm formula and the half-density transition}

We now compute the norm of the Jacobi operator from Proposition~\ref{prop-Jacobi}.

\begin{lemma}\label{band-edge}
Let $T=pqp\big|{p\mathcal{H}_{\rm cyc}}$, and set
  \begin{equation}\label{edge-formula}
  x_\pm := d\pm2c = \left( \sqrt{s(1-t)}    \pm \sqrt{t(1-s)}  \right)^2.
\end{equation}
  Then
  $$
  \spec_{\rm ess}(T)=[x_-,x_+].
  $$
  In particular,
  $$
  x_+\in\spec_{\rm ess}(T), \qquad \norm T\ge x_+.
$$
  \end{lemma}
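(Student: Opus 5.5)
The plan is to view $T$, in the basis $(v_n)_{n\ge0}$ of Proposition~\ref{prop-Jacobi}, as a finite-rank perturbation of a direct sum of two copies of a free half-line Jacobi operator, and then to invoke Weyl's theorem on essential spectra. The relations \eqref{Tn} couple $v_n$ only to $v_{n\pm2}$. The even-indexed and odd-indexed vectors therefore span two invariant chains, apart from the boundary entry $s\beta$ that couples $v_0$ and $v_1$. Let $T_0$ be the operator with $T_0v_n=c\,v_{n-2}+d\,v_n+c\,v_{n+2}$ for all $n\ge0$, with the convention $v_{-1}=v_{-2}=0$. Then $T-T_0$ is supported on $\operatorname{span}\{v_0,v_1\}$. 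Its entries there are $(t-d)$ on $v_0$, $0$ on $v_1$, and $s\beta$ in the off-diagonal position. This is a self-adjoint operator of rank at most two, so Weyl's theorem gives $\spec_{\rm ess}(T)=\spec_{\rm ess}(T_0)$.

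Next I will compute $\spec_{\rm ess}(T_0)$. Under the identification $v_{2k}\mapsto e_k$ on the first copy and $v_{2k+1}\mapsto e_k$ on the second, $T_0$ is unitarily equivalent to $(dI+cL)\oplus(dI+cL)$. Here $L$ is the free half-line Jacobi operator on $\ell^2(\N_0)$, given by $Le_k=e_{k-1}+e_{k+1}$ with $e_{-1}=0$. It is standard that $\spec(L)=\spec_{\rm ess}(L)=[-2,2]$. One way to see this is to write $L=S+S^*$ with $S$ the unilateral shift. Then $L$ is a compact (rank-one) perturbation of the compression of the bilateral operator $\widetilde S+\widetilde S^*$, which is unitarily equivalent via Fourier series to multiplication by $2\cos\theta$. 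Alternatively, the sine transform $e_k\mapsto\sqrt{2/\pi}\,\sin((k+1)\theta)$ diagonalizes $L$ as multiplication by $2\cos\theta$ on $L^2(0,\pi)$, and this multiplication operator has purely continuous spectrum $[-2,2]$. Since $c>0$, we get $\spec_{\rm ess}(T_0)=[d-2c,d+2c]=[x_-,x_+]$.

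The algebraic identity in \eqref{edge-formula} then follows by expanding:
$$
\left(\sqrt{s(1-t)}\pm\sqrt{t(1-s)}\right)^2=s(1-t)+t(1-s)\pm2\sqrt{s(1-s)t(1-t)}=d\pm2c,
$$
using \eqref{equ-Jacobi-cd}. Finally, $x_+\in\spec_{\rm ess}(T)\subset\spec(T)$, and $T$ is self-adjoint and positive, being a compression of $pqp\ge0$. Hence $\norm T=\max\spec(T)\ge x_+$.

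The only point that needs care is checking that the boundary relations \eqref{Jacobi-0} and \eqref{Jacobi-1} really differ from $T_0$ by a finite-rank term, and in particular that the $c\,v_2$ and $c\,v_3$ couplings match those of $T_0$. Inspection confirms this, so I expect no real obstacle beyond quoting the spectrum of the free Jacobi matrix.
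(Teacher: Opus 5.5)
Your proposal is correct and follows the paper's proof essentially verbatim: it uses the same finite-rank decomposition $T=J_0\oplus J_1+K$ with $K$ supported on $\operatorname{span}\{v_0,v_1\}$, the same sine-transform diagonalization of the free half-line Jacobi matrix as multiplication by $d+2c\cos\theta$, and Weyl's theorem. Your final step, $\norm T\ge x_+$ from $x_+\in\spec(T)$ and $T\ge0$, is exactly the conclusion the paper leaves implicit.
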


\begin{proof}
By Proposition~\ref{prop-Jacobi}, if
$$
\mathcal{H}_r = \overline{\operatorname{span}}\{v_{2k+r}:k\ge0\}, \qquad r=0,1,
$$
then
$$
p\mathcal{H}_{\rm cyc} = \mathcal{H}_0\oplus\mathcal{H}_1.
$$
Let $J_r$ be the constant half-line Jacobi operator on $\mathcal{H}_r$ with diagonal $d$ and off-diagonal $c$ with respect to the basis $(v_{2k+r})_{k\ge0}$; explicitly,
  \begin{align*}
J_rv_r&=d\,v_r+c\,v_{r+2},\\
    J_rv_{2k+r}  &=c\,v_{2k+r-2}+d\,v_{2k+r}+c\,v_{2k+r+2}, \qquad k\ge1.
\end{align*}

Comparing these formulas with \eqref{Jacobi-0}--\eqref{Tn}, we obtain the exact decomposition
   \begin{equation}\label{equ-T-2}
T = J_0\oplus  J_1+K,
\end{equation}
where
$$
K = (t-d)\,\ip{\cdot}{v_0}v_0  + s\beta\,\ip{\cdot}{v_1}v_0  + s\beta\,\ip{\cdot}{v_0}v_1.
$$
In particular, $\Ran K\subset\operatorname{span}\{v_0,v_1\}$, so $K$ has finite rank.

Under the unitary identifications
 $$
v_{2k}\longleftrightarrow e_k, \qquad v_{2k+1}\longleftrightarrow e_k, \qquad k\ge0,
$$
both $J_0$ and $J_1$ are unitarily equivalent to the constant half-line Jacobi operator $J$ on $\ell^2(\N_0)$ given by
  \begin{align*}
Je_0&=d\,e_0+c\,e_1,\\
Je_k&=c\,e_{k-1}+d\,e_k+c\,e_{k+1}, \qquad k\ge1.
 \end{align*}
Let $\mathcal{U}:\ell^2(\N_0)\longrightarrow L^2(0,\pi)$ be the discrete sine transform
  $$
(\mathcal{U} f)(\theta)  = \sqrt{\frac{2}{\pi}} \sum_{k=0}^{\infty} f_k\sin((k+1)\theta).
$$
 Then $\mathcal{U}$ is unitary, and the identity
$$
\sin(k\theta)+\sin((k+2)\theta) = 2\cos\theta\,\sin((k+1)\theta)
  $$
gives
$$
(\mathcal{U} Jf)(\theta) = \bigl(d+2c\cos\theta\bigr)(\mathcal{U} f)(\theta).
$$
So $\mathcal{U}J\mathcal{U}^{-1}$ is multiplication by $d+2c\cos\theta$, and thus
$$
\spec(J)=[d-2c,d+2c].
$$
Hence
$$
\spec_{\rm ess} (J_0\oplus J_1) = [d-2c,d+2c].
$$
Since $K$ is finite rank, Weyl's theorem and \eqref{equ-T-2} give
$$
\spec_{\rm  ess}(T)=[d-2c,d+2c].
$$

Finally, using \eqref{equ-Jacobi-cd},
\begin{align*}
d\pm2c &= s(1-t)+t(1-s) \pm2\sqrt{s(1-s)t(1-t)}\\
&= \left( \sqrt{s(1-t)} \pm \sqrt{t(1-s)} \right)^2,
\end{align*}
  which proves \eqref{edge-formula}.
\end{proof}

We next determine the possible spectrum above $x_+$.
\begin{lemma}\label{lem-point-spec}
Let $T=pqp\big|{p\mathcal{H}_{\rm cyc}}$ and $x_+$ be as in \eqref{edge-formula}. Then
$$
\spec(T)\cap(x_+,\infty) =
\begin{cases}
\varnothing,& s+t\le1,\\
\{1\},& s+t>1.
\end{cases}
$$
In the second case, $1$ is an eigenvalue of $T$.
    \end{lemma}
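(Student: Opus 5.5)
Since $T=J_0\oplus J_1+K$ with $K$ of rank at most two supported on $\operatorname{span}\{v_0,v_1\}$ (Lemma~\ref{band-edge}), every point of $\spec(T)$ above $x_+$ is an isolated eigenvalue of finite multiplicity. The plan is to solve the eigenvalue equation explicitly. Fix $x>x_+$ and write $x=d+c(\zeta+\zeta^{-1})$ with $\zeta\in(0,1)$; this is possible because $x>d+2c$. An eigenvector $f=\sum_n f_nv_n\in\ell^2$ must satisfy the interior recursion \eqref{Tn} for $n\ge2$. Separating even and odd indices, each chain is a constant-coefficient second-order recursion with characteristic roots $\zeta,\zeta^{-1}$. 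Square-summability therefore forces
$$f_{2k}=a\zeta^{k}, \qquad f_{2k+1}=b\zeta^{k}, \qquad k\ge0.$$

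Next we impose the boundary equations \eqref{Jacobi-0} and \eqref{Jacobi-1}. Using $f_2=a\zeta$ and $f_3=b\zeta$, they become a $2\times2$ linear system:
$$(x-t-c\zeta)a=s\beta\,b,\qquad (x-d-c\zeta)b=s\beta\,a.$$
We have $x-d-c\zeta=c/\zeta$, and $x-t-c\zeta=(d-t)+c/\zeta$. A nontrivial solution exists iff the determinant vanishes:
$$\Bigl(d-t+\tfrac{c}{\zeta}\Bigr)\tfrac{c}{\zeta}=s^2\beta^2=s^2t(1-t).$$
Set $y=c/\zeta>c$, so that $x=d+y+c^2/y$. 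The equation reads $y^2+(d-t)y-s^2t(1-t)=0$, with $d-t=s(1-2t)$. This quadratic has exactly one positive root, since the product of the roots is negative. A direct computation should show this root is
$$y_*=st,$$
because
$$s^2t^2+s(1-2t)st-s^2t(1-t)=s^2t^2+s^2t-2s^2t^2-s^2t+s^2t^2=0.$$
The condition $y_*>c$ means $st>\sqrt{s(1-s)t(1-t)}$, that is, $st>(1-s)(1-t)$, which is equivalent to $s+t>1$.

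In that case the eigenvalue is
$$x=d+st+\frac{(1-s)(1-t)st}{st}=s(1-t)+t(1-s)+st+(1-s)(1-t)=1.$$
The eigenvector is built from $\zeta=c/(st)<1$ with $(a,b)$ a nontrivial solution of the system; it lies in $\ell^2$ and is nonzero. One also needs $1>x_+$, which holds because $y_*>c$ implies $y_*+c^2/y_*>2c$.

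If $s+t\le1$, then $y_*\le c$, so no $\zeta\in(0,1)$ solves the determinant equation. Hence no eigenvalue exists above $x_+$, and since the spectrum there is discrete, $\spec(T)\cap(x_+,\infty)=\varnothing$.

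The main obstacle I expect is bookkeeping rather than a conceptual difficulty. Specifically, I must verify that the boundary equations are exactly the two displayed ones, using the coefficients $t$, $s\beta$ and $d$ from \eqref{Jacobi-0}--\eqref{Jacobi-1}. I also need to check that the degenerate cases are harmless: the case $a=0$ or $b=0$, which is excluded because $s\beta\ne0$, and the uniqueness of the decaying solution when the two chains are coupled only through $v_0$ and $v_1$.
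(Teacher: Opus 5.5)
Your proposal is correct and is essentially the paper's proof. It uses the decaying ansatz on the even and odd chains, the $2\times 2$ boundary system from \eqref{Jacobi-0}--\eqref{Jacobi-1}, and the determinant condition, whose admissible solution $\zeta=c/(st)=\sqrt{(1-s)(1-t)/(st)}$ is exactly the $r$ of \eqref{r-critical}. Solving the quadratic in $y=c/\zeta$ (roots $st$ and $-s(1-t)$) is the same computation as the paper's factorization \eqref{det-factor}, and the resulting solution $b=a\sqrt{(1-t)/t}$ reproduces the eigenvector \eqref{equ-eta}.
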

\begin{proof}
By Lemma~\ref{band-edge}, every spectral point above $x_+$ is an isolated eigenvalue of finite multiplicity. Let $\lambda$ be such an eigenvalue, with
eigenvector $x=\sum_{n\ge0}x_n v_n\ne0$. The characteristic equation associated with \eqref{Tn} has two positive reciprocal roots. Since the coefficients of an eigenvector are square summable, only the root $r\in(0,1)$ can occur. Thus we have
$$
x_{2k}=\xi_{\rm e}r^k,\qquad x_{2k+1}=\xi_{\rm o}r^k\quad(k\ge0),
$$
where
\begin{equation}\label{char-r}
\lambda=d+c(r+r^{-1}).
\end{equation}
The equations at the first two coordinates give
\begin{align}
(\lambda-t-cr)\xi_{\rm e}&=s\beta\xi_{\rm o}, \label{boundary-A}\\
(\lambda-d-cr)\xi_{\rm o}&=s\beta\xi_{\rm e}. \label{boundary-B}
\end{align}
Let $D(r)=(\lambda-t-cr)(\lambda-d-cr)-s^2\beta^2$ be the determinant of this system. By \eqref{char-r}, we have
\begin{equation}\label{det-factor}
r^2D(r)=-s^2\beta^2 \left(r-\sqrt{\frac{(1-s)(1-t)}{st}}\right) \left(r+\sqrt{\frac{(1-s)t}{s(1-t)}}\right).
\end{equation}
Since $D(r)=0$ and $r\in(0,1)$, it follows that
\begin{equation}\label{r-critical}
r=\sqrt{\frac{(1-s)(1-t)}{st}},\qquad s+t>1.
\end{equation}
For this value of $r$, we have $cr=(1-s)(1-t)$ and $c/r=st$. Substituting these identities into \eqref{char-r}, we obtain $\lambda=1$.

Conversely, assume that $s+t>1$. Let $r$ be as in \eqref{r-critical} and let $\tau=\sqrt{(1-t)/t}$. Define
\begin{equation}\label{equ-eta}
 \eta=\sum_{k=0}^{\infty}r^k(v_{2k}+\tau v_{2k+1}).
\end{equation}
Since $r\in(0,1)$, this vector is square summable. Substitution shows that it satisfies the recurrence and both boundary
equations for $\lambda=1$. Hence $T\eta=\eta$. Finally, $1=d+c(r+r^{-1})>d+2c=x_+$, which proves the assertion.
\end{proof}

\begin{proposition}\label{prop-tree}
Let $s,t,p,q$ be as in \eqref{stBC}--\eqref{equ-pq}, and let $\Phi$ be defined by \eqref{equ-intro-Phi}. Then
$$
\norm{pq}=\Phi(s,t).
$$
\end{proposition}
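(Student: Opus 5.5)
By \eqref{norm-reduction} in Proposition~\ref{prop-Jacobi}, $\norm{pq}^2=\norm T$ with $T=pqp|_{p\mathcal H_{\rm cyc}}$. Since $T\ge0$, $\norm T=\max\spec(T)$. The plan is to read off $\max\spec(T)$ from Lemmas~\ref{band-edge} and~\ref{lem-point-spec} and compare it with $\Phi(s,t)^2$, splitting into the cases $s+t\le1$ and $s+t>1$.

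\emph{Case $s+t\le 1$.} Lemma~\ref{band-edge} gives $x_+\in\spec(T)$. Lemma~\ref{lem-point-spec} gives $\spec(T)\cap(x_+,\infty)=\varnothing$. Hence
$$
\norm T=x_+=\bigl(\sqrt{s(1-t)}+\sqrt{t(1-s)}\bigr)^2 ,
$$
and taking square roots gives $\norm{pq}=\sqrt{s(1-t)}+\sqrt{t(1-s)}=\Phi(s,t)$, since the roles of $a,b$ in \eqref{equ-intro-Phi} are symmetric. At the boundary $s+t=1$ we have $x_+=(s+(1-s))^2=1$, so the two branches of $\Phi$ agree there and there is no conflict.

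\emph{Case $s+t>1$.} Lemma~\ref{lem-point-spec} shows that $1$ is an eigenvalue of $T$, and that every spectral point above $x_+$ equals $1$. The Cauchy--Schwarz bound
$$
\sqrt{s(1-t)}+\sqrt{t(1-s)}\le\sqrt{(s+(1-s))((1-t)+t)}=1
$$
shows $x_+\le 1$, so the essential spectrum $[x_-,x_+]$ also lies in $[0,1]$. In any case $\norm T\le\norm{p}\norm{q}\norm{p}\le1$. Therefore $\norm T=1$ and $\norm{pq}=1=\Phi(s,t)$.

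There is no real obstacle here, because the substantive work is already done in the two spectral lemmas. The only points to check carefully are the following.
\begin{itemize}
\item The identification $\norm T=\sup\spec(T)$ for the positive operator $T$.
\item The fact that nothing in $\spec(T)$ lies above $x_+$ in the first case. This combines the essential spectrum computation of Lemma~\ref{band-edge} with the eigenvalue exclusion of Lemma~\ref{lem-point-spec}; spectrum outside the essential spectrum consists of isolated eigenvalues, and these are covered by the latter lemma.
\end{itemize}
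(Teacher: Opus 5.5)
Your proposal is correct and follows essentially the same route as the paper: reduce to $\norm T$ via \eqref{norm-reduction}, read off $\norm T=x_+$ from Lemmas~\ref{band-edge} and~\ref{lem-point-spec} when $s+t\le1$, and use the eigenvalue $1$ together with $T\le I$ when $s+t>1$. The only cosmetic difference is that you absorb $s+t=1$ into the first case via Lemma~\ref{lem-point-spec}, whereas the paper treats it separately using $x_+=1$ and $T\le I$; your Cauchy--Schwarz remark that $x_+\le1$ is harmless but unnecessary.
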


\begin{proof}
Recall that $T$ is the restriction of $pqp$ on $p\mathcal{H}_{\rm cyc}$,	we have $0\le T\le I.$ By \eqref{norm-reduction},
$$
\norm{pq}^2=\norm T.
$$

Suppose first that $s+t<1$. By Lemma~\ref{band-edge}, $x_+$ is the upper endpoint of the essential spectrum of $T$. Any spectral point of $T$ above $x_+$ is therefore an isolated eigenvalue of finite multiplicity. By Lemma~\ref{lem-point-spec}, no such eigenvalue exists. Hence
$$
\norm T=x_+.
   $$
Using \eqref{edge-formula}, we obtain
$$
\norm{pq}  = \sqrt{s(1-t)}+\sqrt{t(1-s)}.
 $$

If $s+t=1$, then \eqref{edge-formula} gives $x_+=1$. Since $x_+\in\spec_{\rm ess}(T)$ and $T\le  I$,
\begin{equation}\label{T-norm1}
\norm T=1, \qquad \norm{pq}=1.
\end{equation}

Finally, if $s+t>1$, then Lemma~\ref{lem-point-spec} gives $1\in\spec(T)$. Thus \eqref{T-norm1} also holds.

These three cases are exactly the definition of $\Phi(s,t)$.
\end{proof}

\subsection{Proof of Theorem \ref{thm-main}}
\label{proof-main}

We now combine the free-group norm formula with continuous cutoffs to prove Theorem~\ref{thm-main}. The resulting two-density obstruction gives a negative answer to the SVW density question.

  \begin{lemma}\label{lem-cutoff}
Let $m\ge2$. Let $A,C\subset\T$ be open arcs and let
    $$
  B\Subset A, \qquad D\Subset C
  $$
  be compact subarcs. Then
  \begin{equation}\label{cutoff-transfer}
  \norm{ \one_A(U_m)\one_C(V_m)|_{\Ho} } \ge \Phi(|B|,|D|).
  \end{equation}
  \end{lemma}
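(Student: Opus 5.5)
We choose continuous cutoffs sandwiched between the compact inner arcs and the open outer arcs, and use projection monotonicity to pass between the physical and the model sides. By Urysohn's lemma on $\T$, pick $\varphi,\psi\in C(\T)$ with
$$
\one_B\le\varphi\le\one_A,\qquad \one_D\le\psi\le\one_C,
$$
taking values in $[0,1]$. This is possible because $B$ is compact inside the open arc $A$, and likewise $D\Subset C$.

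\emph{Physical side.} Since $0\le\varphi\le\one_A$, functional calculus gives $\varphi(U_m)=\one_A(U_m)\varphi(U_m)$. Likewise $\psi(V_m)=\psi(V_m)\one_C(V_m)$. Both $\varphi(U_m)$ and $\psi(V_m)$ are contractions that commute with $\one_A(U_m)$ and $\one_C(V_m)$ respectively, and all of these operators preserve $\Ho$. Hence, on $\Ho$,
$$
\varphi(U_m)\psi(V_m)=\varphi(U_m)\,\one_A(U_m)\one_C(V_m)\,\psi(V_m),
$$
and therefore
$$
\norm{\varphi(U_m)\psi(V_m)|_{\Ho}}\le\norm{\one_A(U_m)\one_C(V_m)|_{\Ho}}.
$$

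\emph{Transfer.} Proposition~\ref{transfer-cont} gives
$$
\norm{\varphi(\mathbf X)\psi(\mathbf Y)}\le\norm{\varphi(U_m)\psi(V_m)|_{\Ho}}.
$$

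\emph{Free-model side.} Here we need a lower bound for $\norm{\varphi(\mathbf X)\psi(\mathbf Y)}$. Since $\varphi\ge\one_B$ and $\one_B\varphi=\one_B$, we have $\one_B(\mathbf X)\varphi(\mathbf X)=\one_B(\mathbf X)$. Similarly $\psi(\mathbf Y)\one_D(\mathbf Y)=\one_D(\mathbf Y)$. Thus
$$
\one_B(\mathbf X)\one_D(\mathbf Y)=\one_B(\mathbf X)\,\varphi(\mathbf X)\psi(\mathbf Y)\,\one_D(\mathbf Y),
$$
and so
$$
\norm{\one_B(\mathbf X)\one_D(\mathbf Y)}\le\norm{\varphi(\mathbf X)\psi(\mathbf Y)}.
$$
Proposition~\ref{prop-tree}, applied with $p=\one_B(\mathbf X)$, $q=\one_D(\mathbf Y)$, $s=|B|$ and $t=|D|$, identifies the left-hand side as $\Phi(|B|,|D|)$. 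Chaining the three inequalities yields \eqref{cutoff-transfer}.

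\emph{Degenerate cases.} Proposition~\ref{prop-tree} requires $s,t\in(0,1)$. If $|B|=0$ or $|D|=0$, then $\Phi=0$ and the bound is trivial. If $|B|=1$, then $A\supseteq B$ would force $A=\T$, which is not a proper open arc; if arcs are allowed to be all of $\T$, then $\one_A(U_m)=I$. In that case
$$
\norm{\one_C(V_m)|_{\Ho}}=1\ge\Phi,
$$
provided $\one_C(V_m)|_{\Ho}\neq0$. This holds because $V_m|_{\Ho}$ is unitarily equivalent via $\cF$ to $U_m|_{\Ho}$, whose spectral measure is absolutely continuous with full support by the change of variables $t=mx^2/2$.

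\emph{Main obstacle.} The only point needing care is the two sandwich identities: they require that $\varphi,\psi$ take values in $[0,1]$, and that functional calculi of the same unitary commute. With that in place, the rest is a direct chaining of Propositions~\ref{transfer-cont} and~\ref{prop-tree}.
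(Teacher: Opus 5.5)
Your main chain is the paper's proof: the same Urysohn cutoffs, the same sandwich identities $\varphi(U_m)\psi(V_m)=\varphi(U_m)\one_A(U_m)\one_C(V_m)\psi(V_m)$ and $pq=p\,\varphi(\mathbf X)\psi(\mathbf Y)\,q$, chained through Propositions~\ref{transfer-cont} and~\ref{prop-tree}.

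The one error is in your degenerate-case remark. You claim $\Phi=0$ when $|B|=0$, but $\Phi(0,t)=\sqrt t$; only $\Phi(0,0)=0$. So for $|B|=0<|D|<1$ the bound is not trivial. It also cannot come from the free model directly, because there $\one_B(\mathbf X)=0$. Handle it by applying the nondegenerate case to a closed subarc $B_\delta\Subset A$ of length $\delta>0$ and letting $\delta\downarrow0$, using $\Phi(\delta,t)\to\sqrt t$.

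The paper itself does not treat these boundary cases. The proof of Theorem~\ref{thm-main} only needs arcs whose lengths lie in $(0,1)$.
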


\begin{proof}
Choose $\varphi,\psi\in C(\T)$ with $0\le\varphi,\psi\le1$ such that
$$
\varphi=1\ \text{on }B, \quad \supp\varphi\subset A, \qquad \psi=1\ \text{on }D,  \quad \supp\psi\subset C.
$$
In the free model set
$$
p=\one_B(\mathbf X), \qquad q=\one_D(\mathbf Y).
$$
For every unit vector $\zeta\in\Ran q$,
$$
    \psi(\mathbf Y)\zeta=\zeta, \qquad p\varphi(\mathbf X)=p,
$$
and hence
$$
\norm{\varphi(\mathbf X)\psi(\mathbf Y)} \ge \norm{pq} = \Phi(|B|,|D|)
$$
by Proposition~\ref{prop-tree}. Proposition~\ref{transfer-cont} therefore gives
$$
 \norm{\varphi(U_m)\psi(V_m)|_{\Ho}} \ge  \Phi(|B|,|D|).
$$
Writing
$$
P=\one_A(U_m), \qquad Q=\one_C(V_m),
$$
the support conditions imply
$$
\varphi(U_m)\psi(V_m) = \varphi(U_m)PQ\psi(V_m).
$$
Since the two functional-calculus factors are contractions and preserve $\Ho$,
$$
\norm{\varphi(U_m)\psi(V_m)|_{\Ho}} \le \norm{PQ|_{\Ho}},
$$
which proves the claim.
\end{proof}

\begin{proof}[Proof of Theorem~\ref{thm-main}]
Fix $0<\eps_1,\eps_2<1$. Choose open arcs $A_j,C_j$ and compact subarcs
$$
B_j\Subset A_j, \qquad D_j\Subset C_j
$$
such that
$$
|A_j|<\eps_1, \qquad |C_j|<\eps_2,
$$
and
$$
|B_j|,|A_j|\longrightarrow\eps_1, \qquad |D_j|,|C_j|\longrightarrow\eps_2.
$$
Choose an integer $m_j\ge2$ so large that
\begin{equation}\label{scale-choice9-10}
\frac4{\sqrt{m_j\min\{\kappa,1\}}} \le \min\{ \eps_1-|A_j|, \eps_2-|C_j| \}.
\end{equation}
Set $E_j=E_{m_j,A_j},	F_j=E_{m_j,C_j}$. Thanks to \eqref{scale-choice9-10},
Corollary~\ref{cor-quadratic} shows that $E_j$ is $(\eps_1,\kappa)$-Wolff thin and $F_j$ is $(\eps_2,\kappa)$-Wolff thin. Since
$$
P_{E_j}=\one_{A_j}(U_{m_j}), \qquad Q_{F_j}=\one_{C_j}(V_{m_j}),
$$
Lemma~\ref{lem-cutoff} gives
$$
\Theta_\kappa(\eps_1,\eps_2) \ge \norm{P_{E_j}Q_{F_j}|_{\Ho}} \ge \Phi(|B_j|,|D_j|).
$$
Letting $j\to\infty$ yields
$$
\Theta_\kappa(\eps_1,\eps_2) \ge \Phi(\eps_1,\eps_2).
$$
If $\eps_1+\eps_2\ge1$, then $\Phi(\eps_1,\eps_2)=1$, while a product of two orthogonal projections has norm at most one. Thus
$$
\Theta_\kappa(\eps_1,\eps_2)=1.
$$
Taking $\eps_1=\eps_2=1/2$ gives $\eps_c(\kappa)\le1/2$.
 \end{proof}

\section{Positive bounds for arbitrary sets}
\label{positive-part}

We assume throughout that
$$
  0<\kappa\le1.
$$
We use a Fourier-complementary anti-Wick contraction with nonnegative kernel. The quadratic coordinate $t=\pi x^2$ turns the reciprocal Wolff intervals into fixed cells, allowing the kernel rows to be controlled directly by Wolff thinness.

\subsection{The anti-Wick separator}
  Let
$$
g(u)=2^{1/4}\e^{-\pi   u^2}, \qquad g_{x,\xi}(u)=\e^{2\pi i\xi(u-x/2)}g(u-x).
$$
For a bounded measurable symbol $a$, we define the anti-Wick operator by
 $$
\AW(a)f=\int_{\R^2}a(x,\xi) \langle f,g_{x,\xi}\rangle g_{x,\xi} \d x\d\xi,
 $$
where the integral is understood in the weak sense. By Moyal's identity and $\|g\|_2=1$, we have
\begin{equation}\label{equ-resolution}
\AW(1)=I.
\end{equation}
It follows that $0\le\AW(a)\le I$ whenever $0\le a\le1$. For this construction, we refer to \cite{Daubechies88,Folland89,Grochenig01}.

\begin{lemma}\label{lem-AW}
Let $\phi:\R\to[0,1]$ be a continuous even function satisfying
\begin{equation}\label{reciprocal}
\phi(r)+\phi(1/r)=1,\qquad r>0.
\end{equation}
Set
$$
a_\phi(x,\xi)=
\begin{cases}
\phi(\xi/x),&x\ne0,\\
0,&x=0,
\end{cases}
\qquad R_\phi=\AW(a_\phi).
$$
Then
\begin{align}\label{Fourier-complement}
0\le R_\phi\le I, \qquad \cF R_\phi\cF^{-1}=I-R_\phi.
\end{align}
If, in addition,
$$
w_\phi(u)=\int_\R\e^{2\pi  iru}\phi(r)\d r
$$
is integrable and nonnegative, then $R_\phi$ has the nonnegative symmetric kernel
\begin{equation}\label{kernel2pi}
K_\phi(y,z) = \int_\R g(y-x)g(z-x)|x|\, w_\phi\bigl(|x|(y-z)\bigr) \d x.
\end{equation}
\end{lemma}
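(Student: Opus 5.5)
The plan is to get \eqref{Fourier-complement} from the Fourier covariance of the anti-Wick quantization, and \eqref{kernel2pi} from a direct computation of the weak integral, with one regularization step.

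\textbf{Positivity.} This is immediate. Since $0\le\phi\le1$, the symbol satisfies $0\le a_\phi\le 1$, and the remark after \eqref{equ-resolution} gives $0\le R_\phi\le I$.

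\textbf{Fourier covariance of the packets.} I will first check that the phase convention in $g_{x,\xi}$ gives an exact identity, with no stray unimodular factor. The Gaussian $g$ is Fourier invariant in the convention \eqref{Fourier}. Hence
$$
\cF\bigl[g(\cdot-x)\e^{2\pi\i\xi\cdot}\bigr](\eta)=\e^{-2\pi\i x(\eta-\xi)}g(\eta-\xi).
$$
Multiplying by $\e^{-\pi\i x\xi}$ gives
$$
\cF g_{x,\xi}(\eta)=\e^{-2\pi\i x(\eta-\xi/2)}g(\eta-\xi)=g_{\xi,-x}(\eta).
$$
Inserting this into the weak definition, and using unitarity of $\cF$, gives
$$
\cF\,\AW(a)\,\cF^{-1}f=\int a(x,\xi)\,\langle f,g_{\xi,-x}\rangle\, g_{\xi,-x}\d x\d\xi .
$$
The measure-preserving change of variables $(y,\eta)=(\xi,-x)$ turns this into $\AW(b)$ with $b(y,\eta)=a(-\eta,y)$.

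\textbf{The complementarity relation.} For $y,\eta\ne0$, evenness of $\phi$ and \eqref{reciprocal} give
$$
b(y,\eta)=\phi(-y/\eta)=\phi(y/\eta)=1-\phi(\eta/y)=1-a_\phi(y,\eta).
$$
The exceptional lines $\{y=0\}\cup\{\eta=0\}$ form a null set, so they do not affect the anti-Wick operator. Then \eqref{equ-resolution} gives $\cF R_\phi\cF^{-1}=\AW(1-a_\phi)=I-R_\phi$.

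\textbf{The kernel formula.} For Schwartz $f,h$, expand the integrand of $\langle R_\phi f,h\rangle$:
$$
a_\phi(x,\xi)\langle f,g_{x,\xi}\rangle\langle g_{x,\xi},h\rangle
=\phi(\xi/x)\iint f(z)\overline{h(y)}\,g(z-x)g(y-x)\,\e^{2\pi\i\xi(y-z)}\d z\d y .
$$
The $\xi$-integral is not absolutely convergent, so I will insert a damping factor $\e^{-\epsilon\pi\xi^2}$ first.

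For fixed $x\ne0$ I substitute $\xi=xr$, so that $\d\xi=|x|\d r$. The $\xi$-integral becomes $|x|\int\phi(r)\e^{2\pi\i xr(y-z)}\e^{-\epsilon\pi x^2r^2}\d r$. As $\epsilon\downarrow0$ this is $|x|\,(w_\phi*\text{Gaussian}_\epsilon)(x(y-z))$, which converges to $|x|w_\phi(x(y-z))$ because $w_\phi\in L^1$. Since $\phi$ is even, $w_\phi$ is even, so this equals $|x|\,w_\phi(|x|(y-z))$.

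The damped symbols converge boundedly and pointwise to $a_\phi$. So the regularized quadratic forms converge to $\langle R_\phi f,h\rangle$, by dominated convergence in the phase-space integral: the integrand is bounded by $|\langle f,g_{x,\xi}\rangle\langle g_{x,\xi},h\rangle|$, which is integrable by Moyal.

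On the kernel side I need a dominating function. I will bound $|x|\,|w_\phi*G_\epsilon|(x(y-z))$ together with the Gaussian factors $g(y-x)g(z-x)$ and Schwartz $f,h$. The crude bound $|w_\phi*G_\epsilon|\le\|\phi\|_\infty$-type estimates are not enough. I will instead use $|x|\int|w_\phi(x(y-z)-s)|\,G_\epsilon(s)\d s$ and integrate in $y$ first. This gives $\int|x||w_\phi(xu)|\d u=\|w_\phi\|_1$, uniformly in $\epsilon$ and $x$. So the expression is dominated in $L^1$, and the limits may be exchanged.

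This identifies $K_\phi$ as in \eqref{kernel2pi}. Nonnegativity follows from $w_\phi\ge0$ and $g>0$, and symmetry in $(y,z)$ follows from evenness of $w_\phi$.

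\textbf{Main obstacle.} The main obstacle is exactly this justification of Fubini in the kernel computation. The algebraic identities themselves are routine.
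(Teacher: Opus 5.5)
Your proposal is correct and follows essentially the same route as the paper: positivity from $\AW(1)=I$, the covariance $\cF g_{x,\xi}=g_{\xi,-x}$ with the change of variables $(x,\xi)\mapsto(\xi,-x)$ and the a.e.\ identity $a_\phi\circ S^{-1}=1-a_\phi$, and the kernel via the substitution $\xi=xr$ together with evenness of $w_\phi$. The Gaussian damping is harmless but unnecessary, and your claim that the $\xi$-integral is not absolutely convergent is inaccurate: since $w_\phi$ is defined by a Lebesgue integral, $\phi\in L^1$, so $\int a_\phi(x,\xi)\,\mathrm{d}\xi=|x|\,\|\phi\|_1$ and the full four-variable integrand against Schwartz $f,h$ is absolutely integrable, which lets Fubini apply directly without any regularization.
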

\begin{proof}
Since $0\le a_\phi\le1$, \eqref{equ-resolution} gives $0\le R_\phi\le I$. Since $\cF g_{x,\xi}=g_{\xi,-x}$, changing variables by $(x,\xi)\mapsto(\xi,-x)$ gives
$$
\cF R_\phi\cF^{-1}=\AW(a_\phi\circ  S^{-1}), \qquad  S(x,\xi)=(\xi,-x).
$$
By the evenness of $\phi$ and \eqref{reciprocal}, we have $a_\phi\circ S^{-1}=1-a_\phi$ almost
everywhere. Combining this identity with \eqref{equ-resolution}, we obtain \eqref{Fourier-complement}.

 We next compute the kernel. Observe that $g_{x,\xi}(y)\overline{g_{x,\xi}(z)} =\e^{2\pi i\xi(y-z)}g(y-x)g(z-x)$. For $x\ne0$, a change of variables and the evenness of $w_\phi$ give
 $$
 \int_\R\e^{2\pi i\xi(y-z)}\phi(\xi/x)\d\xi =|x|w_\phi\bigl(x(y-z)\bigr) =|x|w_\phi\bigl(|x|(y-z)\bigr).
$$
 Substituting this identity into the definition of $R_\phi$, we obtain \eqref{kernel2pi}. Since $g,w_\phi$ are nonnegative and $w_\phi$ is even, this kernel is nonnegative and symmetric. This completes the proof.
 \end{proof}

In the sequel, we apply Lemma~\ref{lem-AW} with
$$
\phi_0(r)=\frac1{1+r^2},  \qquad w_0(u)=\pi\e^{-2\pi|u|},
$$
and set
$$
R_{\rm cell}=R_{\phi_0}, \qquad K_{\rm cell}=K_{\phi_0}.
$$
Since $0\le\phi_0\le1$,
$$
\phi_0(r)+\phi_0(1/r)=1,
$$
 and $w_0$ is nonnegative and integrable, we have
$$
0\le R_{\rm cell}\le I, \qquad \cF R_{\rm cell}\cF^{-1}=I-R_{\rm cell},
 $$
and $R_{\rm cell}$ has the nonnegative symmetric kernel
\begin{equation}\label{Kcell}
K_{\rm cell}(y,z) = \sqrt{2}\,\pi \int_{\R} |x|\, \e^{-\pi[(y-x)^2+(z-x)^2]-2\pi|x||y-z|} \d x.
\end{equation}

\subsection{Quadratic straightening and row estimates}
 The purpose of this subsection is to convert Wolff thinness into a uniform row bound for the nonnegative
 kernel $K_{\rm cell}$. There are two main steps. First, the
 quadratic coordinate $t=\pi z^2$ turns the reciprocal Wolff intervals on each ray into intervals of one fixed length. Second, in this coordinate a row of $K_{\rm cell}$ splits into a unimodal principal profile and a decreasing remainder, both
 of which can be controlled by the resulting fixed-cell density condition.

In this subsection, let $E$ be an $(\eps,\kappa)$-Wolff thin set, where $0<\eps\le1/2$, and put
\begin{align}\label{equ-low-1}
\lambda=2\pi\kappa.
\end{align}

 On the positive and negative rays define
 \begin{align}\label{mu-eta}
 \mathrm{d}\mu(t)=\frac{\mathrm{d}t}{2\sqrt{\pi t}}, \qquad \eta_\pm(t) =
\mathbf{1}_E\!\left(\pm\sqrt{\frac t\pi}\right), \qquad   t>0.
 \end{align}

\begin{lemma}\label{straightening}
For every $a\ge0$,
\begin{equation}\label{ray-thin}
\int_a^{a+2\lambda}\eta_\pm\d\mu \le \eps\mu([a,a+2\lambda]).
\end{equation}
\end{lemma}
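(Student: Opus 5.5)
The plan is to undo the quadratic change of variables and recognize the resulting $x$-interval as exactly one Wolff interval $I_c^{(\kappa)}$. On the positive ray, substitute $x=\sqrt{t/\pi}$, so that $t=\pi x^2$ and $\d x=\d t/(2\sqrt{\pi t})=\d\mu(t)$. Put $x_0=\sqrt{a/\pi}$ and $x_1=\sqrt{(a+2\lambda)/\pi}$. Then
$$
\int_a^{a+2\lambda}\eta_+\d\mu=|E\cap[x_0,x_1]|,\qquad \mu([a,a+2\lambda])=x_1-x_0 .
$$
By \eqref{equ-low-1}, the endpoints satisfy $x_1^2-x_0^2=2\lambda/\pi=4\kappa$. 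So \eqref{ray-thin} for $\eta_+$ reduces to showing $|E\cap[x_0,x_1]|\le\eps(x_1-x_0)$ for every interval $[x_0,x_1]\subset[0,\infty)$ with $x_1^2-x_0^2=4\kappa$.

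Next I match this interval with a Wolff interval. For $c>0$ with $c\ge\kappa$ we have $\rho_\kappa(c)=\kappa/c$, so $I_c^{(\kappa)}=(c-\kappa/c,\,c+\kappa/c)$. Its endpoints satisfy $(c+\kappa/c)^2-(c-\kappa/c)^2=4\kappa$, which is the same invariant. I therefore choose $c$ to be the positive root of $c-\kappa/c=x_0$, namely
$$
c=\frac{x_0+\sqrt{x_0^2+4\kappa}}{2}.
$$
Since $c$ and $c+\kappa/c$ are both positive, the identity above forces $c+\kappa/c=\sqrt{x_0^2+4\kappa}=x_1$. The remaining condition is the one where the hypothesis $0<\kappa\le1$ enters: $c\ge\sqrt{\kappa}\ge\kappa$, so we are in the reciprocal regime of $\rho_\kappa$ and not the unit-scale regime. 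Hence $[x_0,x_1]$ coincides with $I_c^{(\kappa)}$ up to its two endpoints, which have measure zero. The $(\eps,\kappa)$-Wolff thinness of $E$ then gives $|E\cap[x_0,x_1]|\le2\eps\rho_\kappa(c)=\eps(x_1-x_0)$.

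The negative ray is handled the same way: $\eta_-(t)=\one_E(-\sqrt{t/\pi})$ transforms to $|E\cap[-x_1,-x_0]|$, and $[-x_1,-x_0]$ is $I_{-c}^{(\kappa)}$ up to endpoints, because $\rho_\kappa$ is even. There is no genuine obstacle in this argument. The one point needing care is confirming that the matching center satisfies $c\ge\kappa$, and this is exactly where $\kappa\le1$ is used. The assumption $\eps\le1/2$ is not needed for this lemma.
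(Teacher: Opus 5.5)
Your proposal is correct and is essentially the paper's proof. The paper likewise takes the unique $u\ge\sqrt\kappa$ with $a=\pi(u-\kappa/u)^2$ (your explicit root $c$) and uses $\kappa\le1$ to get $\rho_\kappa(u)=\kappa/u$. It then observes that $t=\pi z^2$ maps $I_u^{(\kappa)}$ onto $[a,a+2\lambda]$, with $\d\mu$ pulling back to Lebesgue measure, and handles the negative ray by reflection.
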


\begin{proof}
For every $a\ge0$ there is a unique $u\ge\sqrt\kappa$ such that
$$
a=\pi\left(u-\frac\kappa u\right)^2.
$$
 Since $0<\kappa\le1$ and $u\ge\sqrt\kappa$, $\rho_\kappa(u)=\frac{\kappa}{u}$, so
$$
I_u^{(\kappa)} = \left( u-\frac{\kappa}{u}, u+\frac{\kappa}{u} \right) \subset[0,\infty).
$$
Under $t=\pi z^2$, its image has length
$$
\pi\bigl(u+\rho_\kappa(u)\bigr)^2 - \pi\bigl(u-\rho_\kappa(u)\bigr)^2 = 4\pi u\rho_\kappa(u) = 4\pi\kappa =  2\lambda.
$$
Hence $I_u^{(\kappa)}$ is mapped exactly onto $[a,a+2\lambda]$. By \eqref{mu-eta}
and the $(\eps,\kappa)$-Wolff thinness of $E$, we have
$$
\int_a^{a+2\lambda}\eta_+(t)\d\mu(t) = \int_{I_u^{(\kappa)}}\mathbf{1}_E(z)\d z = |E\cap I_u^{(\kappa)}| \le \eps |I_u^{(\kappa)}| = \eps\mu([a,a+2\lambda]).
$$
The estimate for $\eta_-$ follows by reflection.
\end{proof}

Thus, from this point on, the geometry of $E$ enters only through the fixed-cell estimate \eqref{ray-thin}. We next record the two one-dimensional consequences of that estimate that will be used for the kernel rows.

\begin{lemma}\label{lem-profile}
Let $\lambda$ and $\mathrm{d}\mu$ be given by \eqref{equ-low-1}-\eqref{mu-eta}. Let $0\le\eta\le1$ satisfy
\begin{equation}\label{cell-assumption}
\int_a^{a+2\lambda}\eta\d\mu \le \eps\mu([a,a+2\lambda]) \qquad(a\ge0).
\end{equation}
Then the following hold.

\begin{enumerate}
\item[(i)] If $h$ is nonnegative, decreasing, and integrable with respect to $\mu$, then
\begin{equation}\label{eq-profile-1}
\int_0^\infty \eta h\d\mu \le \eps\int_0^\infty h\d\mu + \eps(1-\eps) \sqrt{\frac{2\lambda}{\pi}}\,h(0).
\end{equation}

\item[(ii)] For $T\ge0$, set
\begin{equation}\label{unimodal}
k_T(t) = \sqrt\pi\min\{\sqrt T,\sqrt t\}\,\e^{-|T-t|}, \qquad t\ge0.
\end{equation}
Then
\begin{equation}\label{eq-profile-3}
\int_0^\infty \eta k_T\d\mu \le \eps\int_0^\infty k_T\d\mu + \eps(1-\eps)
\begin{cases}
\sqrt{2T\lambda},&T\le\lambda,\\[1mm]
\sqrt2\,\lambda,&T\ge\lambda.
\end{cases}
\end{equation}
\end{enumerate}
   \end{lemma}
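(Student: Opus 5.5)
The plan is to reduce both parts to estimates for intervals by a layer-cake decomposition, and then control each interval by tiling it with cells of length $2\lambda$, so that at most one incomplete cell contributes an error. Throughout, write $M=\sqrt{2\lambda/\pi}$. Since $\mu$ has decreasing density $(2\sqrt{\pi t})^{-1}$, every interval of $t$-length $2\lambda$ satisfies $\mu([a,a+2\lambda])\le\mu([0,2\lambda])=M$.

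\emph{Part (i).} First I prove the interval bound
$$
\int_0^L\eta\d\mu\le \eps\mu([0,L])+\eps(1-\eps)M \qquad (L\ge0).
$$
I tile $[0,L]$ from the right by the cells $[L-2(j+1)\lambda,\,L-2j\lambda]$, using \eqref{cell-assumption} on each full cell. This leaves a left remainder $[0,r]$ with $r<2\lambda$. Put $x=\mu([0,r])\le M$. On the remainder I use two bounds: $\int_0^r\eta\d\mu\le x$, and $\int_0^r\eta\d\mu\le\int_0^{2\lambda}\eta\d\mu\le\eps M$. Hence the excess over $\eps x$ is at most $\min\{x,\eps M\}-\eps x$. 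If $x\le\eps M$, this excess is $(1-\eps)x\le\eps(1-\eps)M$. If $x>\eps M$, it is $\eps(M-x)<\eps(1-\eps)M$. In both cases the interval bound follows.

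Next I write the decreasing function $h$ as $h=\int_0^{h(0)}\one_{[0,\ell(s))}\d s$, where $\ell(s)=\sup\{t:h(t)>s\}$. Integrating the interval bound over $s\in[0,h(0)]$ then gives \eqref{eq-profile-1}, by Fubini.

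\emph{Part (ii).} The function $k_T$ increases on $[0,T]$ and decreases on $[T,\infty)$, with maximum $k_T(T)=\sqrt{\pi T}$. So for $0<s<\sqrt{\pi T}$ its superlevel sets are intervals $[\alpha(s),\beta(s)]\ni T$, where $\alpha$ is the inverse of $k_T|_{[0,T]}$.

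For each such interval I tile from the right endpoint $\beta$ by full cells, which leaves a remainder $[\alpha,\alpha+r]$ with $r<2\lambda$. This remainder sits inside the admissible cell $[\alpha,\alpha+2\lambda]$. The same $\min$-argument as in part (i) then gives
$$
\int_\alpha^\beta\eta\d\mu\le\eps\mu([\alpha,\beta])+\eps(1-\eps)\,\mu([\alpha,\alpha+2\lambda]).
$$
Integrating over $s$ yields \eqref{eq-profile-3}, with error term
$$
\eps(1-\eps)\int_0^{\sqrt{\pi T}}\mu([\alpha(s),\alpha(s)+2\lambda])\d s .
$$
When $T\le\lambda$, I bound the integrand crudely by $M$, which gives $M\sqrt{\pi T}=\sqrt{2T\lambda}$, as required.

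When $T\ge\lambda$, I change variables $s=k_T(\alpha)=\sqrt{\pi\alpha}\,\e^{\alpha-T}$. The error integral becomes
$$
\int_0^T\bigl(\sqrt{\alpha+2\lambda}-\sqrt\alpha\bigr)\,\e^{\alpha-T}\Bigl(\frac1{2\sqrt\alpha}+\sqrt\alpha\Bigr)\d\alpha .
$$
I split it at $\alpha\sim\lambda$. For small $\alpha$ I use $\sqrt{\alpha+2\lambda}-\sqrt\alpha\le\sqrt{2\lambda}$. For large $\alpha$ I use $\sqrt{\alpha+2\lambda}-\sqrt\alpha\le\lambda/\sqrt\alpha$, so the integrand is at most $\lambda\,\e^{\alpha-T}(1+1/(2\alpha))$. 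The exponential weight $\e^{\alpha-T}$ integrates to at most $1$.

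\emph{Main obstacle.} The key difficulty is making this last computation come out at exactly $\sqrt2\,\lambda$ rather than some larger constant times $\lambda$. The $1/\alpha$ term produces a logarithm, which must be absorbed using $\e^{\alpha-T}$ together with $\lambda\le\kappa$-smallness. If the crude split loses too much, I would first try to monotonize instead: bound $\mu([\alpha,\alpha+2\lambda])$ by $\sqrt2\,\lambda/\sqrt{\pi\max\{\alpha,\lambda\}}$, and integrate that against $\d k_T(\alpha)$ directly.
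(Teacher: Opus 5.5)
Your part (i) is correct but follows a different route from the paper. You prove the interval bound $\int_0^L\eta\d\mu\le\eps\mu([0,L])+\eps(1-\eps)M$ by tiling from $L$ and treating the leftover piece with $\min\{x,\eps M\}-\eps x\le\eps(1-\eps)M$, and then integrate over the level sets of $h$. The paper instead works cell by cell on the fixed lattice $[2j\lambda,2(j+1)\lambda]$ and telescopes the oscillations $h(2j\lambda)-h(2(j+1)\lambda)$. Both give the same constant. Your per-level estimate $\int_\alpha^\beta\eta\d\mu\le\eps\mu([\alpha,\beta])+\eps(1-\eps)\mu([\alpha,\alpha+2\lambda])$ in part (ii) is also correct, and so is the case $T\le\lambda$.

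The gap is the case $T\ge\lambda$ of part (ii). It cannot be closed by a sharper computation, because your error integral is genuinely larger than $\sqrt2\,\lambda$. Since $\frac{\sqrt{\alpha+2\lambda}-\sqrt\alpha}{2\sqrt\alpha}=\frac{\lambda}{\sqrt\alpha\,(\sqrt{\alpha+2\lambda}+\sqrt\alpha)}\ge\frac{\lambda}{2(\alpha+2\lambda)}$, at $T=1$ you get
\[
\int_0^1\bigl(\sqrt{\alpha+2\lambda}-\sqrt\alpha\bigr)\e^{\alpha-1}\Bigl(\frac1{2\sqrt\alpha}+\sqrt\alpha\Bigr)\d\alpha\ \ge\ \frac{\lambda}{2\e}\int_0^1\frac{\d\alpha}{\alpha+2\lambda}=\frac{\lambda}{2\e}\log\frac{1+2\lambda}{2\lambda}.
\]
This exceeds $\sqrt2\,\lambda$ already at $\lambda=10^{-4}$, which is admissible since $\lambda=2\pi\kappa$ with arbitrary $\kappa\in(0,1]$. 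So small $\lambda$ makes the logarithm worse rather than absorbing it. Your monotonized fallback dominates the same integrand pointwise, so it fails too.

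The defect is structural. Each level set $[\alpha(s),\beta(s)]$ pays for its own incomplete cell at its left endpoint. These endpoints sweep through $[0,T]$, where $\d\mu$ is heavy, and summing worst cases level by level overcounts. The missing idea is to let all level sets share one lattice of cells $a+2\lambda\Z$ and fold every boundary error into a single cell $I\ni T$.

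The paper does this by integrating by parts against $D(u)=\eps\mu([0,u])-\int_0^u\eta\d\mu$, which satisfies $D(u+2\lambda)\ge D(u)$, and by periodizing $S(s)=\sum_jk_T(s+2j\lambda)$. This reduces everything to the single-cell term $\int_I(\eta-\eps)(S-S(a))\d\mu$. That term is then bounded by $\eps(1-\eps)\sqrt{\pi T}\,\mu(I)$, using $\operatorname{osc}S\le\sqrt{\pi T}$ and a phase $a$ chosen separately for $T<\lambda$, $T\ge3\lambda$, and $\lambda\le T\le3\lambda$.

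Within your framework, you could instead put the incomplete cell at the right end, in $[\beta(s)-2\lambda,\beta(s)]$. This gives at most $\sqrt T(\sqrt T-\sqrt{T-2\lambda})\le\sqrt2\,\lambda$ once $T\ge(1+\sqrt2)\lambda$, mirroring the paper's case $T\ge3\lambda$. The range $\lambda\le T<(1+\sqrt2)\lambda$ would still need its own argument.
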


\begin{proof}
$(i)$ Partition $[0,\infty)$ into
$$
  J_j=[2j\lambda,2(j+1)\lambda],  \qquad j\ge0,
$$
and write
$$
h_j=h(2(j+1)\lambda), \qquad H_j=h(2j\lambda)-h_j.
$$
Since $h$ is decreasing, $0\le h-h_j\le H_j$ on $J_j$. Moreover, \eqref{cell-assumption} gives
$$
 \int_{J_j}(\eta-\eps)\d\mu\le0.
$$
Therefore
$$
\begin{aligned}
\int_{J_j}(\eta-\eps)h\d\mu \le \int_{J_j}(\eta-\eps)(h-h_j)\d\mu\le (1-\eps)H_j\int_{J_j}\eta\d\mu\le \eps(1-\eps)\mu(J_j)H_j.
\end{aligned}
$$
Since $\mu(J_j)\le\mu([0,2\lambda]) =\sqrt{\frac{2\lambda}{\pi}}$,	summing over $j$
and using the telescoping bound
  $$
 \sum_{j\ge0}H_j\le h(0)
$$
yields
  $$
\int_0^\infty(\eta-\eps)h\d\mu \le \eps(1-\eps) \sqrt{\frac{2\lambda}{\pi}}\,h(0).
$$
Rearranging gives \eqref{eq-profile-1}.

\noindent
$(ii)$ The case $T=0$ is trivial. Assume $T>0$ and set
$$
L=2\lambda, \qquad K_T=\max k_T=\sqrt{\pi T}.
$$
Extend $k_T$ by zero to $(-\infty,0)$.

\smallskip
\noindent
\emph{Step 1: Reduction to one cell.} Define
$$
D(u)   = \eps\mu([0,u]) - \int_0^u\eta\d\mu.
$$
By \eqref{cell-assumption},
$$
D(u+L)\ge D(u), \qquad D'(u)=\frac{\eps-\eta(u)}{2\sqrt{\pi u}}.
$$
Integration by parts gives
\begin{equation}\label{profile-ibp}
\int_0^\infty\eta k_T\d\mu  = \eps\int_0^\infty k_T\d\mu + \int_0^\infty k_T'(u)D(u)\d u,
\end{equation}
where the boundary terms vanish since $D(u)=O(\sqrt u)$ at the origin and $k_T$
decays exponentially at infinity.

 Let $I=[a,a+L]$ be a length-$L$ cell containing $T$, and set
$$
S(s)=\sum_{j\in\Z}k_T(s+jL), \qquad s\in I.
$$
For almost every $s\in I$, $k_T'(s+jL)\ge0$ when $s+jL<T$ and $k_T'(s+jL)\le0$ when
$s+jL>T$. Since $D(s+jL)\le D(s)$ on the left of $T$ and $D(s+jL)\ge D(s)$ on the right,
$$
\sum_{j\in\Z} k_T'(s+jL)D(s+jL) \le D(s)S'(s).
$$
Integrating over $I$ and using the periodicity $S(a+L)=S(a)$, we obtain
\begin{equation}\label{equ-cell-01}
 \int_0^\infty\eta k_T\d\mu \le \eps\int_0^\infty  k_T\d\mu + \int_I(\eta-\eps)\bigl(S-S(a)\bigr)\d\mu.
\end{equation}
Moreover, the zero extension of $k_T$ increases from $0$ to $K_T$ and then decreases back to $0$, so
$$
\operatorname{Var}_{\R}(k_T)=2K_T.
 $$
    Since the intervals $I+jL$, $j\in\Z$, partition $\R$, the periodization satisfies
$$
\operatorname{Var}_{I}(S) \le \sum_{j\in\Z} \operatorname{Var}_{I+jL}(k_T) = 2K_T.
$$
As $S$ is $L$-periodic, a full period contains both a passage from $\min S$ to $\max S$ and a return from $\max S$ to $\min S$; hence $2\operatorname{osc}S \le   \operatorname{Var}_{I}(S)$. Therefore
    \begin{equation}\label{osc-period}
\operatorname{osc}S\le K_T.
\end{equation}

\smallskip
\noindent
\emph{Step 2: The ranges $T<\lambda$ and $T\ge3\lambda$.} Suppose first that $T<\lambda$. Since $S$ is decreasing on $(T,L)$ and
$S(L)=S(0)$, we may choose $a\in[0,T]$ such that $S(a)=\min S$. Thus
$$
0\le S-S(a)\le\operatorname{osc}S \qquad\text{on }I,
$$
and \eqref{cell-assumption} gives
$$
\begin{aligned}
  \int_I(\eta-\eps)\bigl(S-S(a)\bigr)\d\mu \le (1-\eps)\operatorname{osc}S \int_I\eta\d\mu\le \eps(1-\eps) \mu(I)\operatorname{osc}S.
\end{aligned}
$$
  Since $a\ge0$,
$$
   \mu(I)\operatorname{osc}S \le \mu([0,L])K_T = \sqrt{\frac{2\lambda}{\pi}}\,K_T = \sqrt{2T\lambda}.
$$
Together with \eqref{equ-cell-01}, this proves the first bound in \eqref{eq-profile-3}.

  Now suppose that $T\ge3\lambda$. Choose $a\in[T-L,T]$ with $S(a)=\min S$. The same argument gives
 $$
 \int_I(\eta-\eps)\bigl(S-S(a)\bigr)\d\mu \le \eps(1-\eps) \mu(I)\operatorname{osc}S.
$$
 Since $\mu$ has decreasing density,
 $$
 \begin{aligned}
 \mu(I)\operatorname{osc}S \le K_T\mu([T-2\lambda,T])= \frac{2\lambda} {1+\sqrt{1-2\lambda/T}} < \sqrt2\,\lambda.
 \end{aligned}
 $$
 This proves the second bound in this range.

\noindent
\emph{Step 3: The transition range $\lambda\le T\le3\lambda$.} Take the centered cell
$$
    I=[T-\lambda,T+\lambda], \qquad a=T-\lambda.
$$
We claim that
\begin{equation}\label{equ-phase-1}
S(a)-\min S<\frac{K_T}{2}.
\end{equation}
Since $S$ is $L$-periodic, it is enough to compare $S(a)$ with $S(T-r)$ for $0\le r\le L$. From the definition of $k_T$,
\begin{equation}\label{phase-profile}
   \frac{S(T-r)}{K_T} = \sum_{j\ge0} \e^{-(r+jL)} \sqrt{\left(1-\frac{r+jL}{T}\right)_+} + \frac{\e^{-(L-r)}}{1-\e^{-L}}.
\end{equation}

  For $0\le r\le\lambda$, the sum in \eqref{phase-profile} is decreasing in $r$, and therefore
  $$
  S(a)-S(T-r) \le \frac{K_T(\e^{-\lambda}-\e^{-2\lambda})} {1-\e^{-2\lambda}} = \frac{K_T}{1+\e^\lambda} < \frac{K_T}{2}.
    $$

It remains to consider $r=\lambda+q$, $0\le q\le\lambda$. Set
$$
\alpha=\frac{T-\lambda}{\lambda}\in[0,2], \qquad x=\frac q\lambda\in[0,1].
$$
Since $T\le3\lambda$, \eqref{phase-profile} yields
 \begin{equation}\label{phase-diff}
\frac{S(a)-S(T-\lambda-q)}{K_T} = \e^{-\lambda} \left[ \sqrt{\frac{\alpha}{1+\alpha}} - \e^{-q}\sqrt{\frac{(\alpha-x)_+}{1+\alpha}} - \frac{\e^q-1}{1-\e^{-2\lambda}} \right].
\end{equation}
If $\lambda\ge1/2$, the right-hand side is bounded by
  $$
\e^{-1/2}\sqrt{\frac23}<\frac12.
$$
For $0<\lambda<1/2$, using
$$
\e^{-q}\ge1-q, \qquad \e^q-1\ge q, \qquad \frac{\lambda}{1-\e^{-2\lambda}} \ge\frac{1+\lambda}{2},
$$
  we obtain from \eqref{phase-diff}
\begin{equation}\label{equ-phase-3}
\frac{S(a)-S(T-\lambda-q)}{K_T} \le \e^{-\lambda} F_\alpha(x), \qquad
    F_\alpha(x) = \sqrt{\frac{\alpha}{1+\alpha}} - \sqrt{\frac{(\alpha-x)_+}{1+\alpha}} - \frac{x}{3}.
\end{equation}
An elementary endpoint argument gives
\begin{equation}\label{equ-phase-3.5}
F_\alpha(x)<\frac12, \qquad 0\le\alpha\le2,\quad 0\le x\le1.
\end{equation}
Indeed, for $x\ge\alpha$ one uses
$$
F_\alpha(x) \le \sqrt{\frac{\alpha}{1+\alpha}}-\frac{\alpha}{3}<\frac12,
$$
   while for $x\le\alpha$ the function $x\mapsto F_\alpha(x)$ is convex, so it suffices to check the relevant endpoints
$x=0,\alpha,1$. This proves \eqref{equ-phase-1}.

We now return directly to the last term in \eqref{equ-cell-01}. Put
 $$
   \delta=S(a)-\min S.
 $$
 By \eqref{osc-period} and \eqref{equ-phase-1},
 $$
0\le S-\min S\le K_T, \qquad  0\le\delta<\frac{K_T}{2}\le(1-\eps)K_T.
 $$
 Since $S-S(a)=(S-\min S)-\delta$,	the cell condition \eqref{cell-assumption} gives
 \begin{align}
 \int_I(\eta-\eps)(S-S(a))\d\mu &\le (1-\eps)K_T\int_I\eta\d\mu + \delta\left( \eps\mu(I)-\int_I\eta\d\mu \right) \nonumber\\
 &\le \eps(1-\eps)K_T\mu(I). \label{cell-est}
 \end{align}
 Finally, writing $R=T/\lambda\ge1$,
 \begin{equation}\label{cell-mass}
   K_T\mu(I) = \lambda \frac{2\sqrt R} {\sqrt{R+1}+\sqrt{R-1}} \le \sqrt2\,\lambda.
 \end{equation}
 Combining \eqref{equ-cell-01}, \eqref{cell-est}, and \eqref{cell-mass} gives
 $$
  \int_0^\infty\eta k_T\d\mu \le \eps\int_0^\infty k_T\d\mu + \eps(1-\eps)\sqrt2\,\lambda,
 $$
as required.
 \end{proof}

  We can now estimate a complete row of $K_{\rm cell}$.

\begin{proposition}\label{row-estimate9-10}
For $y\in\R$, put $T=\pi y^2$ and
$$
m(T)  := \int_{\R}K_{\rm cell}(y,z)\d z.
$$
Then
\begin{equation}\label{row-mass}
\frac12\le m(T)<1.
\end{equation}
Moreover,
\begin{align}
\int_EK_{\rm cell}(y,z)\d z &< \eps m(T) + \eps(1-\eps) \bigl(2(1-m(T))+5\lambda m(T)\bigr),
  &&0<\lambda\le1, \label{equ-row-small}\\
\int_EK_{\rm cell}(y,z)\d z &< \eps m(T) + 3\lambda\eps(1-\eps), &&\lambda\ge1. \label{row-large}
\end{align}
\end{proposition}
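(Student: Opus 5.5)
The plan is to pass to the quadratic coordinate $t=\pi z^2$ on each ray, in which a row of $K_{\rm cell}$ becomes, up to controlled errors, the unimodal profile $k_T$ of Lemma~\ref{lem-profile}(ii) plus a decreasing remainder. The density information then enters only through Lemma~\ref{straightening} and the two parts of Lemma~\ref{lem-profile}.

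\emph{Algebraic normal form of the kernel.} I start from \eqref{Kcell} and complete the square in the Gaussian factor:
$$
\pi(y-x)^2+\pi(z-x)^2=2\pi\Bigl(x-\tfrac{y+z}{2}\Bigr)^2+\tfrac{\pi}{2}(y-z)^2 .
$$
At the saddle point $x=(y+z)/2$ the oscillation-free factor becomes $2\pi\,\bigl|\tfrac{y+z}{2}\bigr|\,|y-z|=\pi|y^2-z^2|=|T-t|$, where $t=\pi z^2$. So I will write
$$
K_{\rm cell}(y,z)=\sqrt2\,\pi\,\e^{-\frac\pi2(y-z)^2}\int_\R|x|\,\e^{-2\pi(x-\frac{y+z}2)^2-2\pi|x||y-z|}\d x .
$$
I then split the $x$-integral according to whether $x$ has the sign of $(y+z)/2$ and whether $|x|\ge|y+z|/2$. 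Using $|x|\,|y-z|\ge$ its value at the midpoint on the favourable side, I aim to bound the contribution of $z$ on the same ray as $y$ (and with $\eta_\pm$ evaluated on that ray) by
$$
\sqrt{\pi}\,\min\{\sqrt T,\sqrt t\}\,\e^{-|T-t|}\cdot(\text{Jacobian }2\sqrt{\pi t})^{-1}\text{-compatible factor}+(\text{remainder}).
$$
Since $\d z=\d\mu(t)$, the principal term should be exactly $k_T(t)\,\d\mu(t)$ with $k_T$ from \eqref{unimodal}. The $\min$ arises because $|x|\approx|y+z|/2\ge\min\{|y|,|z|\}$ and the Gaussian mass contributes a factor $1/\sqrt2$. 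The remaining pieces, including the opposite ray, where $|y-z|\ge|z|$ gives a decreasing dependence on $t$, form a nonnegative remainder $h(t)$ that I arrange to be decreasing in $t$.

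\emph{Row mass.} For \eqref{row-mass} I compute $m(T)=\int K_{\rm cell}(y,z)\d z$ by integrating in $z$ first. The $z$-integral of $\e^{-\pi(z-x)^2-2\pi|x||y-z|}$ is explicit in terms of the Gaussian and error functions, which gives $m$ as a one-dimensional integral in $x$. The upper bound $m<1$ should follow from comparing with $\AW(1)=I$ (the symbol $\phi_0<1$). For the lower bound $m\ge\frac12$, I would try the identity $R_{\rm cell}+\cF R_{\rm cell}\cF^{-1}=I$ together with an evenness or monotonicity argument in $T$ (the value at $T=0$ being the minimum). Failing that, I would bound the explicit integral directly.

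\emph{Applying the profile lemma.} By Lemma~\ref{straightening}, the functions $\eta_\pm$ satisfy \eqref{cell-assumption} with $\lambda=2\pi\kappa$. Applying Lemma~\ref{lem-profile}(ii) to the principal term gives
$$
\eps\!\int k_T\d\mu+\eps(1-\eps)\min\{\sqrt{2T\lambda},\sqrt2\lambda\}.
$$
Applying Lemma~\ref{lem-profile}(i) to the remainder gives
$$
\eps\!\int h\d\mu+\eps(1-\eps)\sqrt{2\lambda/\pi}\,h(0).
$$
Adding the two, the $\eps$-terms recombine to $\eps m(T)$. It remains to bound the error terms by $2(1-m)+5\lambda m$ when $\lambda\le1$, and by $3\lambda$ when $\lambda\ge1$. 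For this I would use the same explicit formula: the mass lost by the principal approximation is comparable to $1-m(T)$, and $h(0)\lesssim$ that lost mass, so the remainder contributes at most $2(1-m)$ after the trivial bound $\int\eta h\le\int h$ where convenient. The principal error is at most $\sqrt2\lambda\le 5\lambda m$, using $m\ge\frac12$.

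\emph{Main obstacle.} I expect the hardest step to be the first one: producing a decomposition in which the principal part is bounded by exactly $k_T$ (with constant $\sqrt\pi$) and the remainder is genuinely monotone in $t$, while its total mass is tied quantitatively to $1-m(T)$. I would first try the sign split of $x$ combined with the inequality $|x|\,|y-z|\ge\bigl||x|-\tfrac{|y+z|}2\bigr|\,|y-z|$, and then check numerically that the resulting constants fit within $2$, $5$ and $3$.
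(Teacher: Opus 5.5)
Your architecture (quadratic coordinate, principal part $k_T$, decreasing remainder, Lemma~\ref{lem-profile} on each ray, bookkeeping through $1-m(T)$ and $m\ge\frac12$) matches the paper's. However, the step you flag as the main obstacle is the real content of the proposition, and the route you propose for it does not work.

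Completing the square at the midpoint puts the Gaussian in the wrong place. For $y\ge z\ge0$ one has exactly
\[
\pi(y-x)^2+\pi(z-x)^2+2\pi|x|(y-z)=
\begin{cases}
\pi(y^2-z^2)+2\pi(x-z)^2, & x\ge0,\\
\pi(z^2-y^2)+2\pi(x-y)^2, & x<0.
\end{cases}
\]
So on the ray of $y,z$ the integrand is centred at $x=\min\{y,z\}\le\frac{y+z}2$. That is the side where your inequality $|x|\,|y-z|\ge\frac{|y+z|}2|y-z|$ fails, and no factor $\e^{-\frac\pi2(y-z)^2}$ is left over. The $\min$ comes from this recentring, not from $|x|\approx\frac{|y+z|}2\ge\min\{y,z\}$, which in any case points the wrong way for an upper bound.

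Moreover, a decomposition that is only an inequality $K_{\rm cell}\le k_T+h$ breaks your claim that the $\eps$-terms recombine to $\eps m(T)$. You pick up an extra $\eps\int(k_T+h-K_{\rm cell})$. For small $\lambda$ the only room in \eqref{equ-row-small} is about $2(1-m(T))\approx\frac1{2T}$ when $T$ is large.

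What is needed is the exact formula. Split the $x$-integral by the sign of $x$ and use $\int_0^\infty x\e^{-2\pi(x-z)^2}\d x=\frac z{\sqrt2}+\int_0^\infty v\e^{-2\pi(v+z)^2}\d v$. This gives, for $y,z\ge0$,
\[
K_{\rm cell}(y,z)=\pi\min\{y,z\}\e^{-\pi|y^2-z^2|}+\e^{-\pi y^2}b(z)+\e^{-\pi z^2}b(y),\qquad K_{\rm cell}(y,-z)=\e^{-\pi y^2}b(z)+\e^{-\pi z^2}b(y),
\]
where $b(u)=2^{-1/2}\e^{-\pi u^2}\int_0^\infty v\e^{-v^2-2\sqrt{2\pi}\,uv}\d v$. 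The first term is exactly $k_T\d\mu$. The remainder $h_T(t)=\e^{-T}b(\sqrt{t/\pi})+\e^{-t}b(\sqrt{T/\pi})$ is visibly nonnegative and decreasing, and it appears on both rays. Hence $\int_0^\infty k_T\d\mu+2\int_0^\infty h_T\d\mu=m(T)$ exactly. Lemma~\ref{lem-profile}(i) is used twice, so the total error is $a_T+2\sqrt{2\lambda/\pi}\,h_T(0)$ with $a_T\le\min\{\sqrt{2T\lambda},\sqrt2\lambda\}$.

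The error bookkeeping also needs repair. You may not use $\int\eta h\d\mu\le\int h\d\mu$ anywhere. It leaves $(1-\eps)\int h\d\mu$ with no factor $\eps$, which cannot fit under $\eps(1-\eps)(\cdots)$ as $\eps\downarrow0$.

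Your termwise allocation (remainder $\le2(1-m)$, principal $\le5\lambda m$) also fails. At $T=0$, $\lambda=1$ one has $h_0(0)=2b(0)=2^{-1/2}$ and $m(0)=\frac12$, so the remainder error is $2/\sqrt\pi>1=2(1-m(0))$.

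What closes the argument are two explicit bounds. First, $h_T(0)\le\e^{-T}/\sqrt2$. Second, $1-m(T)\ge\e^{-T}/2$, which follows from $m(T)=\frac12+\frac12\int_0^T\e^{-r}(1+r/T)^{-1/2}\d r$, obtained by integrating the exact formulas. Then AM--GM gives
\[
2\sqrt{2\lambda/\pi}\,h_T(0)\le(1-m)+\frac{2h_T(0)^2}{\pi(1-m)}\lambda\le(1-m)+\frac2\pi\lambda ,
\]
and $\sqrt2+\frac2\pi<\frac52\le5m$ yields \eqref{equ-row-small}. For $\lambda\ge1$ one uses $h_T(0)\le2^{-1/2}$ and $\sqrt2+\frac2{\sqrt\pi}<3$.

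The same formula for $m(T)$ gives $m<1$. This does not follow from $0\le R_{\rm cell}\le I$: an operator inequality says nothing pointwise about $R_{\rm cell}\mathbf 1$, and $\mathbf 1\notin L^2$. Your idea for the lower bound $m\ge\frac12$ does work. Formally $1-m(y)=\int K_{\rm cell}(u,0)\e^{-2\pi\i uy}\d u\le\int K_{\rm cell}(u,0)\d u=m(0)$, and setting $y=0$ forces $m(0)=\frac12$.
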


    \begin{proof}
    By reflection it suffices to consider $y\ge0$. Set $T=\pi y^2$ and define
   $$
    b(u) = \frac{\e^{-\pi u^2}}{\sqrt2} \int_0^\infty v\e^{-v^2-2\sqrt{2\pi}\,uv}\d v.
$$
    A direct calculation from \eqref{Kcell} gives, for $y,z\ge0$,
    \begin{align}
    K_{\rm cell}(y,z) &= \pi\min\{y,z\}\e^{-\pi|y^2-z^2|} + \e^{-\pi y^2}b(z) + \e^{-\pi z^2}b(y), \label{K-same}\\
    K_{\rm cell}(y,-z) &= \e^{-\pi y^2}b(z) + \e^{-\pi z^2}b(y). \label{K-opposite}
    \end{align}

Under $t=\pi z^2$, the first term in \eqref{K-same} becomes the unimodal profile $k_T$ from \eqref{unimodal}, while the remaining terms form the decreasing profile
$$
h_T(t) = \e^{-T}b\!\left(\sqrt{\frac t\pi}\right) + \e^{-t}b\!\left(\sqrt{\frac T\pi}\right).
$$
Writing $c=h_T(0)$, Lemma~\ref{lem-profile}, applied on the two rays, yields
\begin{equation}\label{equ-row-4}
\int_EK_{\rm cell}(y,z)\d z \le \eps m(T) + \eps(1-\eps) \left( a_T   + 2\sqrt{\frac{2\lambda}{\pi}}\,c \right),
\end{equation}
where
$$
a_T \le
\begin{cases}
\sqrt{2T\lambda},&T\le\lambda,\\[1mm]
\sqrt2\,\lambda,&T\ge\lambda.
\end{cases}
$$
In either case,
\begin{equation}\label{row-error}
a_T + 2\sqrt{\frac{2\lambda}{\pi}}\,c \le \sqrt2\,\lambda + 2\sqrt{\frac2\pi}\,c\sqrt\lambda.
\end{equation}

 Direct integration of \eqref{K-same}--\eqref{K-opposite} gives
 \begin{equation}\label{mT}
 m(T) = \frac12 +  \frac12\int_0^T \frac{\e^{-r}}{\sqrt{1+r/T}}\d r,
\end{equation}
 with $m(0)=1/2$, proving \eqref{row-mass}. Put
 $$
 d=1-m(T).
 $$
 The same calculation gives
 $$
 d \ge \frac{\e^{-T}}2, \qquad c \le \frac{\e^{-T}}{\sqrt2},
 $$
 and hence
\begin{equation}\label{equ-row-6}
 \frac{c^2}{d}\le\e^{-T}\le1.
 \end{equation}

Suppose first that $0<\lambda\le1$. By AM--GM and \eqref{equ-row-6},
$$
2\sqrt{\frac2\pi}\,c\sqrt\lambda \le d+\frac{2c^2}{\pi d}\lambda \le d+\frac2\pi\lambda.
$$
Together with \eqref{row-error},
$$
a_T + 2\sqrt{\frac{2\lambda}{\pi}}\,c < d+\frac52\lambda < 2d+5\lambda m(T),
$$
where we used $m(T)\ge1/2$. Substitution into \eqref{equ-row-4} proves \eqref{equ-row-small}.

If $\lambda\ge1$, then $c\le1/\sqrt2$ and $\sqrt\lambda\le\lambda$, so \eqref{row-error} gives
   $$
 a_T + 2\sqrt{\frac{2\lambda}{\pi}}\,c \le \left(\sqrt2+\frac2{\sqrt\pi}\right)\lambda < 3\lambda.
$$
   Substituting again into \eqref{equ-row-4} proves \eqref{row-large}.
   \end{proof}

\subsection{Proof of Theorem \ref{lowerbound9-10}}

The row estimates above provide the two compression bounds used in the projection argument.

\begin{lemma}	\label{lem-angle}
Let $P,Q$ be orthogonal projections and suppose
$$
\norm{PQ}\le\theta<1.
$$
    Then
$$
\norm f \le \frac1{1-\theta} \left( \norm{(I-P)f} + \norm{(I-Q)f} \right).
$$
\end{lemma}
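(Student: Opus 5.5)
The plan is to decompose $f$ along $Q$ and exploit the smallness of $PQ$ on the range of $Q$. Write
$$
f=Qf+(I-Q)f.
$$
Then $\norm f\le\norm{Qf}+\norm{(I-Q)f}$, so it suffices to bound $\norm{Qf}$ by the two defect terms together with $\theta\norm f$.

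For the first step, split $Qf$ along $P$:
$$
Qf=PQf+(I-P)Qf.
$$
The first piece satisfies $\norm{PQf}\le\theta\norm f$ directly from the hypothesis $\norm{PQ}\le\theta$.

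For the second piece, rewrite $(I-P)Qf$ as
$$
(I-P)Qf=(I-P)f-(I-P)(I-Q)f,
$$
so that, since $I-P$ is a contraction,
$$
\norm{(I-P)Qf}\le\norm{(I-P)f}+\norm{(I-Q)f}.
$$

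Assembling these bounds gives
$$
\norm f\le\theta\norm f+\norm{(I-P)f}+2\norm{(I-Q)f}.
$$
This carries a factor $2$ in front of $\norm{(I-Q)f}$, which is not quite the claimed constant, so the bookkeeping needs tightening.

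The improvement comes from using $(I-Q)Qf=0$, so that $f=Qf+(I-Q)f$ is an orthogonal sum. Write
$$
f=Pf+(I-P)f,\qquad Pf=PQf+P(I-Q)f.
$$
The norm is then controlled as
$$
\norm f\le\norm{PQf}+\norm{P(I-Q)f}+\norm{(I-P)f}\le\theta\norm{Qf}+\norm{(I-Q)f}+\norm{(I-P)f}.
$$
Using $\norm{Qf}\le\norm f$ and rearranging yields
$$
(1-\theta)\norm f\le\norm{(I-P)f}+\norm{(I-Q)f},
$$
which is exactly the claimed estimate.

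The only delicate point is this choice of splitting: first decompose along $P$, then insert $Q$ inside $Pf$. This ordering avoids the spurious factor $2$, and the rest is the triangle inequality together with the contractivity of orthogonal projections.
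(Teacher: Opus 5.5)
Your final argument is correct and is essentially the paper's proof: split $f=Pf+(I-P)f$, write $Pf=PQf+P(I-Q)f$, bound $\norm{PQf}\le\theta\norm f$ and $\norm{P(I-Q)f}\le\norm{(I-Q)f}$, then rearrange. The preliminary factor-$2$ attempt and the remark that $Qf+(I-Q)f$ is an orthogonal sum play no role in the final estimate, so you can drop them.
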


  \begin{proof}
  Since
  $$
  Pf=PQf+P(I-Q)f,
$$
  we have
  $$
  \norm{Pf} \le \theta\norm f+\norm{(I-Q)f}.
  $$
 Combine this with
  \begin{equation*}
  \norm f \le \norm{(I-P)f}+\norm{Pf}. \qedhere
  \end{equation*}
  \end{proof}

  \begin{lemma}\label{positive-angle}
Let $P,Q$ be orthogonal projections, let $0\le R\le I$, and let $a,b\in[0,1]$ satisfy $a+b\le1$. If
$$
PRP\le aP, \qquad Q(I-R)Q\le bQ,
$$
then
$$
\norm{QP} \le \sqrt{a(1-b)}+\sqrt{b(1-a)}.
  $$
In particular, if $a=b=s<1/2$, then
$$
   \norm{QP} \le 2\sqrt{s(1-s)} <1.
$$
\end{lemma}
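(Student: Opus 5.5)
The plan is to reduce to a two-dimensional angle computation by testing $QP$ on unit vectors. Fix unit vectors $f\in\Ran P$ and $g\in\Ran Q$; since $\norm{QP}=\sup|\ip{f}{g}|$ over such pairs, it suffices to bound $|\ip fg|$. Put $x=\ip{Rf}{f}$ and $y=\ip{(I-R)g}{g}$. The hypotheses give $x\le a$ and $y\le b$. Because $0\le R\le I$, both $R^{1/2}$ and $(I-R)^{1/2}$ are contractions with $R+(I-R)=I$. This lets us split the inner product as
$$
\ip fg=\ip{R^{1/2}f}{R^{1/2}g}+\ip{(I-R)^{1/2}f}{(I-R)^{1/2}g}.
$$
Cauchy--Schwarz applied to each term then gives
$$
|\ip fg|\le \sqrt{x}\sqrt{\ip{Rg}{g}}+\sqrt{\ip{(I-R)f}{f}}\sqrt{y}=\sqrt{x(1-y)}+\sqrt{y(1-x)},
$$
where we used $\norm f=\norm g=1$, so that $\ip{Rg}{g}=1-y$ and $\ip{(I-R)f}{f}=1-x$.

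It remains to maximize $G(x,y)=\sqrt{x(1-y)}+\sqrt{y(1-x)}$ over the box $0\le x\le a$, $0\le y\le b$. The key step, and the main point to verify, is that $G$ is nondecreasing in each variable on the region $x+y\le1$. To see this, write $x=\sin^2\theta$ and $y=\sin^2\varphi$ with $\theta,\varphi\in[0,\pi/2]$. Then
$$
G=\sin\theta\cos\varphi+\sin\varphi\cos\theta=\sin(\theta+\varphi).
$$
The condition $x+y\le1$ is equivalent to $\sin^2\theta\le\cos^2\varphi$, that is, $\theta+\varphi\le\pi/2$. On the range $[0,\pi/2]$ the function $\sin$ is increasing, so $G$ increases in both $\theta$ and $\varphi$, hence in both $x$ and $y$. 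Since $a+b\le1$, every point of the box satisfies $x+y\le a+b\le 1$. The maximum is therefore attained at $(a,b)$, giving $|\ip fg|\le\sqrt{a(1-b)}+\sqrt{b(1-a)}$, and taking the supremum over $f,g$ proves the bound on $\norm{QP}$.

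For the special case $a=b=s<1/2$, the formula becomes $2\sqrt{s(1-s)}$. This is strictly less than $1$ because $4s(1-s)<1$ exactly when $(1-2s)^2>0$. No step here is a serious obstacle; the only thing requiring care is the monotonicity reduction, which the trigonometric substitution handles cleanly.
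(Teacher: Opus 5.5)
Your proposal is correct and follows essentially the same route as the paper's proof: you test $\ip{f}{g}$ on unit vectors $f\in\Ran P$, $g\in\Ran Q$, split with $I=R+(I-R)$, and apply Cauchy--Schwarz to get $\sqrt{x(1-y)}+\sqrt{y(1-x)}$, then use monotonicity in each variable on $x+y\le1$. The only difference is that you verify this monotonicity explicitly via $x=\sin^2\theta$, $y=\sin^2\varphi$, $G=\sin(\theta+\varphi)$, a step the paper merely asserts.
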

\begin{proof}
Let $p\in\Ran P$ and $q\in\Ran Q$ be unit vectors, and write $u=\ip{Rp}{p}\le a$ and $v=\ip{(I-R)q}{q}\le  b$. By the identity $I=R+(I-R)$ and the Cauchy--Schwarz inequality, we have
$$
|\ip pq| \le \sqrt{u(1-v)}+\sqrt{v(1-u)} \le \sqrt{a(1-b)}+\sqrt{b(1-a)}.
$$
In the last step, we used the fact that the middle expression is increasing in each variable on
$u,v\ge0$, $u+v\le1$. Taking the supremum over all such $p,q$ proves the result.
\end{proof}

\begin{proof}[Proof of Theorem~\ref{lowerbound9-10}]
Fix $0<\kappa\le1$ and let $E,F$ be $(\eps_*,\kappa)$-Wolff thin, where
$$
   \lambda=2\pi\kappa, \qquad C_0=\frac{32}{\pi}, \qquad \eps_* = \frac1{2+C_0\lambda} = \frac1{2(1+32\kappa)}.
$$
 Set
 $$
\tau=\eps_*(1-\eps_*), \qquad \gamma(\lambda) = \frac{\frac12-\eps_*}{\tau}.
$$
A direct calculation gives
\begin{equation}\label{equ-constant-1}
\frac{\gamma(\lambda)}{\lambda} = \frac{C_0}{2} \left( 1+\frac1{1+C_0\lambda} \right) > \frac{16}{\pi} > 5.
\end{equation}

For $y\in\R$, put $T=\pi y^2$. By Proposition~\ref{row-estimate9-10},
\begin{equation}\label{m-range}
\frac12\le m(T)<1.
\end{equation}

  Suppose first that $0<\lambda\le1$. By \eqref{equ-row-small},
    $$
\int_EK_{\rm cell}(y,z)\d z < \Phi_\lambda(m(T)),
    $$
    where
$$
    \Phi_\lambda(m) = \eps_*m + \tau\bigl(2(1-m)+5\lambda m\bigr).
    $$
    Since $\Phi_\lambda$ is affine and $m(T)\in[1/2,1)$, it is enough to check the endpoints. At $m=1/2$,
    $$
    1-2\Phi_\lambda(1/2) = \frac{ \lambda(C_0-5)(1+C_0\lambda) }{ (2+C_0\lambda)^2 } >0,
$$
    while
$$
    \Phi_\lambda(1) = \eps_*+5\lambda\tau  < \frac12
    $$
 by \eqref{equ-constant-1}. Hence
    \begin{equation}\label{small-gap}
    \int_EK_{\rm cell}(y,z)\d z < A_<(\lambda) := \max\{\Phi_\lambda(1/2),\Phi_\lambda(1)\}  < \frac12.
    \end{equation}

 If $1\le\lambda\le2\pi$, then \eqref{row-large} and $m(T)<1$ give
\begin{equation}\label{equ-gap-large}
\int_EK_{\rm cell}(y,z)\d z < A_>(\lambda) := \eps_*+3\lambda\tau < \frac12,
\end{equation}
since $3\lambda<\gamma(\lambda)$ by \eqref{equ-constant-1}.

Define
$$
A(\kappa) =
\begin{cases}
 A_<(2\pi\kappa),&2\pi\kappa\le1,\\
A_>(2\pi\kappa),&2\pi\kappa >1.
 \end{cases}
$$
Then $A(\kappa)<1/2$, and \eqref{small-gap}--\eqref{equ-gap-large} hold uniformly in $y$. Since $K_{\rm cell}$ is nonnegative and symmetric, Schur's test yields
\begin{equation}\label{PRP}
P_ER_{\rm cell}P_E \le A(\kappa)P_E.
\end{equation}
Applying the same estimate to $F$ and using
$$
\mathcal F R_{\rm cell}\mathcal F^{-1} = I-R_{\rm cell},
$$
we obtain
\begin{equation}\label{QRQ}
Q_F(I-R_{\rm cell})Q_F \le A(\kappa)Q_F.
\end{equation}

 Lemma~\ref{positive-angle}, applied to \eqref{PRP}--\eqref{QRQ}, gives
$$
 \norm{P_EQ_F}_{2\to2} \le 2\sqrt{A(\kappa)(1-A(\kappa))} <1.
 $$
 Hence Lemma~\ref{lem-angle} gives, for every $f\in L^2(\R)$,
 $$
 \norm f_2 \le \frac{ \norm{\one_{E^c}f}_2 + \norm{\one_{F^c}\widehat f}_2 }{ 1-2\sqrt{A(\kappa)(1-A(\kappa))} }.
  $$
 Therefore
 $$
 \eps_c(\kappa) \ge \frac1{2(1+32\kappa)}, \qquad 0<\kappa\le1.
 $$

Together with Theorem~\ref{thm-main},
$$
\frac1{2(1+32\kappa)} \le \eps_c(\kappa) \le \frac12.
$$
In particular, $\eps_c(1)\ge\frac1{66}$, and letting $\kappa\downarrow0$ gives
\begin{equation*}
\lim_{\kappa\downarrow0}\eps_c(\kappa)=\frac12. \qedhere
\end{equation*}
\end{proof}

  \section*{Acknowledgements}
  M. Wang was partially supported by the National Natural Science Foundation of China (No.~12571260), the Natural Science Foundation of Hunan Province
  (No.~2026JJ20012) and the Hunan Basic Science Research Center for Mathematical Analysis (2024JC2002).

\section*{Declaration of AI use}

During the preparation of this work, the authors used OpenAI's GPT-5.6 Sol as an interactive discussion tool and to generate the entire Lean formalization, with both uses continuously guided by the authors throughout the research process. All content was thoroughly rewritten, reviewed, and verified by the authors, who take full responsibility for the final manuscript.

  \end{document}